\numberwithin{equation}{section}
\newtheorem{theorem}{Theorem}[section]
\newtheorem{lemma}[theorem]{Lemma}
\newtheorem{proposition}[theorem]{Proposition}
\newtheorem{corollary}[theorem]{Corollary}
\theoremstyle{definition}
\newtheorem{definition}[theorem]{Definition}
\newtheorem{examples}[theorem]{Exemples}
\newtheorem{remark}[theorem]{Remark}
\theoremstyle{plain}
\def\KK{\ensuremath\mathbb{K}}
\def\CC{\ensuremath{\mathbb{C}}}
\def\RR{\ensuremath{\mathbb{R}}}
\def\ZZ{\ensuremath{\mathbb{Z}}}
\date{}
\begin{document}
\pagestyle{plain}
	\title{Adiabatic groupoid and  secondary invariants in K-theory}
	
	\author
	{Vito Felice Zenobi}
	\maketitle

		\begin{abstract}
		In this paper we define new K-theoretic secondary invariants attached to a Lie groupoid $G$.
		The receptacle for these invariants is the K-theory of $C^*_r(G_{ad}^\circ)$ (where $G_{ad}^\circ$ is the adiabatic deformation $G$ restricted to  the interval $[0,1)$). Our construction directly generalises  the cases treated  in \cite{PS,PS2}, in the setting of the Coarse Geometry, to more involved geometrical situations, such as foliations.
		Moreover we tackle the problem of producing a wrong-way functoriality between adiabatic deformation groupoid K-groups associated to transverse maps. This extends the construction of the lower shriek map in \cite{CS}.
		Furthermore we prove a Lie groupoid version of  the Delocalized APS Index Theorem of Piazza and Schick.
		Finally we give a product formula for secondary invariants.
		
		\end{abstract}

Mathematical Subjects: 22A22, 	46L80, 	19K56
\section*{Introduction}
Higher secondary invariants have recently been the subject of a significant number of papers: they were introduced by Higson and Roe in the seminal works \cite{HigRoeI,HigRoeII,HigRoeIII} about mapping surgery to analysis; then they were treated in a more index-theoretic way by Piazza and Schick \cite{PS, PS2} with applications to the Stolz' positive scalar curvature sequence and the surgery exact sequence for smooth manifolds; the author of this paper extended this last construction to the topological setting in \cite{zenobi}; all these works use as principal tool the Coarse C*-algebras introduced by Roe. In \cite{XY, XY2,WXY} these subjects are treated using the localization C*-algebras introduced by Yu. Other interesting works in the same area are  \cite{BR, benameur-roy-ell2,DG1, DG2, DG3,zeidler}.
The approach to secondary invariants in the present work   is given through Lie groupoids.

 Let us start by setting a general framework, see also \cite[Section 3]{Schick}.
We begin by considering an exact sequence of C*-algebras of the following type 
\begin{equation}\label{eq1intro}\xymatrix{0\ar[r]& S\otimes B\ar[r]& E\ar[r]& A\ar[r]& 0}\end{equation}
where $S:= C_0(0,1)$.
We can investigate  the following hierarchy of K-theory classes:
\begin{itemize}
	\item a fundamental class $[D]\in K_*(A)$;
	\item a primary invariant given by the index class $\partial[D]\in K_{*+1}(S\otimes B)$,
where
	$\partial\colon K_*(A)\to K_{*+1}(S\otimes B)$ is the boundary map for the
	long exact sequence in K-theory;
	
	\item assume that the primary invariant is the zero class  and that 
	we know the ``reason'' $w$ why it is zero.   Then we can use that reason to establish a rule for constructing a canonical lift of  $[D]$ in $K_*(E)$, that
	we are going to call a secondary invariant and that we will denote by $\varrho(D,w)$.
	
\end{itemize}

Now let us assume that the exact sequence \eqref{eq1intro}
		has a completely positive section. This implies that $\partial$ is an element in $KK(A,B)$  and one can prove that
		there exists a C*-algebra $A'$ and two morphisms
	\begin{itemize}
		\item $\psi\colon A'\to A$ which induces a KK-equivalence;
		\item$\varphi\colon A'\to B$ which induces the boundary map for the following exact sequence
	\end{itemize} 
	\begin{equation}\label{eq2intro}\xymatrix{0\ar[r]& S\otimes B\ar[r]& \mathcal{C}_\varphi(A',B)\ar[r]& A'\ar[r]& 0}.\end{equation}
	Notice that the long exact sequence in K-theory associated to \eqref{eq2intro} is isomorphic to the one induced by \eqref{eq1intro}.
Here $\mathcal{C}_\varphi(A',B)=\{a\oplus f\in A'\oplus B[0,1)\,|\, f(0)=\varphi(a)\}$ is the mapping cone C*-algebra associated to $\varphi$.
In this context a secondary invariant is a class in $K_0(\mathcal{C}_\varphi(A',B))$  and it is explicitly  represented by
		\begin{itemize}
	\item a projection $p$  over $A'$ defining the fundamental class $[D]$.
		\item a path $q_t$ of projections from $\varphi(p)$  to a degenerated projection over $B$, that concretely gives 
			the reason $w$ why the primary invariant  is zero.
		\end{itemize}		
On the other hand a secondary invariant, as a class in $K_1(\mathcal{C}_\varphi(A',B))$,   is explicitly  represented by
\begin{itemize}
	\item a unitary $u$  over $A'$ defining the fundamental class $[D]$.
	\item a path $v_t$ of unitaries from $\varphi(u)$  to the identity over $B$.
\end{itemize} 
\subsubsection*{The tangent groupoid and $\varrho$-classes}
Let us make all that more concrete in a simple geometric context.
Let $X$ be a closed smooth manifold.
Then consider the pair groupoid $X\times X\rightrightarrows X$. Its smooth convolution algebra $C_c^\infty(X\times X, \Omega^{\frac{1}{2}}(\ker dr\oplus\ker ds))$ of the smooth compactly supported half-densities on $X\times X$ is the *-algebra of the smoothing operators on $L^2(X)$ and $C^*_r(X\times X)$, its reduced C*-algebra, is isomorphic to the algebra of compact operators $\mathcal{K}(L^2(X))$.

The Lie algebroid of $X\times X$ is given by the tangent bundle $TX$, it is a Lie groupoid and, by means of the Fourier transform,  its groupoid
C*-algebra $C^*_r(TX)$ is isomorphic to $C_0(T^*X)$ (notice that  0-order symbols on $X$ are bounded multipliers of this algebra).
By Poincaré duality, see \cite{CS}, we know that  $K_*(C_0(T^*X))$ is isomorphic to $KK_*(C(X),\CC)$, the K-homology of $X$.

So, following the abstract construction given in the previous subsection, we have that $K_*(C^*_r(TX))$ is the receptacle of the fundamental classes and  the analytical index $$\mathrm{Ind}\colon K_*(C_0(T^*X))\to K_*(\KK(L^2(X)))\cong\ZZ$$ gives the primary invariants.
But we would like to have a realization of $\mathrm{Ind}$ as an element of KK-theory or, better, as the boundary map of a semi-split exact sequence as in \eqref{eq2intro}.

Indeed the  following construction of  Connes gives the solution to this problem:
the tangent groupoid of the smooth manifold $X$ is defined as follows
\[
\mathbb{T}X:=TX\times\{0\}\sqcup X\times X\times(0,1]\rightrightarrows X\times[0,1],
\]
 equipped with a suitable smooth structure. It is a deformation groupoid, whose restriction at $0$ is $TX$ and whose restriction at $1$ is $X\times X$.
One can prove that:
\begin{itemize}
	\item the evaluation at $0$, $\mathrm{ev}_0\colon C^*_r(\mathbb{T}X)\to C^*_r(TX)$, induces a KK-equivalence since its kernel is a cone and then K-contractible. If $\sigma$ is an elliptic symbol of order $0$ on $X$, then the symbol $\sigma\times id_{[0,1]}$  on $T^*X\times[0,1]$, the dual Lie algebroid of $\mathbb{T}X$, gives an elliptic pseudodifferential operator on $\mathbb{T}X$, in the sense of \cite{vas}, whose restriction at $1$ is the pseudodifferential operator on $X$ associated to $\sigma$ and whose restriction at $0$ is the Fourier transform of $\sigma$;
	\item the KK-element $\mathrm{Ind}_X:=[\mathrm{ev}_0]^{-1}\otimes_{C^*_r(\mathbb{T}X)}[\mathrm{ev}_1]\in KK(C^*_r(TX),C^*_r(X\times X))$ gives the analytical index $\mathrm{Ind}$, where $\mathrm{ev}_1\colon C^*_r(\mathbb{T}X)\to C^*_r(X\times X)$ is the evaluation at 1, see \cite{MP} for a proof of this fact.
 \end{itemize}
Now let us point out that $\mathcal{C}_{\mathrm{ev}_1}(C^*_r(\mathbb{T}X), C^*_r(X\times X))$, the mapping cone C*-algebra of the evaluation at 1, is isomorphic to $C^*_r(\mathbb{T}^\circ X)$, where $\mathbb{T}^{\circ}X$ is the restriction of $\mathbb{T}X$ to the open interval $[0,1)$.

So we have that the analytical index is the boundary morphism of the long exact sequence of K-groups associated to the exact sequence
\begin{equation}
\label{eq3intro}\xymatrix{0\ar[r]& C^*_r(X\times X)\otimes C_0(0,1)\ar[r]& C^*_r(\mathbb{T}^\circ X)\ar[r]^{\mathrm{ev}_0}& C^*_r(TX)\ar[r]& 0}.
\end{equation}

If $\sigma$ is the principal symbol of an elliptic operator $P$ on $X$ and $P_t$ is a path of elliptic operators on $X$ such that $P_0=P$ and $P_1$ is invertible, then the analytical index of $P$ vanishes. There is a $\varrho$-invariant associated to this situation that we will denote by $\varrho(\sigma, P_t)\in K_*(C^*_r(\mathbb{T}^\circ X))$, see Section \ref{section-rho} for a detailed construction.

We have two typical geometric situations where the analytical index  vanishes.
\begin{itemize}
	\item Let $X= N\sqcup -M$ be the disjoint union of two compact oriented smooth manifolds, that are oriented homotopy equivalent through $f\colon N\to M$, and let $D^{sign}$ be the signature operator of $X$. In \cite{HilSk} the authors proved that there exists a canonical way to produce  a path of operators $D_t$  from $D^{sign}$ to an invertible operator $D_1$; all that (up to passing from the language of K-theory to the language of KK-theory and from the unbounded case to the bounded one) gives a fundamental class, the symbol of the signature operator, and a path to a degenerate cycle, that is a reason why the analytical index of the signature vanishes. As we saw before, all that gives a class $\varrho(f)=(\sigma(D^{sign}),D_t)$ in the  K-theory of $C^*_r(\mathbb{T}^\circ X)$.
	\item Let $X$ be a spin smooth compact manifold, equipped with a Riemannian metric $g$, such that the scalar curvature is positive everywhere.
	Then the Lichnerowicz formula implies that the Dirac operator $\slashed{D}$ associated to the spinor bundle is invertible and  that its analytical index is zero; so, as for the previous case,  
	the Dirac operator itself (no perturbations are needed here) gives a class $\varrho(g)=\varrho(\sigma(\slashed{D}),\slashed{D})$ in the  K-theory of $C^*_r(\mathbb{T}^\circ X)$.
\end{itemize}

\subsubsection*{Wrong-way functoriality}
Once we have constructed such a secondary invariant, we 
would like to study the functoriality of these objects with respect to smooth maps. In other words we would like to push forward classes from $N$ to $M$, through a smooth map $f\colon N\to M $, at the level of the tangent groupoids.
In \cite{CS} the authors construct a lower shriek map
$df_!\in KK^*(C_0(T^*N),C_0(T^*M))$, associated  to any smooth map $f\colon N\to M$ between compact smooth manifolds.
By Poincar\'{e} duality this homomorphism corresponds to the map $[f]\colon K_*(N)\to K_*(M)$ between the K-homology groups of the manifolds.
Thanks to the Poincaré duality and the naturality of the index we have the following equality of morphisms
$ K_*(C_0(T^*N))\to K_*(C^*_r(M\times M))$
\[
df_!\otimes\mathrm{Ind}_{M}=\mathrm{Ind}_{N}\otimes \mu_f
\]
where $\mu_f$ is the Morita equivalence between $C^*_r(N\times N)$ and $C^*_r(M\times M)$.
The problem here concerns the construction of the dotted arrow in the following diagram
\[
\xymatrix{\cdots\ar[r] & K_*\left(C^*_r(N\times N\times(0,1))\right)\ar[r]\ar[d]^{\mu_{f}\otimes id} &
	K_*\left(C^*_r(\mathbb{T}^\circ N))\right)\ar[r]^{[\mathrm{ev}_0]_*}\ar@{.>}[d]^{\psi_!^{ad}} & K_*\left(C_0(T^*N)\right)\ar[r]\ar[d]^{df_!}& \cdots\\
	\cdots\ar[r] &
	K_*\left(C^*_r(M\times M\times(0,1))\right)\ar[r] &
	K_*\left(C^*_r(\mathbb{T}^\circ M))\right)\ar[r]^{[\mathrm{ev}_0]_*} & K_*\left(C_0(T^*M)\right)\ar[r]& \cdots\\
}\]
 so that all the squares commute. This dotted arrow will be implemented by a suitable deformation groupoid, as we will see in Section \ref{section}.

\subsubsection*{Cobordisms}
A second problem is the following: a natural equivalence relation  among  homotopy equivalences and metrics with positive scalar curvature is given by a certain cobordism equivalence.  
The question is if the $\varrho$-classes are well defined on cobordism classes. 
A positive answer is given by the so-called Delocalized Atiyah-Patodi-Singer Index Theorem, firstly stated and proved by Piazza and Schick in \cite{PS,PS2} in the setting of the coarse geometry and in this paper formalized and generalised to the context of Lie groupoids.
In order to do it we need to use the  $b$-groupoid \[\Gamma(W,\partial W)= \mathring{W}\times\mathring{W}\sqcup \partial W\times\partial W\times \RR \rightrightarrows W\] of a manifold $W$ with boundary $\partial W$, see Section \ref{month}.

Let $P$ be an elliptic pseudodifferential operator such that its restriction to the boundary $P_\partial$ is homotopic to an invertible operator, through a path $P_\partial^t$. This implies that one can perturb $P$ to an opertor that has a Fredholm index in the K-theory of $C^*_r(\mathring{W}\times\mathring{W})$.
The deformation groupoid \[\Gamma(W,\partial W)_{ad}^\mathcal{F}\rightrightarrows \mathring{W}\times[0,1]\sqcup\partial W\times[0,1)\] encodes both the index of the perturbation of $P$
and the $\varrho$-class of the boundary so that, through convenient exact sequences, we can compare these two classes. Indeed one can state, roughly, that
the $\varrho$-class associated to the path $P_\partial^t$ is equal to the image of  the Fredholm index of $P$ into the K-theory of the tangent groupoid of $\mathring{W}$.

Hence if we have a spin Riemannian manifold $W$ that is a cobordism between $\partial_0 W$ and $\partial_1 W$ and if $W$ is equipped with a metric $g$ with positive scalar curvature that restricts to $g_0$ and $g_1$ on the boundary components, then $\varrho(g_0)=\varrho(g_1)$ since the Dirac operator of $W$ is invertible and its Fredholm index vanishes. 

Analogously if $W$ is a smooth cobordism between two manifolds $M_0$ and $M_1$ and if there is a homotopy equivalence $F\colon W\to N\times[0,1]$, such that its restrictions to the boundary components, $f_i\colon M_i\to N\times\{i\}$ for $i=0,1$, are homotopy equivalences, then $\varrho(f_0)=\varrho(f_1)$ since the index of the signature operator of $W\sqcup N\times [0,1]$ vanishes.
\subsubsection*{Products}
A last question concerns product formulas.
Let $g$ be a Riemannian metric with positive scalar curvature on a spin smooth compact manifold $Y$ and let $h$ be any Riemannian metric on a spin smooth compact manifold $V$. We know that, up to  multiplication by a scalar factor $\epsilon$ the metric $h$, $g\oplus h$ is a metric with positive scalar curvature on $Y\times V$.
On the other hand, if $f\colon N\to M$ is a homotopy equivalence between smooth manifolds, then so is  $f\times \mathrm{id}\colon N\times W\to M\times W$ for any smooth manifold $W$.
What is the relation between $\varrho(g)$ and $\varrho(g\times h)$? The same question arise for $\varrho(f)$ and $\varrho(f\times\mathrm{id})$.

Let $Z$ denote $Y$ or $N\sqcup -M$ and let $X$ denote $V$ or $W$.
One can define a product 
\[
\boxtimes\colon K_i(C^*_r(\mathbb{T}^\circ Z))\times K_j(X)\to K_{i+j}(\mathbb{T}^\circ (Z\times X))
\]
such that the following formulas holds:
\[
\varrho(g)\boxtimes[\slashed{D}_h]=\varrho(g\oplus h)
\]
where $[\slashed{D}_h]$ is the K-homology class of the Dirac operator on $(X,h)$;
\[
\varrho(f)\boxtimes[D^{sign}_X]=\varrho(f\times\mathrm{id})
\]
where $[D^{sign}_X]$ is the K-homology class of the signature operator of $X$.

After proving that, one can ask when two different $\varrho$-classes on the same manifold remain distinct after making the product with a second manifold. In this specific case the answer is that if the Fredholm index of the K-homology class on $X$ is non-zero, then the product with this K-homology class is rationally injective.

\subsubsection*{Lie groupoids}

So far we have been concerned with a very simple Lie groupoid on a smooth manifold $X$, namely the pair groupoid.
Let us consider the Poincaré groupoid of $X$, that is $\widetilde{X}\times_\Gamma\widetilde{X}\rightrightarrows X$ where $\Gamma$ is the fundamental group of $X$ and $\widetilde{X}$ is the universal covering of $X$.
Then we recover the results obtained by Piazza and Schick in \cite{PS,PS2}, up to consider groupoid C*-algebras instead of Coarse algebras.

A nice feature of the theory we summarily explained above is that, if we take any Lie groupoid $G$ over 
 $X$, \emph{mutatis mutandis} and with little extra work, all the results hold in that generality.
 Where we used the tangent groupoid, we now employ the adiabatic deformation groupoid $G_{ad}$, see Definition \ref{adgroupoid}.
 The wrong-way functoriality generalizes between the adiabatic deformation of a Lie groupoid and the adiabatic deformation of its pull-back, see Subsection \ref{pb}.
Cobordism relations and product formulas are established in this general context.
The main examples are always given by homotopy equivalences
	and positive scalar curvature, in a suitable groupoid fashion.
	A concrete non-trivial example is given by foliations: can we distinguish cobordism classes of foliations homotopically equivalent to a given one? Can we distinguish cobordism classes of foliated metrics that have longitudinally positive scalar curvature? We will see an example in Section \ref{foliated-bundles}.
	
	This paper is devoted to develop the program explained above and  to tackle the technical issues one meets in generalising it  to the context of a general Lie groupoid.
	
	\subsubsection*{Acknowledgements}
	This paper is based on part of my PhD thesis.
	I am very thankful to Georges Skandalis for his enthusiastic  support and for sharing enlightening ideas with me.
	I would like to thank Paolo Piazza too, for his continuous support and for interesting discussions.
It is also a pleasure to thank the two anonymous referees who read this work extremely carefully and contribute to improve significantly the paper.
	\tableofcontents 
\section{Groupoids}
\subsection{Basics}

We refer the reader to \cite{DL} and the bibliography inside it for the notations and a detailed overview of groupoids and index theory.

\begin{definition} Let $G$ and $G^{(0)}$ be two sets.  A groupoid structure on $G$ over $G^{(0)}$ is given by the following morphisms:
	\begin{itemize}
	
		\item Two maps: $r,s: G\rightarrow G^{(0)}$,
		which are respectively the range and  source map.
			\item A map $u:G^{(0)}\rightarrow G$ called the unit map that is a section for both $s$ and $r$. We can identify $G^{(0)}$ with its image
			in $G$. 
		\item An involution: $ i: G\rightarrow G
		$, $  \gamma  \mapsto \gamma^{-1} $ called the inverse
		map. It satisfies: $s\circ i=r$.
		\item A map $ p: G^{(2)}  \rightarrow  G
		$, $ (\gamma_1,\gamma_2)  \mapsto  \gamma_1\cdot \gamma_2 $
		called the product, where the set 
		$$G^{(2)}:=\{(\gamma_1,\gamma_2)\in G\times G \ \vert \
		s(\gamma_1)=r(\gamma_2)\}$$ is the set of composable pairs. Moreover for $(\gamma_1,\gamma_2)\in
		G^{(2)}$ we have $r(\gamma_1\cdot \gamma_2)=r(\gamma_1)$ and $s(\gamma_1\cdot \gamma_2)=s(\gamma_2)$.
	\end{itemize}
	
	The following properties must be fulfilled:
	\begin{itemize}
		\item The product is associative: for any $\gamma_1,\
		\gamma_2,\ \gamma_3$ in $G$ such that $s(\gamma_1)=r(\gamma_2)$ and
		$s(\gamma_2)=r(\gamma_3)$ the following equality
		holds $$(\gamma_1\cdot \gamma_2)\cdot \gamma_3= \gamma_1\cdot
		(\gamma_2\cdot \gamma_3)\ .$$
		\item For any $\gamma$ in $G$: $r(\gamma)\cdot
		\gamma=\gamma\cdot s(\gamma)=\gamma$ and $\gamma\cdot
		\gamma^{-1}=r(\gamma)$.
	\end{itemize}
	
	We denote a groupoid structure on $G$ over $G^{(0)}$ by
	$G\rightrightarrows G^{(0)}$,  where the arrows stand for the source
	and target maps. 
\end{definition}

We will adopt the following notations: $$G_A:=
s^{-1}(A)\ ,\ G^B=r^{-1}(B) \ \mbox{ and } G_A^B=G_A\cap G^B \,$$
in particular if $x\in G^{(0)}$, the  $s$-fiber (respectively 
$r$-fiber) of $G$ over $x$ is $G_x=s^{-1}(x)$ (respectively $G^x=r^{-1}(x)$).
\begin{definition}
	A subset $X$ of $G^{(0)}$ is called $G$-invariant or saturated if for any element $x\in X$ we have that $r(G_x)$, or equivalently $s(G^x)$, is contained in $X$.
\end{definition}
\begin{definition}
	
	We call $G$ a Lie groupoid when $G$ and $G^{(0)}$ are second-countable smooth manifolds
	with $G^{(0)}$ Hausdorff, the structural homomorphisms are smooth and the range and the source maps are submersions.
\end{definition}

\subsection{Groupoid C*-algebras}

We can associate to a Lie groupoid $G$ the *-algebra $C^\infty_c(G,\Omega^{\frac{1}{2}}(\ker ds\oplus\ker dr))$ of the compactly supported sections of the half-densities bundle associated to $\ker ds\oplus\ker dr$, with:

\begin{itemize}
	\item the involution given by $f^*(\gamma)=\overline{f(\gamma^{-1})}$;
	\item and the convolution product given by $f*g(\gamma)=\int_{G_{s(\gamma)}} f(\gamma\eta^{-1})g(\eta)$.
\end{itemize}  

For all $x\in G^{(0)}$ the algebra $C^\infty_c(G,\Omega^{\frac{1}{2}}(\ker ds\oplus\ker dr))$ can be represented on 
$L^2(G_x,\Omega^{\frac{1}{2}}(G_x))$ by 
\[\lambda_x(f)\xi(\gamma)=\int_{G_{x}} f(\gamma\eta^{-1})\xi(\eta), \]
where $f\in C^\infty_c(G,\Omega^{\frac{1}{2}}(\ker ds\oplus\ker dr))$ and $\xi\in L^2(G_x,\Omega^{\frac{1}{2}}(G_x))$.

\begin{definition}
	The reduced C*-algebra of a Lie groupoid G, denoted by $C^*_r(G)$, is the completion of $C^\infty_c(G,\Omega^{\frac{1}{2}}(\ker ds\oplus\ker dr))$ with respect to the norm
	\[
	||f||_r=\sup_{x\in G^{(0)}}||\lambda_x(f)||.
	\]
	
	The full C*-algebra of $G$ is the completion of 
	$C^\infty_c(G,\Omega^{\frac{1}{2}}(\ker ds\oplus\ker dr))$ with respect to all bounded *-representations.
\end{definition}

\begin{remark}\label{fullvsred}
From now on, if $X$ is a $G$-invariant closed subset of $G^{(0)}$ we will call
$e_X\colon C^\infty_c(G)\to C^\infty_c(G_{|X})$ the restriction map to $X$.
That gives an exact sequence of full groupoid C*-algebras
\[
\xymatrix{0\ar[r]& C^*(G_{|G^{(0)}\setminus X})\ar[r]&C^*(G)\ar[r]& C^*(G_{|X})\ar[r]&0},
\]
but in general, for reduced C*-algebras, we have not exactness in the middle:  
the reader can find examples of this phenomenon in \cite{HLS}.
Let us precise that in what follows we will mainly deal with the reduced groupoid C*-algebras, because there are more details to check in the reduced situation. But everything we are going to  prove about the reduced C*-algebras works for the full C*-algebras, too.
\end{remark}

\begin{remark}\label{multiplication}
	Notice that elements of the algebra $C_b(G^{(0)})$ of the bounded continuous functions on $G^{(0)}$ are multipliers of $C^*_r(G)$. Consider $f\in C(G^{(0)})$, then it acts on the left just as the multiplication operator by the continuous function $r^*f$  on $G$ and on the right as the multiplication operator by the continuous function $s^*f$.
\end{remark}
We refer the reader to \cite{Renault} as a classical reference about groupoid C*-algebras.
\subsection{Lie algebroids and the adiabatic groupoid}\label{adiabaticdef}

\begin{definition} A  Lie
	algebroid $\mathfrak{A} =(p:\mathfrak{A}\rightarrow TM,[\ ,\ ]_{\mathfrak{A}})$ on a smooth
	manifold $M$ is a vector bundle $\mathfrak{A} \rightarrow M$
	equipped with a bracket $[\ ,\ ]_{\mathfrak{A}}:\Gamma(\mathfrak{A})\times \Gamma(\mathfrak{A})
	\rightarrow \Gamma(\mathfrak{A})$ on the module of sections of $\mathfrak{A}$, together
	with a homomorphism of vector bundles $p:\mathfrak{A} \rightarrow TM$ from $\mathfrak{A}$ to the
	tangent bundle $TM$ of $M$, called the  anchor map, fulfilling the following conditions:
	\begin{itemize}
		\item the bracket $[\ ,\ ]_{\mathfrak{A}}$ is $\RR$-bilinear, antisymmetric
		and satisfies the Jacobi identity,
		\item $[X,fY]_{\mathfrak{A}}=f[X,Y]_{\mathfrak{A}}+p(X)(f)Y$ for all $X,\ Y \in
		\Gamma(\mathfrak{A})$ and $f$ a smooth function of $M$, 
		\item $p([X,Y]_{\mathfrak{A}})=[p(X),p(Y)]$ for all
		$X,\ Y \in \Gamma(\mathfrak{A})$. 
	\end{itemize}
\end{definition}

Let $G$ be a Lie groupoid.
The  tangent space to $s$-fibers, that is $T_sG := \ker ds$ restricted to the objects of $G$ is
$\bigcup_{x\in G^{(0)}} TG_x$  and it has the structure of  Lie algebroid
on $G^{(0)}$, with the anchor map given by $dr$.  See for instance \cite{McK}.
It is denoted by 
$\mathfrak{A}(G)$ and we call it the Lie algebroid of $G$.
We can also think of it as the normal bundle of the inclusion $G^{(0)}\hookrightarrow G$. 

Let $M_0$ be a smooth submanifold of a smooth manifold $M$ with normal bundle $\mathcal{N}$.
We give now the definition of the so-called \emph{deformation to the normal cone}:
as a set, the deformation to the normal cone is $$D(M_0,M)=\mathcal{N} \times\{0\}\sqcup M\times(0,1]. $$
In order to define its smooth structure, we fix an exponential map, which is a diffeomorphism $\theta$ from
a neighbourhood $V'$ of the zero section $M_0$ in $\mathcal{N}$ to a neighbourhood $V$ of $M_0$ in $M$.
We may cover $D(M_0,M)$ with two open sets $M\times (0,1]$, with the product structure, and $W=\mathcal{N}\times \{0\}\sqcup V\times(0,1]$,
endowed with the smooth structure for which the map 
\begin{equation}\label{adiabatictopology}
\Psi\colon\{(m,\xi,t)\in \mathcal{N}\times[0,1]\,|\,(m,t\xi)\in V'\}\to W
\end{equation}
given by $(m,\xi,t)\mapsto(\theta(m, t\xi),t)$,
for $t\neq0$, and by $(m,\xi,0)\mapsto(m,\xi,0)$, for $t=0$, is a diffeomorphism. One can verify that the transition map on the overlap of these two open sets is smooth, see for instance \cite{HilSk2}.


\begin{definition}\label{adgroupoid}
The adiabatic groupoid $G_{ad}$ is given by $D(G^{(0)},G)$, the deformation to the normal cone of the unit map. As set it is the  following 
\[
\mathfrak{A}(G)\times\{0\}\cup G\times(0,1]\rightrightarrows G^{(0)}\times [0,1],
\]
with the smooth structure given by the 
construction discussed above.
\end{definition}

\begin{definition}
	We will use the notation $G_{ad}^{\circ}$ for the restriction of the adiabatic groupoid to the interval open at $1$, given by
	\[
	\mathfrak{A}(G)\times\{0\}\cup G\times(0,1)\rightrightarrows G^{(0)}\times [0,1).
	\]
\end{definition}

Then we can associate to a Lie groupoid $G$ a short exact sequence of C*-algebras
	\begin{equation}\label{AESmax}
	\xymatrix{0\ar[r] & C^*(G\times(0,1))\ar[r] & C^*(G_{ad}^\circ)\ar[r]^{\mathrm{ev}_0} & C^*(\mathfrak{A}(G))\ar[r]& 0}
	\end{equation}
	that we call the (full) adiabatic extension of $G$.

Since $\mathfrak{A}(G)$ is an amenable groupoid, $C^*(\mathfrak{A}(G))$ is isomorphic to the reduced groupoid C*-algebra $C^*_r(\mathfrak{A}(G))$.
Thanks to the fact that the map from the full C*-algebra of a groupoid to the reduced one is surjective, one can deduce that
the the following  sequence of reduced groupoid C*-algebras
	\begin{equation}\label{AES}
	\xymatrix{0\ar[r] & C_r^*(G\times(0,1))\ar[r] & C_r^*(G_{ad}^\circ)\ar[r]^{\mathrm{ev}_0} & C_r^*(\mathfrak{A}(G))\ar[r]& 0}
	\end{equation}
is exact.

\subsection{Manifolds with boundary and the Monthubert groupoid}\label{month}

For this section we refer the reader to \cite{Mont} and \cite[3.1]{CLM}.
Let $X$ be a manifold with boundary $\partial X$.
We can think of $X$ as a closed subspace of an open manifold $\hat{X}$.
Let $\rho\colon \hat{X}\to \RR$ be a defining function of the boundary, namely a function that is  zero on $\partial X$  and only there, with nowhere vanishing differential on it.
\begin{definition}\label{b-calculus}
	The $b$-groupoid (or equivalently the Monthubert groupoid) of $X$, denoted by $\Gamma(X,\partial X)$, is given as a set by
	\[
	\{(x,y,\alpha)\in X\times X\times \RR\,|\, \rho(x)=e^\alpha\rho(y)\}.
	\]
	For its smooth structure see \cite[Section 3.1]{Mont}.
\end{definition}

Notice that by \cite[Proposition 3.5]{Mont} the Lie groupoid $\Gamma(X,\partial X)$ is amenable and then $C^*(\Gamma(X,\partial X))=C_r^*(\Gamma(X,\partial X))$.

\begin{definition}\label{bcalculus}
	Let $G\rightrightarrows \hat{X}$  be a Lie groupoid and let $\partial X$ be transverse with respect to $G$, this means that  $T_x\partial X+dr(\mathfrak{A}_x(G))=T_x\hat{X}$.
	Define the $b$-groupoid of $G$  with respect to the pair $(X, \partial X)$, denoted by $G(X,\partial X)$, as the following fibered product
	\begin{equation}\label{fibered}
	\xymatrix{G(X,\partial X)\ar[r]\ar[d] & G\ar[d]^{r\times s}\\
		\Gamma(X,\partial X)\ar[r]^{r\times s} & X\times X}
	\end{equation}
	where $\Gamma(X,\partial X)$ is the  $b$-groupoid of $(X,\partial X)$, defined above.
	Then $G(X,\partial X)\rightrightarrows X$ is a longitudinally smooth groupoid.
	The set of arrows is $G_{|\mathring{X}}\cup G_{|\partial X}\times \RR$.
	See \cite[Section 3]{Mont} for a detailed construction.
\end{definition}
\emph{From now on, when we say that $G$ is a Lie groupoid on a manifold $X$ with boundary $\partial X$, we are implicitly saying that  $G$ is the restriction a Lie groupoid over an open manifold $\hat{X}$ which contains $X$ as a closed subspace.}

\begin{remark}\label{boundaryloc}
If we look the  groupoid $G(X,\partial X)$ near the boundary of $X$, we can give a product structure of it as follows.
Let $\underline{n}\in C^\infty(\partial X,T\hat{X}_{|\partial X})$ be a vector field that is normal to $\partial X$, such that $\langle d\rho, \underline{n}\rangle(x)=1$ for any $x\in \partial X$. Moreover, since $\partial X$ is transverse  with respect to $G$, 
 that is $T_x\partial X+dr(\mathfrak{A}_x(G))=T_x\hat{X}$, one can do all the choices so that $\underline{n}(x)$ belongs to the image of the anchor map of $\mathfrak{A}_xG$ for all $x\in \partial X$.

Then, one can chose $\exp\colon V\to X$ an exponential map, where $V$ is a suitable neighbourhood of the zero section in $T\hat{X}_{| \partial X}$ such that one has the following diffeomorphism
\[
\phi\colon (x,t)\mapsto \exp_x(t\underline{n})
\]
from $\partial X\times(-1,1)$ to a neighbourhood  $U$ of $\partial X$ in $\hat{X}$,  that gives an isomorphism of pair groupoids $\Phi\colon \partial X\times \partial X\times (-1,1)\times(-1,1)\to U\times U$.

Since the boundary is transverse with respect to $G$ we can choose $\underline{n}$ and a locally finite  open cover  $\{\mathcal{U}_i\}_{i\in I}$ of $\partial X$ such that $\underline{n}_{|\mathcal{U}_i}$ lifts to a local section $\xi_i$ of $\mathcal{A}G_{|\partial X}$. Then by means of a partition of the unity subordinated to $\{\mathcal{U}_i\}$ one can obtain a section $\xi$ of the Lie algebroid $\mathcal{A}G_{|\partial X}$.
Let 
$\gamma_{x,t}$ be the path in $G$ equal to $\exp_x(t\xi)$ for $t\in(-1,1)$.
Then the map
\[\Psi\colon (\gamma, t, s )\mapsto \gamma_{r(\gamma),t}\cdot\gamma\cdot\gamma_{s(\gamma), s}\]
is an isomorphism  of  Lie groupoids between $G_{|\partial X}\times(-1,1)\times(-1,1)$ and $G_{|U}$. See \cite[Section 2.1]{D-Sk-stability}.

In particular, since $\Psi$ and $\Phi$ are compatible with respect to the source and the target maps, we obtain the following isomorphism of Lie groupoids
\[
\Gamma([0,1),\{0\})\times G_{|\partial X}\cong G(X,\partial X)_{|U\cap X}.
\]
\end{remark}

In this context it is convenient to use a slight variation of the adiabatic groupoid.
\begin{definition}\label{nctangent}
	Let $G(X,\partial X)$ be as in Definition \ref{bcalculus} and denote $X\setminus \partial X$ by $\mathring{X}$:
	\begin{itemize}
		\item let $G(X,\partial X)_{ad}^{\mathcal{F}}$ be the restriction of  $G(X,\partial X)_{ad}$ to the open subset $X_{\mathcal{F}}:=X\times[0,1]\setminus\partial X\times\{1\}= \mathring{X}\times[0,1]\cup \partial X\times [0,1)$ (the superscript $\mathcal{F}$ refers to a condition of being Fredholm that will be clear later). It is the union $(G_{|\mathring{X}})_{ad}\cup (G_{|\partial X})_{ad}^{\circ}\times \RR$.
		\item Let $\mathcal{T}_{nc}G(X,\partial X)$ be the restriction of $G(X,\partial X)_{ad}^{\mathcal{F}}$ to $X_{\partial}:=\mathring{X}\times\{0\}\cup \partial X\times[0,1)$. It is the union $ \mathfrak{A}(G_{|\mathring{X}})\cup(G_{|\partial X})_{ad}^{\circ}\times \RR $.
		\item Finally let $G(X,\partial X)_{ad}^{\circ}$ be the restriction of $G(X,\partial X)_{ad}^{\mathcal{F}}$ to $X\times[0,1)$.
	\end{itemize}
\end{definition}

Now let us give some results about the C*-algebras associated to these groupoids, that will be useful later.

\begin{lemma}\label{conemont}
	The C*-algebra $C^*_r(\Gamma(\RR_+,\{0\}))$ is K-contractible.
\end{lemma}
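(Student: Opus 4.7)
The plan is to identify the Monthubert groupoid $\Gamma(\RR_+,\{0\})$ with a transformation groupoid and then apply Connes' Thom isomorphism.

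First, taking $\rho(x)=x$ as the defining function for $\{0\}\subset\RR_+$, the Monthubert groupoid is
\[
\Gamma(\RR_+,\{0\})=\{(x,y,\alpha)\in\RR_+\times\RR_+\times\RR\,|\,x=e^{\alpha}y\}.
\]
Each element is uniquely determined by the pair $(y,\alpha)\in\RR_+\times\RR$, with the source and range maps given by $s(y,\alpha)=y$ and $r(y,\alpha)=e^{\alpha}y$, and with composition matching the group law of $\RR$. This shows that $\Gamma(\RR_+,\{0\})$ is canonically isomorphic to the transformation groupoid $\RR\ltimes\RR_+$ associated to the (amenable) action of $\RR$ on $\RR_+$ by $\alpha\cdot y=e^{\alpha}y$. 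Consequently
\[
C^*_r(\Gamma(\RR_+,\{0\}))\cong C_0(\RR_+)\rtimes_r \RR.
\]

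Next, I would apply Connes' Thom isomorphism to get
\[
K_j\bigl(C_0(\RR_+)\rtimes_r \RR\bigr)\cong K_{j+1}(C_0(\RR_+)),\qquad j=0,1.
\]
The one-point compactification of $\RR_+=[0,\infty)$ is homeomorphic to $[0,1]$ via $x\mapsto x/(1+x)$, a contractible space, so $K_*(C_0(\RR_+))=\widetilde{K}^{-*}([0,1])=0$. Hence $K_*(C^*_r(\Gamma(\RR_+,\{0\})))=0$, which is precisely the statement of $K$-contractibility.

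The only non-routine step is the groupoid identification; everything else is a direct application of well-known tools. As a sanity check, one can alternatively compute $K_*$ directly from the six-term exact sequence coming from the stratification of $\RR_+$ into $\mathring{\RR}_+$ and $\{0\}$, namely
\[
\xymatrix{0\ar[r] & \mathcal{K}\ar[r] & C^*_r(\Gamma(\RR_+,\{0\}))\ar[r] & C_0(\RR)\ar[r] & 0,}
\]
and verify that the associated index map $K_1(C_0(\RR))=\ZZ\to K_0(\mathcal{K})=\ZZ$ is an isomorphism (this is the classical Toeplitz-type boundary computation). Either approach gives the vanishing of both $K_0$ and $K_1$.
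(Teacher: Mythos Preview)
Your proof is correct and follows essentially the same approach as the paper: both identify $\Gamma(\RR_+,\{0\})$ with the transformation groupoid of the scaling action of $\RR$ (the paper writes it multiplicatively as $\RR_+\rtimes\RR^*_+$, you write it additively as $\RR\ltimes\RR_+$, which is the same thing via the exponential) and then invoke the Connes--Thom isomorphism together with the contractibility of $\RR_+$. Your added sanity check via the six-term sequence is not in the paper, but the remark following the lemma there records the same fact, namely that the boundary map of that extension is the Bott isomorphism.
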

\begin{proof}
	We have that $\Gamma(\RR_+,\{0\})= \RR^*_+\times\RR^*_+\cup\{0\}\times\{0\}\times\RR\rightrightarrows\RR_+$.
	It is isomorphic to the action groupoid $\RR_+\rtimes\RR_+^*$ (where $\RR_+^*$ acts on $\RR_+$ by multiplication) thanks to the morphism
	$\phi\colon\RR^*_+\times\RR^*_+\cup\{0\}\times\{0\}\times\RR \to \RR_+\rtimes\RR^*_+$ such that
	\begin{itemize}
		\item $(y_1,y_2)\mapsto (y_2,\frac{y_1}{y_2})$ if $y_1,y_2\neq0$;
		\item $(0,0,\lambda)\mapsto (0,e^\lambda)$.
	\end{itemize}
	Hence  $C^*_r(\Gamma(\RR_+,\{0\}))\simeq C^*_r(\RR_+\rtimes\RR^*_+)\simeq C_0(\RR_+)\rtimes\RR^*_+$ and, by the Connes-Thom isomorphism, $K_*\left(C_0(\RR_+)\rtimes\RR^*_+\right)\simeq K_{*-1}\left(C_0(\RR_+)\right)=0$.
\end{proof}

\begin{remark}\label{boundary-mont}
	By \cite[Proposition 3.5]{Mont}, $\Gamma(\RR_+,\{0\})$ is amenable. Then we have the following short exact sequence 
	\[
	\xymatrix{0\ar[r]& C^*_r(\RR^*_+\times\RR^*_+)\ar[r]& C^*_r(\Gamma(\RR_+,\{0\}))\ar[r] & C^*_r(\RR)\ar[r]&0},
	\] 
it is semi-split, the associate boundary map in KK-theory is an isomorphism by Lemma \ref{conemont} and it is given by the suspension isomorphism.          
\end{remark}

As we noticed in Remark \ref{fullvsred}, the restriction to a saturated closed subset of $G^{(0)}$ gives an exact sequence of full C*-algebras. But in general this fact is not true for the reduced groupoid C*-algebras.
We are going to prove that this is the case in the situations that we will encounter later.
Moreover we are going to prove that in those situations we have the completely positive lifting property. By
\cite[Theorem 1.1]{SkExact}, this implies that the boundary map associated to these exact sequences is given by the Kasparov product with an element of KK-theory.

\begin{lemma}\label{cone}
	Let $X$ be any smooth manifold and let $H\rightrightarrows X$ be a Lie groupoid and
	consider the groupoid $G=H\times (-1,1)\times(-1,1)\rightrightarrows X\times(-1,1)$. The $b$-groupoid  associated to the restriction of $G$ to $X\times[0,1)$ , denoted by $G(X\times[0,1),X\times\{0\})$, is given by $H\times\Gamma([0,1),\{0\})$.
	Then we have the following semi-split exact sequence of reduced C*-algebras 
	
	\[
	\xymatrix{0\ar[r]& C^*_r\left(H\times(0,1)\times(0,1)\right)\ar[r]& C^*_r\left(G(X\times[0,1),X\times\{0\})\right)\ar[r] & C^*_r\left(H\times\RR\right) \ar[r] & 0}
	\]
	and the boundary map of the long exact sequence of KK-groups associated to it
	is an isomorphism.
\end{lemma}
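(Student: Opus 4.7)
The plan is to reduce the statement to the one-dimensional Lemma \ref{conemont} already established for $\Gamma(\RR_+,\{0\})$, using that the groupoid in question is a direct product.

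First, I would pin down the decomposition $G(X\times[0,1),X\times\{0\}) \cong H\times\Gamma([0,1),\{0\})$ asserted in the statement: this follows from the fibered product description in Definition \ref{bcalculus} together with the local model of Remark \ref{boundaryloc}, applied to the already globally split $\hat{G} = H\times(-1,1)\times(-1,1)$. Since $\Gamma([0,1),\{0\})$ is, up to a diffeomorphism of the unit space, a restriction of $\Gamma(\RR_+,\{0\})$, it is amenable by \cite[Proposition 3.5]{Mont}; hence its reduced and full C*-algebras agree, and for the product groupoid this yields
\[
C^*_r\bigl(H\times\Gamma([0,1),\{0\})\bigr) \cong C^*_r(H)\otimes C^*_r\bigl(\Gamma([0,1),\{0\})\bigr),
\]
together with the analogous identifications $C^*_r(H\times(0,1)\times(0,1)) \cong C^*_r(H)\otimes C^*_r((0,1)\times(0,1))$ and $C^*_r(H\times\RR) \cong C^*_r(H)\otimes C^*_r(\RR)$.

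Next I would take as input the reduced extension for $\Gamma([0,1),\{0\})$ alone,
\[
0\to C^*_r((0,1)\times(0,1))\to C^*_r\bigl(\Gamma([0,1),\{0\})\bigr)\to C^*_r(\RR)\to 0,
\]
which is exact (restriction to a saturated closed subset, combined with amenability so that full equals reduced) and admits a completely positive section since all three terms are nuclear. By Lemma \ref{conemont} and the remark following it, its boundary morphism is the Bott element and is therefore a KK-equivalence.

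Finally I would tensor the extension by $C^*_r(H)$ over the minimal tensor product. Exactness is preserved because the middle term $C^*_r(\Gamma([0,1),\{0\}))$ is nuclear; semi-splitness is preserved because tensoring a CP section with $\mathrm{id}_{C^*_r(H)}$ produces a CP section of the tensored extension. Under the identifications above this yields precisely the sequence in the statement. The resulting KK-boundary is the exterior Kasparov product of $[\mathrm{id}_{C^*_r(H)}]$ with the Bott element, hence still invertible in KK, and in particular induces an isomorphism on K-theory.

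The main technical point to be careful about is essentially bookkeeping: checking that the groupoid isomorphism $G(X\times[0,1),X\times\{0\}) \cong H\times\Gamma([0,1),\{0\})$ intertwines the evaluation morphism coming from the saturated closed subset $X\times\{0\}\subset X\times[0,1)$ with the factorwise evaluation morphism on $\Gamma([0,1),\{0\})$, so that the abstract tensor product extension genuinely matches the extension in the statement.
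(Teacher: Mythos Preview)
Your proof is correct and follows essentially the same route as the paper: both use Remark \ref{boundaryloc} to obtain the product decomposition $H\times\Gamma([0,1),\{0\})$, invoke amenability/nuclearity of $\Gamma([0,1),\{0\})$ to get the tensor factorization and exactness, and then reduce to Lemma \ref{conemont}. The only cosmetic difference is that the paper concludes by observing that the middle term has vanishing K-theory (so the boundary is forced to be an isomorphism), whereas you identify the boundary explicitly as $[\mathrm{id}_{C^*_r(H)}]\otimes\mathrm{Bott}$; these are equivalent, and your version is arguably a bit more informative.
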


\begin{proof}
	By Remark \ref{boundaryloc}, the Monthubert groupoid associated to $G$ is given by $H\times\Gamma([0,1),\{0\})$,
	 we have that $C^*_r(H\times \Gamma([0,1),\{0\}))\cong C^*_r(H)\otimes C^*_r(\Gamma([0,1),\{0\}))$.
	In particular the amenability of $\Gamma([0,1),\{0\})$ implies that the following sequence
	\[
	\xymatrix@=0.9em{0\ar[r] & C^*_r(H)\otimes C^*_r((0,1)\times(0,1))\ar[r]& C^*_r(H)\otimes C^*_r(\Gamma([0,1),\{0\}))\ar[r]^(.6){\mathrm{ev}_{\partial}}& C^*_r(H)\otimes C^*_r(\RR)\ar[r]& 0
	}
	\]
is exact and semi-split.	Lemma \ref{conemont} implies that $K_*(C^*_r\left(G(X\times[0,1),X\times\{0\})\right))=0$, hence the result follows. 
\end{proof}


\begin{lemma}\label{montred}Let $G$ be a Lie groupoid over a manifold $X$ with boundary $\partial X$.
	The restriction to the boundary induces the following
	exact sequence of reduced C*-algebras
	\begin{equation}\label{montexact}
	\xymatrix{0\ar[r] & C_r^*(G_{|\mathring{X}})\ar[r] & C_r^*(G(X,\partial X))\ar[r]^{\mathrm{ev}_{\partial X}} & C_r^*(G_{|\partial X}\times\RR)\ar[r]& 0}.
	\end{equation}
Moreover this exact sequence is semisplit.
\end{lemma}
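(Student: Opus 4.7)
The plan is to reduce this global statement to the local product description of $G(X,\partial X)$ near the boundary given in Remark \ref{boundaryloc}, and then invoke Lemma \ref{cone} together with the amenability of the Monthubert cone groupoid $\Gamma([0,1),\{0\})$.

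First, I would establish exactness at the level of the convolution algebras. The set-theoretic decomposition $G(X,\partial X) = G_{|\mathring{X}} \sqcup G_{|\partial X}\times \RR$ immediately gives a short exact sequence
\[
0 \to C^\infty_c(G_{|\mathring{X}}) \to C^\infty_c(G(X,\partial X)) \to C^\infty_c(G_{|\partial X}\times \RR) \to 0.
\]
The inclusion extends to an ideal inclusion at the reduced C*-level because $\mathring{X}$ is an open saturated subset of $X$, and surjectivity of $\mathrm{ev}_{\partial X}$ on the C*-completions follows from surjectivity on the dense *-subalgebras.

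The critical step is to check the kernel condition $\ker \mathrm{ev}_{\partial X} \subseteq C^*_r(G_{|\mathring{X}})$. Here I would exploit the local picture: pick a tubular neighborhood $U$ of $\partial X$ as in Remark \ref{boundaryloc}, so that $G(X,\partial X)_{|U} \cong G_{|\partial X} \times \Gamma([0,1),\{0\})$. Since $\Gamma([0,1),\{0\})$ is amenable, its full and reduced C*-algebras coincide, and Lemma \ref{cone} supplies a semisplit short exact sequence
\[
0 \to C^*_r(G_{|\partial X}\times(0,1)\times(0,1)) \to C^*_r(G(X,\partial X)_{|U}) \to C^*_r(G_{|\partial X}\times\RR) \to 0.
\]
I would then choose a cut-off $\chi \in C^\infty(X)$ with $\chi \equiv 1$ in a neighborhood of $\partial X$ and $\mathrm{supp}\,\chi \subset U$. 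For $f \in \ker \mathrm{ev}_{\partial X}$ approximated by $f_n \in C^\infty_c(G(X,\partial X))$, the decomposition $f_n = \chi f_n + (1-\chi)f_n$ splits $f_n$ into a piece lying in $C^*_r(G(X,\partial X)_{|U})$ (with a boundary restriction tending to zero, hence lying asymptotically in $C^*_r(G_{|\partial X}\times(0,1)\times(0,1)) \subset C^*_r(G_{|\mathring{X}})$ by local exactness) and a piece already supported in $G_{|\mathring{X}}$; passing to the limit gives $f \in C^*_r(G_{|\mathring{X}})$.

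For the semisplitting, I would take the completely positive lift from Lemma \ref{cone} on $U$ (which exists thanks to the tensor product structure $C^*_r(G_{|\partial X})\otimes C^*_r(\Gamma([0,1),\{0\}))$), and post-compose with the inclusion $C^*_r(G(X,\partial X)_{|U}) \hookrightarrow C^*_r(G(X,\partial X))$ cut off by $\chi$; complete positivity is preserved by both compression and extension by zero. The main obstacle is really the cut-off gluing in the exactness step: since reduced groupoid C*-algebras are not in general well-behaved under restriction to closed invariant sets (see Remark \ref{fullvsred}), one must argue that the amenability of $\Gamma([0,1),\{0\})$ in the boundary direction trivializes the obstruction, so that the global exact sequence is inherited from the local product one. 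This is the technical heart of the argument and is what makes $G(X,\partial X)$ a well-behaved groupoid at the reduced level, unlike general non-amenable Lie groupoids.
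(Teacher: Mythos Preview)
Your strategy is the same as the paper's: localize near the boundary via Remark \ref{boundaryloc}, use the amenability of $\Gamma([0,1),\{0\})$ to get exactness of the local product sequence, and then glue by a cut-off. There is, however, a technical slip in your exactness argument. The decomposition $f_n = \chi f_n + (1-\chi)f_n$ uses one-sided multiplication, and $\chi f_n$ need \emph{not} lie in $C^*_r(G(X,\partial X)_{|U})$: the collar $U$ is not a saturated subset of $X$ for $G(X,\partial X)$, so $r^{-1}(U)$ is strictly larger than $G(X,\partial X)_U^U$. The paper fixes this by using the two-sided compression $\alpha\xi\alpha$, and then observing that the remainder $\xi-\alpha\xi\alpha=(1-\alpha)\xi\alpha+\xi(1-\alpha)$ lies in the ideal $C^*_r(G_{|\mathring{X}})$ because $\mathring{X}$ \emph{is} saturated and $(1-\alpha)$ is supported there. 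Your second piece $(1-\chi)f_n$ is fine for exactly this reason, but the first piece needs the symmetric compression.

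Two smaller remarks. First, there is no need to approximate by compactly supported elements and pass to a limit: since $\chi$ (or $\alpha$) acts as a multiplier on the reduced C*-algebra, the argument works directly for $\xi\in C^*_r(G(X,\partial X))$. Second, your description of the semisplitting (``cut off by $\chi$'') should likewise be read as the two-sided compression $\eta\mapsto\chi\, s(\eta)\,\chi$, which is completely positive and lands in the full algebra while still being a section of $\mathrm{ev}_{\partial X}$ because $\chi\equiv 1$ near $\partial X$; this is exactly what the paper writes.
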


\begin{proof}
By Remark \ref{boundaryloc}, there is an open neighbourhood $U$ of $\partial X$ such that  the restriction of $G(X,\partial X)$ to $U$ is isomorphic to $G_{|\partial X}\times \Gamma([0,1),\{0\})$ and, since $C^*_r(\Gamma([0,1),\{0\}))$ is nuclear, it follows that $C^*_r(G_{|\partial X}\times \Gamma([0,1),\{0\}))\cong C^*_r(G_{|\partial X})\otimes C^*_r(\Gamma([0,1),\{0\}))$.

We have the following commutative diagram
\[
\xymatrix@=0.9em{0\ar[r] & C^*_r(G_{|\partial X})\otimes C^*_r((0,1)\times(0,1))\ar[r]\ar[d]& C^*_r(G_{|\partial X})\otimes C^*_r(\Gamma([0,1),\{0\}))\ar[r]^(.6){\mathrm{ev}_{\partial}}\ar[d] & C^*_r(G_{|\partial X})\otimes C^*_r(\RR)\ar[r] \ar[d]^\cong& 0\\
0\ar[r] & C_r^*(G_{|\mathring{X}})\ar[r] & C_r^*(G(X,\partial X))\ar[r]^{\mathrm{ev}_{\partial X}} & C_r^*(G_{|\partial X}\times\RR)\ar[r]& 0
	}
\]
where the top row is exact thanks to the amenability of $\Gamma([0,1),\{0\})$ and the vertical arrows are inclusions of algebras.

Let $\{\alpha,1-\alpha\}$ be a partition of unity associated to the open cover $\{\partial X\times[0,1), \mathring{X}\}$ of $X$.
Let $\xi\in C_r^*(G(X,\partial X))$ be such that $\mathrm{ev}_{\partial X}(\xi)=0$.
Observe that 
\begin{itemize}
	\item$\alpha \xi\alpha$ belongs to $C^*_r(G_{|\partial X})\otimes C^*_r(\Gamma([0,1),\{0\}))$; 
	\item $\xi-\alpha \xi\alpha=(1-\alpha)\xi\alpha+\xi(1-\alpha)$ belongs to the ideal  $C_r^*(G_{|\mathring{X}})$, because $(1-\alpha)$ is supported on the saturated submanifold $\mathring{X}$.
\end{itemize}
Here we let the functions $\alpha$ and $1-\alpha$ act as in Remark \ref{multiplication}.
 Since the top row is exact, $\alpha \xi\alpha$ belongs to $C^*_r(G_{|\partial X})\otimes C^*_r((0,1)\times(0,1))$.
Consequently we have that $\xi\in C_r^*(G_{|\mathring{X}})$.
This proves the exactness of \eqref{montexact}.

Finally if $s$  is a completely positive section for $\mathrm{ev}_\partial$, then $\eta\mapsto \alpha s(\eta)\alpha$ is a completely positive section for $\mathrm{ev}_{\partial X}$.
\end{proof}

\begin{remark}\label{b-boundary}
As a consequence of Remark \ref{boundary-mont}, we have that the boundary morphism associated to the exact sequence \eqref{montexact} is the composition of the suspension isomorphism $K_*(C^*_r(G_{|\partial X})\otimes C^*_r(\RR))\to K_{*+1}(C^*_r(G_{|\partial X})\otimes C^*_r((0,1)\times(0,1)))$ and the morphism induced by the inclusion
$C^*_r(G_{|\partial X})\otimes C^*_r((0,1)\times(0,1))\to C_r^*(G_{|\mathring{X}})$.

The same is true at the level of Lie algebroids. Indeed the Lie algebroid  of $G_{|\partial X}\times \RR$ is  $\mathfrak{A}G_{|\partial X}\times \RR$ and the Lie algebroid of $G_{|\partial X}\times (0,1)\times(0,1)$ is given by $\mathfrak{A}G_{|\partial X}\times T(0,1)$ that is included into $\mathfrak{A}G$. Then if we consider the exact sequence
\[
\xymatrix{0\to C^*_r(\mathfrak{A}G_{|\partial X}\times T(0,1))\ar[r]& C^*_r(\mathfrak{A}G(X,\partial X))\ar[r]& C^*_r(\mathfrak{A}G_{|\partial X}\times \RR)\ar[r]&0},\]
 the boundary morphism is given as above using the suspension isomorphism.
 
 Now we want to understand the boundary map associated to the exact sequence of C*-algebras
 \begin{equation}\label{montad}
 \xymatrix@=0.9em{0\to C^*_r((G_{|\mathring{X}})_{ad}^\circ)\ar[r]& C^*_r(G(X,\partial X)_{ad}^{\circ})\ar[r]& C^*_r((G_{|\partial X}\times \RR)_{ad}^\circ)\ar[r]&0}.
 \end{equation}
 
 First observe that the boundary map associated to 
 	\[
 	\xymatrix{0\ar[r]& C^*_r((0,1)\times(0,1)_{ad})\ar[r]& C^*_r(\Gamma([0,1),\{0\})_{ad})\ar[r] & C^*_r(\RR_{ad})\ar[r]&0},
 	\] 
 restricts to the suspension isomorphism at any $t$ of the adiabatic deformation  and that it is an isomorphism by the Five Lemma. We will denote it by $\beta_{ad}$.
 
 Then notice that the map
\begin{equation}\label{Delta}
 \Delta_\partial\colon C^*_r((G_{|\partial X})_{ad}^\circ\times \RR_{ad})\to C^*_r( (G_{|\partial X}\times \RR)_{ad}^\circ)
 \end{equation}
 given by the restriction from $\partial X\times[0,1)\times[0,1]\to \partial X\times\{t=s, (t,s)\in [0,1)\times[0,1]\}$, induces KK-equivalence. Indeed $
 C^*_r( (G_{|\partial X}\times \RR)_{ad}^0)\cong C^*_r( (G_{|\partial X})_{ad}^\circ\times \RR)$ and, by means of this isomorphism, $\Delta_\partial$ is homotopic to $\mathrm{id}\otimes \mathrm{ev}_0\colon C^*_r((G_{|\partial X})_{ad}^\circ\times \RR_{ad})\to C^*_r( (G_{|\partial X})_{ad}^\circ\times \RR)$. But since $\mathrm{ev}_0\colon C^*_r(\RR_{ad})\to C^*_r(\RR)$ is a homotopy of C*-algebras we obtain a KK-equivalence.
 
Consider the following commutative diagram
 \[
 \xymatrix@=0.7em{0\ar[r]& C^*_r((G_{|\partial X})_{ad}^{\circ}\times ((0,1)\times(0,1))_{ad})
 	\ar[r]\ar[dd]^\Delta& C^*_r((G_{|\partial X})_{ad}^{\circ}\times \Gamma([0,1),\{0\})_{ad})\ar[r]\ar[dd]^{\Delta_{ad}}& C^*_r((G_{|\partial X})_{ad}^\circ\times \RR_{ad})\ar[r]\ar[dd]^{\Delta_\partial}&0\\
 	& & & &\\
 	0\ar[r]& C^*_r((G_{|\partial X}\times (0,1)\times(0,1))_{ad}^\circ)\ar[r]& C^*_r((G_{|\partial X}\times \Gamma([0,1),\{0\})_{ad}^{\circ})\ar[r]& C^*_r((G_{|\partial X}\times \RR)_{ad}^\circ)\ar[r]&0
 	} 
 \]
 where  all the vertical arrows are defined analogously to \eqref{Delta}.
The boundary morphism associated to the first row is given by the KK-element $\mathrm{id}\otimes \beta_{ad}$. 
 It follows that the boundary map of the second row is given by  $\partial_b'\in KK^1(C^*_r((G_{|\partial X})_{ad}^\circ)\otimes C^*_r(\RR), C^*_r((G_{|\partial X}\times (0,1)\times(0,1))_{ad}^\circ))$, which is the element defined by the Kasparov product
 \[
 [\mathrm{id}\otimes\mathrm{ev}_0]^{-1}\otimes(\mathrm{id}\otimes \beta_{ad})\otimes [\Delta].
 \]
 
 Finally, one obtains that the boundary map associated to the exact sequence \eqref{montad} is given by  the Kasparov product 
 \begin{equation}\label{boundaryadmont}
 \partial_b:=\partial_b'\otimes[\iota_b]\in KK^1(C^*_r((G_{|\partial X})_{ad}^\circ), C^*_r((G_{|\mathring{X}})_{ad}^{0}))
 \end{equation}
 where  $\iota_b\colon C^*_r((G_{|\partial X}\times (0,1)\times(0,1))_{ad}^\circ)\to C^*_r((G_{|\mathring{X}})_{ad}^{\circ})$ is the natural inclusion. 
   
\end{remark}

We need the following technical result (\cite[Lemma 2.2]{CunSk}).
\begin{lemma}\label{cplift}
	Let $I_1$ and $I_2$ be two ideals in a separable C*-algebra $A$. Let $I$ be the intersection of $I_1$ and $I_2$. 
	If the quotient maps $q_i\colon A\to A/I_i$, for $i=1,2$, have  completely positive sections, then the quotient map  $\pi\colon A\to A/I$ has a  completely positive section.
\end{lemma}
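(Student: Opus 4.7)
The plan is to combine the given completely positive sections $s_1$ and $s_2$ through a quasi-central approximate unit for $I_1$, exploiting the key algebraic identity $I_1\cdot I_2\subseteq I_1\cap I_2=I$, and then to promote the resulting asymptotic section to a genuine c.p.\ section by a separability argument.

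I begin by setting $t_i:=s_i\circ q_i\colon A\to A$; these are c.p.\ contractions satisfying $t_i(a)-a\in I_i$ and $t_i(I_i)=0$, so in particular $t_i(I)=0$. Since $A$ is separable, by Arveson's theorem I can pick a positive contractive quasi-central approximate unit $(u_n)_n$ for $I_1$ in $A$, and define
\[
\phi_n(a):=u_n^{1/2}\,t_2(a)\,u_n^{1/2}+(1-u_n)^{1/2}\,t_1(a)\,(1-u_n)^{1/2}.
\]
Each $\phi_n$ is c.p.\ and vanishes on $I$, hence descends to a c.p.\ map $\bar\phi_n\colon A/I\to A$.

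I next verify that $\pi\circ\bar\phi_n\to\mathrm{id}_{A/I}$ in the point-norm topology. Writing
\[
\phi_n(a)-a=u_n^{1/2}(t_2(a)-a)u_n^{1/2}+(1-u_n)^{1/2}(t_1(a)-a)(1-u_n)^{1/2}+R_n(a),
\]
where $R_n(a):=u_n^{1/2}a u_n^{1/2}+(1-u_n)^{1/2}a(1-u_n)^{1/2}-a$, quasi-centrality of $(u_n)$ gives $\|R_n(a)\|\to 0$. The first summand lies in $I_1\cdot I_2\subseteq I$ because $u_n^{1/2}\in I_1$ and $t_2(a)-a\in I_2$, so its image in $A/I$ vanishes; the second summand converges to zero in norm because $t_1(a)-a\in I_1$ and $(u_n)$ is an approximate unit for $I_1$. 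Hence $\|\pi(\bar\phi_n(x))-x\|\to 0$ for every $x\in A/I$.

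The main obstacle is the final step, namely upgrading the asymptotic c.p.\ section $(\bar\phi_n)$ to an honest c.p.\ section. Here separability of $A/I$ enters essentially: the sequence $(\bar\phi_n)$ determines a c.p.\ map $\Phi\colon A/I\to \ell^\infty(A)/c_0(A)$ whose composition with the canonical quotient to $\ell^\infty(A/I)/c_0(A/I)$ coincides with the constant-sequence embedding of $A/I$, and a standard completely positive lifting argument (in the spirit of Arveson and Choi--Effros) then yields an honest c.p.\ section $s\colon A/I\to A$ with $\pi\circ s=\mathrm{id}_{A/I}$.
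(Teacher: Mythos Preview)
The paper does not prove this lemma; it simply quotes the result from \cite[Lemma 2.2]{CunSk}. Your argument is essentially correct and follows the standard route (quasi-central approximate unit plus Arveson's lifting theorem). A couple of remarks to tighten it:

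\textbf{Contractivity.} You call the $t_i$ contractions without justification; the sections $s_i$ as given are merely c.p. This is harmless: once a c.p.\ section exists one can always arrange a contractive one, and in any case your $\bar\phi_n$ are uniformly bounded by $\|t_1\|+\|t_2\|$, which is all that is needed for the final step.

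\textbf{The final step.} Your $\ell^\infty(A)/c_0(A)$ description is slightly oblique and does not by itself produce a map into $A$. What you actually need is Arveson's lifting theorem (Duke Math.\ J.\ 44 (1977)): if $C$ is separable and $\varphi\colon C\to B/J$ admits point-norm approximate c.p.\ contractive liftings $\psi_n\colon C\to B$, then $\varphi$ admits an exact c.p.\ lifting. Applied with $C=A/I$, $B=A$, $J=I$, $\varphi=\mathrm{id}$ and $\psi_n=\bar\phi_n$ (after normalising), this gives the desired section. The proof of Arveson's theorem is itself another quasi-central approximate unit argument, combining the $\psi_n$ via a telescoping sum $\sum_k (e_{k+1}-e_k)^{1/2}\psi_{n_k}(\,\cdot\,)(e_{k+1}-e_k)^{1/2}$ for a suitably chosen subsequence; it would be cleaner to cite this directly rather than to route through the sequence algebra.

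The core of your argument---the identity $u_n^{1/2}(t_2(a)-a)u_n^{1/2}\in I_1\cdot I_2\subseteq I$ and the vanishing of $(1-u_n)^{1/2}(t_1(a)-a)(1-u_n)^{1/2}$ by the approximate-unit property---is exactly right.
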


\begin{lemma}\label{bkk}
	The restriction morphism \begin{equation}\mathrm{ev}_{X_{\partial}}\colon C^*_r\left(G(X,\partial X)_{ad}^{\mathcal{F}}\right)\to C^*_r\left(\mathcal{T}_{nc}G(X,\partial X)\right)\end{equation} induces a KK-equivalence.
\end{lemma}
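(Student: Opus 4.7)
The strategy is to realize the morphism $\mathrm{ev}_{X_\partial}$ as the quotient map of a semi-split short exact sequence of C*-algebras whose ideal is contractible, and then invoke the six-term exact sequence in KK-theory. Since $X_\partial$ is closed and saturated inside the unit space $X_{\mathcal{F}}$ of $G(X,\partial X)_{ad}^{\mathcal{F}}$, the open saturated complement is exactly $\mathring{X}\times(0,1]$. Over this complement the adiabatic parameter is non-zero, so $G(X,\partial X)_{ad}^{\mathcal{F}}$ restricts to the trivial family $G_{|\mathring{X}}\times(0,1]$, whose reduced C*-algebra is the cone $C^*_r(G_{|\mathring{X}})\otimes C_0((0,1])$. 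The candidate short exact sequence is therefore
\begin{equation*}
0\to C^*_r(G_{|\mathring{X}})\otimes C_0((0,1])\to C^*_r\bigl(G(X,\partial X)_{ad}^{\mathcal{F}}\bigr)\xrightarrow{\mathrm{ev}_{X_\partial}}C^*_r\bigl(\mathcal{T}_{nc}G(X,\partial X)\bigr)\to 0.
\end{equation*}

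The first step is to establish exactness of this sequence at the reduced level. Near the face $\partial X\times[0,1)$, Remark \ref{boundaryloc} provides a product decomposition in which the Monthubert factor is $\Gamma([0,1),\{0\})$, and near the face $\mathring{X}\times\{0\}$ the relevant piece is the adiabatic extension \eqref{AES} applied to $G_{|\mathring{X}}$. On each local model the restriction to the closed face fits into an exact sequence of reduced C*-algebras, thanks respectively to the amenability of $\Gamma([0,1),\{0\})$ (as exploited in Lemma \ref{montred}) and to the fact that $\mathfrak{A}(G_{|\mathring{X}})$ is amenable. One then patches these local exactness statements into the global one by means of a partition of unity subordinate to an open cover of $X_{\mathcal{F}}$ by these two local models together with $\mathring{X}\times(0,1]$, mimicking the proof of Lemma \ref{montred}.

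Next, I would construct a completely positive section for $\mathrm{ev}_{X_\partial}$ by gluing, via Lemma \ref{cplift}, the completely positive section produced near $\partial X\times[0,1)$ in the spirit of Lemma \ref{montred} with the standard completely positive section for the adiabatic evaluation $C^*_r((G_{|\mathring{X}})_{ad})\to C^*_r(\mathfrak{A}(G_{|\mathring{X}}))$ near $\mathring{X}\times\{0\}$. Consequently the short exact sequence above is semi-split, and its connecting map is represented by a class in KK-theory.

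To finish, observe that $C^*_r(G_{|\mathring{X}})\otimes C_0((0,1])$ is a cone algebra, rendered contractible by the homotopy $(s,f)\mapsto f(s\,\cdot\,)$ on the $C_0((0,1])$ factor; in particular it is KK-equivalent to $0$. The six-term exact sequence in KK-theory attached to our semi-split extension then forces $\mathrm{ev}_{X_\partial}$ to induce isomorphisms on $KK(-,\,\cdot\,)$ and $KK(\,\cdot\,,-)$, i.e.\ to be a KK-equivalence. The main technical difficulty lies in patching the exactness argument across the corner $\partial X\times\{0\}$, where the two local product structures interact; it is handled by choosing commuting cut-off functions, one for the defining function of $\partial X$ and one for the adiabatic parameter, and iterating the mechanism of Lemma \ref{montred}.
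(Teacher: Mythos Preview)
Your proposal is correct and follows the same strategy as the paper: show that $\mathrm{ev}_{X_\partial}$ sits in a semi-split short exact sequence with contractible kernel $C^*_r(G_{|\mathring{X}})\otimes C_0((0,1])$, then conclude via the long exact sequence in KK-theory. The paper's execution is slightly more economical. For exactness it avoids partition-of-unity patching by arguing in two global steps: if $\xi\in\ker\mathrm{ev}_{X_\partial}$ then in particular $\mathrm{ev}_0(\xi)=0$, so by the ordinary adiabatic exactness $\xi$ already lies over $(0,1]$, and then Lemma~\ref{montred} (applied to $G(X,\partial X)$ tensored with $C_0((0,1])$) forces $\xi\in C^*_r(G_{|\mathring{X}}\times(0,1])$. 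For the completely positive section the paper writes $X_\partial$ as the union of the two \emph{closed} saturated subsets $X\times\{0\}$ and $\partial X\times[0,1)$ (rather than the disjoint pieces $\mathring{X}\times\{0\}$ and $\partial X\times[0,1)$ you mention), so that $\ker\mathrm{ev}_{X_\partial}=\ker\mathrm{ev}_0\cap\ker\mathrm{ev}_{\partial X}$ and Lemma~\ref{cplift} applies directly; the corner $\partial X\times\{0\}$ is then simply the overlap of the two closed pieces and requires no separate cut-off argument.
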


\begin{proof}
	
	First we show that the following sequence 
		\begin{equation}\label{admont}
		\xymatrix{0\ar[r] & C_r^*(G_{|\mathring{X}}\times(0,1])\ar[r] & C^*_r\left(G(X,\partial X)_{ad}^{\mathcal{F}}\right)\ar[r]^{\mathrm{ev}_{X_\partial}} & C^*_r\left(\mathcal{T}_{nc}G(X,\partial X)\right)\ar[r]& 0}.
		\end{equation}
		is exact and semi-split. Indeed we have to check exactness only in the middle.
		
If $\xi \in C^*_r\left(G(X,\partial X)_{ad}^{\mathcal{F}}\right)$ is such that $\mathrm{ev}_{X_\partial}(\xi)=0$, then in particular
$\mathrm{ev}_0(\xi)=0$ in $C^*_r(\mathfrak{A}G(X,\partial X))$.
But we know that the sequence associated to $\mathrm{ev}_0$ is exact. Then $\xi$ belongs to $C^*_r(G(X,\partial X)\times(0,1])$.
Moreover, by hypotesis, the restriction to the boundary of $\xi$, as element of $C^*_r(G(X,\partial X)\times(0,1])$, is zero. We can use  Lemma \ref{montred} to prove that $\xi$ belongs to $C_r^*(G_{|\mathring{X}}\times(0,1])$ and then that \eqref{admont} is exact.

Let $A$ denote the C*-algebra $C^*_r\left(G(X,\partial X)_{ad}^{\mathcal{F}}\right)$.
To prove the fact that \eqref{admont} is semisplit we observe that
the ideal $I=\ker \mathrm{ev}_{X_{\partial}}$ is the intersection of the two ideals $I_1=\ker \mathrm{ev}_{\partial X\times[0,1)}$ and $I_2=\ker \mathrm{ev}_{0}$. By Lemma \ref{cplift} $ \mathrm{ev}_{X_{\partial}}$ has a completely positive section.

\end{proof}

\subsection{Index as deformation}\label{inddef}

Let $G$ be a smooth deformation groupoid, namely a Lie groupoid of the following kind:
$$G= G_1 \times \{0\} \cup G_2\times ]0,1] 
\rightrightarrows G^{(0)}=M\times [0,1].$$ 
One can consider the saturated 
open subset $M\times ]0,1]$ of $G^{(0)}$. Using the isomorphisms 
\[C^*(G\vert_{M\times ]0,1]}) \simeq C^*(G_2)\otimes C_0(]0,1])\] and 
$C^*(G\vert_{M\times\{0\}})\simeq C^*(G_1)$,  we obtain the following 
exact sequence of $C^*$-algebras: 
\begin{equation}\label{des}
\xymatrix{0\ar[r]& C^*(G_2)\otimes C_0(]0,1])\ar[r]^-{i}&  C^*(G)\ar[r]^{\mathrm{ev}_0}  & C^*(G_1)\ar[r] &0 }
\end{equation}
where $i$ is the inclusion map and $\mathrm{ev}_0$ is the evaluation map at 
$0$.

We assume now the exact sequence admits a completely positive section. Since 
the $C^*$-algebra $C^*(G_2)\otimes C_0(]0,1])$ is contractible, 
the long exact sequence in KK-theory shows that the group homomorphism 
$KK\left(A,C^*(G)\right) \rightarrow KK\left(A, C^*(G_1)\right)$, given by the Kasparov product with the element $[\mathrm{ev}_0]$,
is an isomorphism for each separable $C^*$-algebra $A$. 

In particular with $A=C^*(G)$
we get that $[\mathrm{ev}_0]$ is invertible in 
KK-theory: there is an element $[\mathrm{ev}_0]^{-1}$ in 
$KK\left(C^*(G_1), C^*(G)\right)$ such that $[\mathrm{ev}_0] {\otimes} [\mathrm{ev}_0]^{-1}=1_{C^*(G)}$ 
and $[\mathrm{ev}_0]^{-1} {\otimes} [\mathrm{ev}_0]=1_{C^*(G_1)}$. 

Let $\mathrm{ev}_1:C^*(G) \rightarrow C^*(G_2)$ be the 
evaluation map at $1$ and $[\mathrm{ev}_1]$ the corresponding element of 
$KK\left(C^*(G),C^*(G_2)\right)$. 
\begin{definition}\label{KKdeformation}
	The KK-element associated to the 
	deformation groupoid $G$ is defined by: 
	$$\partial_G=[\mathrm{ev}_0]^{-1} {\otimes} [\mathrm{ev}_1]\in  KK\left(C^*(G_1),C^*(G_2)\right) \ . $$
\end{definition}

\begin{remark}
	If the sequence \eqref{des} is still exact and semi-split when we consider the reduced groupoid C*-algebras instead of the  full ones, all that we said is still true in the reduced setting. This happens for instance when $G_1$ is amenable and, as we are going to see, the adiabatic deformation is an example of this case.
\end{remark}

Let $G\rightrightarrows X$ be a Lie groupoid and consider its adiabatic deformation $G_{ad}\rightrightarrows X\times[0,1]$.
Recall that it is of the form 
\[\mathfrak{A}(G)\times\{0\}\sqcup G\times(0,1]\] and that $C^*(\mathfrak{A}(G))\cong C_0(\mathfrak{A}^*(G))$ and then, for this case, we can consider the reduced groupoid C*-algebras. This is a particular case of a smooth deformation groupoid. Therefore we can associate to it a KK-element as in definition \ref{KKdeformation}.

\begin{definition}\label{adiabaticindex}
	We will denote by 
	\[\mathrm{Ind_G}\in KK\left(C^*_r(\mathfrak{A}(G)),C^*_r(G)\right)\] 
	the KK-element $\partial_{G_{ad}} $  and we will call the \emph{adiabatic $G$-index} the homomorphism \[K_*\left(C^*_r(\mathfrak{A}(G))\right)\to K_*\left(C_r^*(G)\right),\] given by the Kasparov product with $\mathrm{Ind}_G$.
\end{definition}

This homomorphism corresponds, up to Bott periodicity, to the boundary map associated to the exact sequence \eqref{AES}. Indeed,  one can replace $C^*_r(\mathfrak{A}G)$ with $C^*_r(G_{ad})$ through the KK-equivalence $\mathrm{ev}_0\colon C^*_r(G_{ad})\to C^*_r(\mathfrak{A}G)$. Noticing that $C^*_r(G^\circ_{ad})$ is isomorphic to $ \mathcal{C}_{\mathrm{ev}_1}$, the mapping cone C*-algebra of $\mathrm{ev}_1$, \eqref{AES} corresponds to the mapping exact sequence associated to $\mathrm{ev}_1$. The boundary map of a mapping cone exact sequence associated to a *-homomorphism is just the composition  of the same *-homomorphism and Bott periodicity.

\begin{remark}
In \cite{MP} the authors prove that the adiabatic $G$-index and the classical analytic index given by the pseudodifferential extension coincide.
\end{remark}

\section{Adiabatic groupoid and wrong-way functoriality}
\label{section}

\subsection{The pull-back of a groupoid}\label{pb}
Here we recall the pull-back construction for Lie groupoids.
Let $G\rightrightarrows X$ be a Lie groupoid and let $\varphi\colon Y\to X$ be a transverse map with respect to $G$.
This means that $d\varphi(T_yY)+ q(\mathfrak{A}_{\varphi(y)}(G))=T_{\varphi(y)}X$, where $q\colon\mathfrak{A}(G) \to TX$ is the anchor map of the Lie algebroid.

\begin{definition}\label{pull}
	From the previous data we can define the following spaces:
	\begin{itemize}
		\item $G_\varphi:=\{(\gamma,y)\in G\times Y\,|\,\varphi(y)=s(\gamma)\}$;
		\item $G^\varphi:=\{(y,\gamma)\in Y\times G\,|\,\varphi(y)=r(\gamma)\}$;
		\item $G^\varphi_\varphi:=\{(y_1,\gamma,y_2)\in Y\times G\times Y\,|\, \varphi(y_1)=r(\gamma)\,,\,\varphi(y_2)=s(\gamma)\}$.
	\end{itemize}
\end{definition}

	Contrary to $G_\varphi$ and $G^\varphi$, $G^\varphi_\varphi$ is a groupoid over $Y$.
	The source and the target map for $G^\varphi_\varphi$ are given by $s(y_1,\gamma,y_2)=y_2$ and $r(y_1,\gamma,y_2)=y_1$ respectively.
	Moreover $(y_1,\gamma,y_2)^{-1}=(y_2,\gamma^{-1},y_1)$ and $(y_1,\gamma,y_2)\cdot(y_2,\gamma',y_3)=(y_1,\gamma\cdot\gamma',y_3)$.
	
	Since $r,s\colon G\to X$ are submersions, $G_\varphi$ and $G^\varphi$ are submanifolds of $G\times Y$ and $Y\times G$ respectively.
	We are going to prove that $G_\varphi^\varphi$ is a smooth manifold.
	The space $G_\varphi$  in Definition \ref{pull} is given by the following pull-back
	\begin{equation}\label{k}
	\xymatrix{G_\varphi\ar[r]^{\tilde{\varphi}}\ar[d]^p & G\ar[d]^s \\
		Y\ar[r]^\varphi & X}
	\end{equation}
	and one can see that $p$ is a surjective submersion, because $s$ is so.   
	\begin{lemma}\label{pbt}
		Let $\widetilde{\varphi}$ be the map in diagram \eqref{k}.
The map $k=r\circ\tilde{\varphi}$ is a smooth submersion.
\end{lemma}
\begin{proof}
	Let $(\gamma_0,y_0)\in G_\varphi$ be such that $\gamma_0$ is a unit of $G$. We define the following inclusions
	\begin{itemize}
		\item $i\colon G_{s(\gamma_0)}\to G_\varphi$,
		$i\colon\gamma\mapsto(\gamma,y_0)$ and put $\delta=k\circ i$;
		\item $j\colon Y\to G_\varphi$ is such that
		$j\colon y\mapsto (id_{\varphi(y)},y)$ and put $\varepsilon=k\circ j$.
	\end{itemize}
	Notice that $s(\gamma_0)=\alpha(\gamma_0)=\beta(y_0)=\psi(y_0)$ and then, 
	by trasversality, it turns out that
	\begin{equation*}
	\begin{split}
	dk_{(\gamma_0,y_0)}(di(T_{\gamma_0}G_{s(\gamma_0)})+dj(T_{y_0}Y))&=d\delta(T_{\gamma_0}G_{s(\gamma_0)})+d\varepsilon(T_{y_0}Y)=\\
	&=q(\mathfrak{A}_{\psi(y_0)}(G))+d\varphi(T_{y_0}Y)=T_{\varphi(y_0)}X,
	\end{split}
	\end{equation*}
 that amount to prove the surjectivity of $d_{(\gamma_0,y_0)}k$.
	
	Now let us consider $(\gamma_1,y_1)\in G_\varphi$, where $\gamma_1$ is not necessarily a unit.
	Construct the following pull-back
	\[
	\xymatrix{G^{(2)}_\varphi\ar[r]^{p_1}\ar[d]^{p_2} & G\ar[d]^s \\
		G_\varphi\ar[r]^k & X}
	\]
	where $G^{(2)}_\varphi=\{(\gamma,\gamma',y)\in G\times G_\varphi\,|\, s(\gamma)=r(\gamma') \}$, $p_1\colon(\gamma,\gamma',y) \mapsto \gamma $ and $p_2\colon(\gamma,\gamma',y) \mapsto (\gamma',y)$.
	We have that $p_2$ is a submersion, because $s$ is so. Moreover, at the point $z=(\gamma_1,\varphi(y_1),y_1)$, $d_zp_1$ is onto, since $d_wk$ is onto at $w=(\varphi(y_1),y_1)$.
	
	Let $(m,id)\colon G^{(2)}_\varphi\to U_\varphi$ be the map such that $(m,id)\colon(\gamma,\gamma',y)\mapsto(\gamma\gamma',y)$. Then we have that $r\circ p_1=k\circ(m,id)$. But, at $(\gamma_1,\psi(y_1),y_1)$, $r\circ p_2$ is a submersion, hence so is $k$ at $(m,id)(\gamma_1,\varphi(y_1),y_1)=(\gamma_1,y_1)$.
\end{proof}	

	We have proved that the map $k\colon(\gamma,y)\mapsto r(\gamma)$ is a submersion because of transversality. Then by the following pull-back diagram
	\[
	\xymatrix{G^\varphi_\varphi\ar[r]\ar[d] & G_\varphi\ar[d]^k \\
		Y\ar[r]^\varphi & X}
	\]
	it follows that $G_\varphi^\varphi$ is a smooth manifold. Moreover $G^\varphi_\varphi\rightrightarrows Y$ is a Lie groupoid that we will call the pull-back groupoid of $G$ by $\varphi$.

One can easily show that \[\mathfrak{A}(G^\varphi_\varphi)\simeq\{(\xi,\eta)\in TY\times \mathfrak{A}(G)\,|\,d\varphi(\xi)=q(\eta)\},\]
where $q$ is the anchor map of $\mathfrak{A}(G)$. 
On the other hand the anchor map of $\mathfrak{A}(\,G^\varphi_\varphi)$ is the projection on $TY$.
Now we are going to prove that homotopic transverse maps induce isomorphic pull-back groupoids.
\begin{lemma}\label{isopull}
 Let $\Phi\colon Y \times[0,1]\to X$ be such that
\begin{enumerate}
\item 	$\varphi_t:=\Phi_{|Y\times\{t\}}\colon Y\to X$ is transverse with respect to $G$ for all $t\in[0,1]$;
\item for all fixed $y_0\in Y$ the set $\{\Phi(y_0,t)\,,\, t\in[0,1]\}$ is contained in an orbit of $G$.
	\end{enumerate}
 Then there is an isomorphism $\alpha(\varphi_t)\colon G^{\varphi_0}_{\varphi_0}\to G^{\varphi_1}_{\varphi_1}$.
\end{lemma}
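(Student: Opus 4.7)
The idea is to produce a smooth map $\sigma\colon Y\to G$ such that $s\circ\sigma=\varphi_0$ and $r\circ\sigma=\varphi_1$, and then to define $\alpha(\varphi_t)$ by conjugation by $\sigma$:
\[
\alpha(\varphi_t)(y_1,\gamma,y_2)\;:=\;\bigl(y_1,\;\sigma(y_1)\cdot\gamma\cdot\sigma(y_2)^{-1},\;y_2\bigr).
\]
The conditions on $\sigma$ guarantee that the product on the right is composable and lands in $G^{\varphi_1}_{\varphi_1}$; compatibility with units, inverses and composition is a short direct verification (the middle $\sigma(y_2)^{-1}\sigma(y_2)$ cancellation handles composition), and the inverse morphism is obtained by replacing $\sigma$ by $y\mapsto\sigma(y)^{-1}$. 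So the content of the lemma is entirely in the construction of $\sigma$.

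To build $\sigma$ I would integrate the homotopy $\Phi$ inside the groupoid. The velocity $V_t(y):=\partial_t\Phi(y,t)\in T_{\Phi(y,t)}X$ takes values pointwise in the image of the anchor $q\colon\mathfrak{A}(G)\to TX$ by hypothesis~(2), since that image is exactly the tangent space of the orbit. The first step is to produce a smooth time-dependent section $\xi_t(y)\in\mathfrak{A}_{\Phi(y,t)}(G)$ with $q(\xi_t(y))=V_t(y)$. Locally this is immediate: pick finitely many smooth sections of $\mathfrak{A}(G)$ whose anchors span the image of $q$ near a chosen basepoint, then read off the coefficients of $V$ in this local spanning set. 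A partition of unity subordinate to a locally finite open cover of $Y\times[0,1]$ patches local lifts together, since any convex combination of lifts is itself a lift.

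Having $\xi$, for each fixed $y$ the associated right-invariant time-dependent vector field on $G$ admits a unique smooth flow starting at $\mathrm{id}_{\varphi_0(y)}$, which traces out a smooth curve $g_t(y)\in G$ satisfying $s(g_t(y))=\varphi_0(y)$ and $r(g_t(y))=\Phi(y,t)$; smooth dependence of ODE solutions on parameters provides smoothness in $y$, and compactness of $[0,1]$ ensures the flow exists up to $t=1$. Setting $\sigma(y):=g_1(y)$ yields the required map. The main technical obstacle is precisely the construction of the global smooth lift $\xi$: even though the anchor $q$ may fail to have constant rank, its image is still locally spanned by smooth sections of $\mathfrak{A}(G)$, and it is this flexibility (together with the fact that the indeterminacy of a lift lies in the smooth subbundle $\ker q$) which makes the partition-of-unity patching legitimate.
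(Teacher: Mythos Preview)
Your argument is essentially the same as the paper's: both lift the $t$-velocity of $\Phi$ through the anchor to the Lie algebroid by a partition-of-unity argument, integrate to obtain a smooth family $y\mapsto\gamma_1(y)\in G$ with $s(\gamma_1(y))=\varphi_0(y)$ and $r(\gamma_1(y))=\varphi_1(y)$, and then conjugate. The only cosmetic difference is that the paper packages the lift and the integration inside the pull-back groupoid $G_\Phi^\Phi\rightrightarrows Y\times[0,1]$ (so that the flow is simply $\exp_y(t\xi)$ there), whereas you work directly with right-invariant time-dependent vector fields on $G$; the resulting $\sigma=\gamma_1$ and the conjugation formula are identical.
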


\begin{proof}
By (1) $G_\Phi^\Phi$ is a Lie groupoid and its Lie algebroid is given by
\[
\mathfrak{A}G_\Phi^\Phi=\{(U,V)\in T(Y\times[0,1])\times\mathfrak{A}G\,|\, d\Phi(U)=dr(V) \}.
\]
Let $\partial$  be the vector field that differentiates along the $t$-direction. 
By (2), for all fixed $y_0\in Y$, the integral curves of $\partial$, namely $\{(y_0,t)\,,\,t\in [0,1]\}$, are contained in the orbits of $G_\Phi^\Phi$. Since $Y$ is paracompact, there exists a locally finite cover $\{U_j\}_{j\in J}$ of $Y\times[0,1]$ such that for each  $j\in J$ we have a section $\xi_j$ of $\mathfrak{A}G_\Phi^\Phi$ restricted to $U_j$ that lifts the vector field $\partial_{|U_j}$.
Let $\{\beta_j\}$ be a partition of the unity associated to $\{U_j\}$, then 
$\xi:=\sum_j \beta_j\xi_j$ is a section of $\mathfrak{A}G_\Phi^\Phi$ such that $dr(\xi)=\partial$ everywhere on $Y$.
Now since we have that the curve $r(\exp_y(t\xi))$ gives the vector flow of $\partial$, it follows that
$\exp_y(t\xi)$ has the form
$\left((y,t),\gamma_t(y),(y,0)\right)$, where $s(\gamma_t(y))=\varphi_0(y)$ and  $r(\gamma_t(y))=\varphi_t(y)$.

Finally we can write the desired isomorphism between $G^{\varphi_0}_{\varphi_0}$ and  $G^{\varphi_1}_{\varphi_1}$ in the following way
\begin{equation}\label{isom}
\alpha(\varphi_t)\colon(y_1,\gamma, y_2)\mapsto (y_1,\gamma_1(y_1)\cdot\gamma\cdot\gamma_1(y_2)^{-1},y_2).
\end{equation}

\end{proof}

\begin{lemma}\label{concatenation}
	Let $\varphi_t\colon Y\to X$ and $\psi_t\colon Y\to X$ be as in Lemma \ref{isopull} and such that $\varphi_1=\psi_0$.
	Denote the concatenation of the paths $\varphi_t$ and $\psi_t$ by $\psi_t*\varphi_t$.
	Then
	\[
	\alpha(\psi_t)\circ\alpha(\varphi_t)= \alpha(\psi_t*\varphi_t).
	\]
\end{lemma}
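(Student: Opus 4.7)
The plan is to verify the formula by a direct computation using the explicit form of $\alpha$ given in \eqref{isom}, with a careful choice of sections on the concatenated homotopy.

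First, I recall from the proof of Lemma \ref{isopull} that the isomorphism $\alpha(\varphi_t)$ is constructed from a lift $\xi^\varphi$ of the vector field $\partial_t$ to a section of the Lie algebroid $\mathfrak{A}G^\Phi_\Phi$; integrating this lift produces a family $\gamma^\varphi_t(y)$ of elements of $G$ with $s(\gamma^\varphi_t(y))=\varphi_0(y)$ and $r(\gamma^\varphi_t(y))=\varphi_t(y)$, and the isomorphism reads
$$\alpha(\varphi_t)(y_1,\gamma,y_2) = \bigl(y_1,\;\gamma^\varphi_1(y_1)\cdot\gamma\cdot\gamma^\varphi_1(y_2)^{-1},\;y_2\bigr).$$
Do the same for $\psi_t$, obtaining a lift $\xi^\psi$ and elements $\gamma^\psi_t(y)$ with $s(\gamma^\psi_t(y))=\psi_0(y)=\varphi_1(y)$ and $r(\gamma^\psi_t(y))=\psi_t(y)$, so that $\gamma^\psi_1(y)\cdot\gamma^\varphi_1(y)$ is a well-defined element of $G$.

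Next I would build a specific section for the concatenated homotopy $\Phi'(y,t)=(\psi_t*\varphi_t)(y)$. After the standard reparametrization sending $[0,\tfrac12]$ to $[0,1]$ and $[\tfrac12,1]$ to $[0,1]$, the rescaled lifts $2\xi^\varphi$ and $2\xi^\psi$ glue to a piecewise smooth lift of $\partial_t$; smoothing this lift in an arbitrarily small neighborhood of $t=\tfrac12$ produces a genuine section whose integration yields a family $\chi_t(y)$ connecting $\varphi_0(y)$ to $(\psi_t*\varphi_t)(y)$. The key property, which follows from the multiplicativity of the exponential along a concatenation of integral curves of $\partial_t$, is the identity $\chi_1(y)=\gamma^\psi_1(y)\cdot\gamma^\varphi_1(y)$ in $G$.

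With this compatible choice of sections, a straightforward calculation finishes the proof: for any composable triple $(y_1,\gamma,y_2)\in G^{\varphi_0}_{\varphi_0}$ one has
$$\alpha(\psi_t)\bigl(\alpha(\varphi_t)(y_1,\gamma,y_2)\bigr)=\bigl(y_1,\;\gamma^\psi_1(y_1)\cdot\gamma^\varphi_1(y_1)\cdot\gamma\cdot\gamma^\varphi_1(y_2)^{-1}\cdot\gamma^\psi_1(y_2)^{-1},\;y_2\bigr),$$
and using $\chi_1(y)=\gamma^\psi_1(y)\cdot\gamma^\varphi_1(y)$ the right-hand side equals $\alpha(\psi_t*\varphi_t)(y_1,\gamma,y_2)$.

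The only genuinely delicate point is to check that the formula for $\alpha$ is independent, up to the identity isomorphism of $G^{\varphi_1}_{\varphi_1}$, of the choice of lift $\xi$: two different lifts differ by a section whose anchor projects to zero, so the corresponding $\gamma_1(y)$ differ by an element lying in the isotropy along $\varphi_1(y)$, and conjugation by such an element preserves each arrow in $G^{\varphi_1}_{\varphi_1}$. Once this is granted, the fact that we forced specific compatible choices for the three paths $\varphi_t$, $\psi_t$, $\psi_t*\varphi_t$ is no restriction, and the identity of the lemma follows.
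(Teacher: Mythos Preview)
Your main computation is precisely what the paper's one-line ``clear from the construction'' means: choosing the lift on the concatenated homotopy to be the concatenation of the two given lifts, and using that the flow of a right-invariant vector field on the groupoid respects multiplication, yields $\chi_1(y)=\gamma^\psi_1(y)\cdot\gamma^\varphi_1(y)$, after which the identity is immediate from \eqref{isom}.

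Your last paragraph, however, is incorrect. If $\xi$ and $\xi'$ are two lifts of $\partial_t$ then indeed $\gamma_1(y)$ and $\gamma'_1(y)$ differ by an element $h(y)$ of the isotropy group $G^{\varphi_1(y)}_{\varphi_1(y)}$, but the resulting automorphism
\[
(y_1,\delta,y_2)\longmapsto\bigl(y_1,\,h(y_1)\,\delta\,h(y_2)^{-1},\,y_2\bigr)
\]
of $G^{\varphi_1}_{\varphi_1}$ is the identity only when every $h(y)$ is central in its isotropy group. So $\alpha(\varphi_t)$ genuinely depends on the choice of lift, and your claimed justification for ignoring the choices fails.

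This is not fatal. The lemma should be read as an equality for compatible choices: with the lifts for $\varphi_t$ and $\psi_t$ fixed, the concatenated lift is the canonical choice for $\psi_t*\varphi_t$, and with that choice the equality holds on the nose. Your first three paragraphs prove exactly this, so simply drop the last paragraph. (For the applications later in the paper only the KK-class $[\alpha(\varphi_t)]$ matters, and the convex path $s\mapsto(1-s)\xi+s\xi'$ of lifts produces a homotopy of isomorphisms, so that class is indeed independent of the lift; but this is a different and weaker statement than the one you asserted.)
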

\begin{proof}
	It is clear from the construction of $	\alpha(\psi_t)$ and $\alpha(\varphi_t)$.
\end{proof}

There is a canonical way to construct a natural  $C^*_r(G^\varphi_\varphi)$-$C^*_r(G)$-bimodule associated to the pull-back procedure.
Let us consider the groupoid
\[
G^{\varphi\cup id_X}_{\varphi\cup id_X}\rightrightarrows Y\sqcup X
\]
and let us denote it with $L$.
We have that $L^X_X=G$, $L_Y^Y=G^\varphi_\varphi$,
$L^Y_X=G^\varphi$ and $L^X_Y=G_{\varphi}$.

\begin{remark}
	Because of this decomposition of $G^{\varphi\cup id_X}_{\varphi\cup id_X}$ we will keep the source and target notations for $G^\varphi$ and $G_\varphi$ too, though they are not groupoids.
\end{remark}
Let $p_Y$ be the projection given by the restriction to $L^Y_Y$ and let $p_X$ be the projection given by the restriction to $L^X_X$, they are in the multiplier algebra of $C^*_r(L)$.
Then $E_{\varphi}=p_YC^*_r(L)p_X=C^*_r(G^\varphi)$ is the $C^*_r(G^\varphi_\varphi)$-$C^*_r(G)$-bimodule we were searching for (here $C^*_r(G^\varphi)$ is not a C*-algebra).

\begin{definition}\label{morita}Denote by $\mu_{\varphi}$ the class of $E_\varphi$ in  $KK\left(C^*_r(G^\varphi_\varphi),C^*_r(G)\right)$.
		The $C^*_r(G^\varphi_\varphi)$-valued inner product on $E$ is given by $\langle x,y\rangle_{C^*_r(G^\varphi_\varphi)}=xy^*$ and the $C^*_r(G)$-valued one is given by $\langle x,y\rangle_{C^*_r(G)}=x^*y$.
		It is clear that, if the image of $\varphi$ intersects with all the orbits of $G$, then $E_\varphi$ is full with respect to the  $C^*_r(G)$-valued inner product. Then $\mu_\varphi$ is a Morita equivalence, whose inverse $\mu_{\varphi}^{-1}$ is given by the bimodule $F=p_XC^*_r(L)p_Y$.
		
\end{definition}

\begin{proposition}	\label{prodMorita}	
	If   $\varphi\colon Y\to X$ is transverse with respect to $G\rightrightarrows X$  and $\psi\colon Z\to Y$ is transverse with respect to $G^\varphi_\varphi\rightrightarrows Y$, then
	\[E_{\varphi\circ\psi}=E_\psi\otimes_{C^*_r(G^\varphi_\varphi)}E_\varphi.\]
\end{proposition}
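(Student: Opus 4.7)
The plan is to realize all three Hilbert bimodules appearing in the statement and the intermediate algebra $C^*_r(G^\varphi_\varphi)$ as corners of the convolution C*-algebra of a single linking Lie groupoid; the desired tensor product identity then reduces to associativity of multiplication inside that linking algebra, together with a standard density argument.

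First I verify that the combined map $g\colon Z\sqcup Y\sqcup X\to X$ defined by $\varphi\circ\psi$, $\varphi$ and $\mathrm{id}_X$ on the three components is transverse to $G$. Transversality on $X$ and $Y$ is part of the hypotheses, and transversality of $\varphi\circ\psi$ is an elementary diagram chase: given $v\in T_{\varphi\psi(z)}X$, write $v=d\varphi(\xi)+q(\eta)$ by transversality of $\varphi$; then write $\xi=d\psi(\zeta)+\xi'$ with $d\varphi(\xi')=q(\eta')$ using transversality of $\psi$ to $G^\varphi_\varphi$ (whose Lie algebroid anchor is projection to $TY$ by the description recalled just after Definition \ref{pull}); one obtains $v=d(\varphi\circ\psi)(\zeta)+q(\eta'+\eta)$. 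Hence by Remark \ref{pbt} the pullback $L:=G^g_g\rightrightarrows Z\sqcup Y\sqcup X$ is a bona fide Lie groupoid.

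Let $p_Z,p_Y,p_X$ denote the three projections in the multiplier algebra of $C^*_r(L)$ obtained by restricting to the corresponding components of $L^{(0)}$. Unravelling Definition \ref{pull} one obtains canonical groupoid identifications $L^X_X\cong G$, $L^Y_Y\cong G^\varphi_\varphi$, $L^Z_Z\cong G^{\varphi\circ\psi}_{\varphi\circ\psi}$ on the diagonal, and $L^Y_X\cong G^\varphi$, $L^Z_Y\cong (G^\varphi_\varphi)^\psi$, $L^Z_X\cong G^{\varphi\circ\psi}$ off the diagonal. In terms of Definition \ref{morita} this reads
\[
p_YC^*_r(L)p_X=E_\varphi,\quad p_ZC^*_r(L)p_Y=E_\psi,\quad p_ZC^*_r(L)p_X=E_{\varphi\circ\psi},
\]
with diagonal corners $C^*_r(G)$, $C^*_r(G^\varphi_\varphi)$ and $C^*_r(G^{\varphi\circ\psi}_{\varphi\circ\psi})$ respectively, and the bimodule structures from Definition \ref{morita} coincide with those induced by multiplication in $C^*_r(L)$.

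Consequently the product $a\cdot b$ in $C^*_r(L)$ yields a balanced $C^*_r(G^{\varphi\circ\psi}_{\varphi\circ\psi})$-$C^*_r(G)$-bilinear map $\Phi\colon E_\psi\odot E_\varphi\to E_{\varphi\circ\psi}$. Using the inner-product conventions of Remark \ref{bimodule} a direct computation gives
\[
\langle a\otimes b,a'\otimes b'\rangle_{C^*_r(G)}=b^*(a^*a')b'=(ab)^*(a'b')=\langle\Phi(a\otimes b),\Phi(a'\otimes b')\rangle_{C^*_r(G)},
\]
so $\Phi$ descends to an isometric bimodule map $\overline{\Phi}\colon E_\psi\otimes_{C^*_r(G^\varphi_\varphi)}E_\varphi\to E_{\varphi\circ\psi}$. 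The remaining point, which I expect to be the main (but routine) obstacle, is density of the image: convolution of compactly supported half-densities on $L^Z_Y$ and $L^Y_X$ is dense in $C_c(L^Z_X)$, hence in $E_{\varphi\circ\psi}$, by a partition-of-unity argument on the fibered product structure of $L$ together with an approximate-identity argument for the $C^*_r(G^\varphi_\varphi)$-action on $E_{\varphi\circ\psi}$. This yields the identification $E_{\varphi\circ\psi}=E_\psi\otimes_{C^*_r(G^\varphi_\varphi)}E_\varphi$.
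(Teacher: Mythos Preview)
Your proof is correct and follows essentially the same route as the paper: both arguments build the linking groupoid $L=G^{\varphi\circ\psi\,\sqcup\,\varphi\,\sqcup\,\mathrm{id}_X}_{\varphi\circ\psi\,\sqcup\,\varphi\,\sqcup\,\mathrm{id}_X}\rightrightarrows Z\sqcup Y\sqcup X$, identify $E_\psi$, $E_\varphi$, $E_{\varphi\circ\psi}$ as the $(Z,Y)$, $(Y,X)$, $(Z,X)$ corners of $C^*_r(L)$, and conclude by observing that the corner product realizes the balanced tensor product. The paper states this last step as ``obvious''; you spell out the transversality of $\varphi\circ\psi$, the isometry computation, and sketch the density of the image, which is a welcome amount of extra care. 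For the density step, the cleanest phrasing is that the $L$-saturation of $Y$ in $Z\sqcup Y\sqcup X$ contains $Z$ (since $\varphi\psi(z)\in\varphi(Y)$ for every $z$), so the ideal $\overline{C^*_r(L)\,p_Y\,C^*_r(L)}$ contains $p_ZC^*_r(L)p_X$, whence $E_\psi\cdot E_\varphi$ is dense in $E_{\varphi\circ\psi}$.
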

\begin{proof}
	Let  $H\rightrightarrows Z\sqcup Y\sqcup X$ be the pull-back groupoid of $G\rightrightarrows X$ by the map $\varphi\circ\psi\sqcup \varphi\sqcup \mathrm{id}_X\colon Z\sqcup Y\sqcup X \to X$.
	We can see $C^*_r(H)$
	as $3\times 3$ matrices of the following sort
	\[
	\begin{pmatrix}
	C^*_r\left(G^{\varphi\circ\psi}_{\varphi\circ\psi}\right) &  C^*_r\left(G^{\varphi\circ\psi}_\varphi\right)& C^*_r\left(G^{\varphi\circ\psi}\right)\\ 
	C^*_r\left(G^\varphi_{\varphi\circ\psi}\right) &  C^*_r\left(G_\varphi^\varphi\right)& C^*_r\left(G^\varphi\right)\\ 
	C^*_r\left(G_{\varphi\circ\psi}\right) &  C^*_r\left(G_{\varphi\circ\psi}^\varphi\right)& C^*_r\left(G\right)
	\end{pmatrix}
	\] 
	and that $E_{\varphi\circ\psi}=C^*_r\left(G^{\varphi\circ\psi}\right)$, $E_\psi=C^*_r\left((G^{\varphi}_\varphi)^\psi\right)\cong C^*_r\left(G^{\varphi\circ\psi}_\varphi\right)$ and $E_\varphi=C^*_r\left(G^\varphi\right)$ sit concretely in $C^*_r(H)$.
	
	First consider the map $\Xi\colon E_\psi\otimes_{C^*_r(G^\varphi_\varphi)}E_\varphi\to C^*_r\left(G^{\varphi\circ\psi}_\varphi\right)*C^*_r\left(G^\varphi\right)\subset C^*_r\left(G^{\varphi\circ\psi}\right)=E_{\varphi\circ\psi}$ given by
	$f\otimes g\to f*g$, with $f\in C^*_r\left(G^{\varphi\circ\psi}_\varphi\right)$ and $g\in C^*_r\left(G^\varphi\right)$. Let us prove that $\Xi$ is an isometry: 
	\begin{equation*}
	\begin{split}
	&\langle f\otimes g, f\otimes g\rangle_{\tiny{E_\psi\otimes_{C^*_r(G^\varphi_\varphi)}E_\varphi}}=\\&=\langle g,\langle f,f\rangle_{\tiny{E_\psi}}g\rangle_{\tiny{E_\varphi}}=\\
	&=\langle g,(f^**f)*g\rangle_{\tiny{E_\varphi}}=\\
	&=g^**(f^**f)*g=\\&=(g*f)^**(f*g)= \\
	&=\langle f* g, f*g\rangle_{E_{\varphi\circ\psi}}\\
	&=\langle \Xi(f\otimes g), \Xi(f\otimes g)\rangle_{E_{\varphi\circ\psi}}.
	\end{split}
	\end{equation*}
	
Now we have to prove that 
$C^*_r\left(G^{\varphi\circ\psi}_\varphi\right)*C^*_r\left(G^\varphi\right)= C^*_r\left(G^{\varphi\circ\psi}\right)$, namely that $\Xi$ is surjective.
Since $G^{\varphi\circ \psi}_{\varphi}$ is a left principal $G^{\varphi\circ \psi}_{\varphi\circ \psi}$-bundle over $Z$, by \cite[Proposition 2.10]{MRW} one has that the left action of $C^*_r(G^{\varphi\circ \psi}_{\varphi\circ \psi})$ on $E_\psi$ gives a surjective morphism of C*-algebras $C^*_r(G^{\varphi\circ \psi}_{\varphi\circ \psi})\to \mathcal{K}(E_\psi)$. In other words for any $\xi\in C^*_r(G^{\varphi\circ \psi}_{\varphi\circ \psi})$ there exist $\zeta_1,\zeta_2\in C^*_r(G^{\varphi\circ \psi}_{\varphi})$ such that $\xi=\zeta_1*\zeta_2^*$.

The same is true if we consider the action of $C^*_r(G^{\varphi\circ \psi}_{\varphi\circ\psi})$ on $C^*_r(G^{\varphi\circ \psi})$. Now since the obvious representation of $\mathcal{K}(E)$ on $E$ is non-degenerate for any Hilbert module $E$, it follows that 
$C^*_r(G^{\varphi\circ \psi})=C^*_r(G^{\varphi\circ \psi}_{\varphi\circ \psi})*C^*_r(G^{\varphi\circ \psi})$, where the right member of the equality is equal to $\mathcal{K}(E_{\varphi\circ\psi})\cdot E_{\varphi\circ\psi}$.

Then for any $\eta\in C^*_r(G^{\varphi\circ \psi})$ we have $\xi\in C^*_r(G^{\varphi\circ \psi}_{\varphi\circ \psi})$ and $\eta'\in C^*_r(G^{\varphi\circ \psi})$ such that $\eta=\xi*\eta'$. But, as we saw before, there exist $\zeta_1,\zeta_2\in C^*_r(G^{\varphi\circ \psi}_{\varphi})$ such that $\xi=\zeta_1*\zeta_2^*$.
Then
\begin{equation*}
\eta=\xi*\eta'=(\zeta_1*\zeta_2^*)*\eta'= \zeta_1*(\zeta_2^**\eta')
\end{equation*}
	with $\zeta_1\in C^*_r(G^{\varphi\circ \psi}_{\varphi})$ and
	$\zeta_2^**\eta'\in C^*(G^\varphi)$. This proves that $C^*_r\left(G^{\varphi\circ\psi}_\varphi\right)*C^*_r\left(G^\varphi\right)=C^*_r\left(G^{\varphi\circ\psi}\right) $ and then that 
		\[E_{\varphi\circ\psi}=E_\psi\otimes_{C^*_r(G^\varphi_\varphi)}E_\varphi.\]
\end{proof}

\begin{proposition}\label{moritarestriction}
	If   $\varphi\colon Y\to X$ is transverse with respect to $G\rightrightarrows X$ and $Z$ is a saturated and locally closed submanifold  of $X$ and $\varphi'=\varphi_{|\varphi^{-1}(Z)}$, then the following equality holds
	\[
	[e_{\varphi^{-1}(Z)}]\otimes \mu_{\varphi'}=\mu_\varphi\otimes [e_Z]
	\]
	in $ KK\left(C^*_r\left((G^\varphi_\varphi)\right),C^*_r\left(G_{|Z}\right)\right)$.
\end{proposition}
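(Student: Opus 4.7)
The plan is to exhibit both sides of the claimed equality as the Kasparov class of one and the same concrete $C^*_r(G^\varphi_\varphi)$--$C^*_r(G_{|Z})$--bimodule, namely
\[
F := C^*_r\!\left((G_{|Z})^{\varphi'}\right),
\]
with its canonical right Hilbert $C^*_r(G_{|Z})$--module structure (inner product $\langle x,y\rangle = x^*y$) and with the left $C^*_r(G^\varphi_\varphi)$--action obtained by first restricting via $e_{\varphi^{-1}(Z)}$ to $C^*_r((G_{|Z})^{\varphi'}_{\varphi'})$ and then convolving on the left. The left-hand side $e_{\varphi^{-1}(Z)}\otimes\mu_{\varphi'}$ is \emph{by definition} represented by $F$: Definition \ref{morita} gives $\mu_{\varphi'}$ as the class of $E_{\varphi'}=C^*_r((G_{|Z})^{\varphi'})$, and composing with the restriction *-homomorphism only pulls the left action back through $e_{\varphi^{-1}(Z)}$.

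For the right-hand side I would compute the internal tensor product $E_\varphi\otimes_{C^*_r(G)} C^*_r(G_{|Z})$ representing $\mu_\varphi\otimes e_Z$, and identify it with $F$. The natural framework is the auxiliary groupoid $L=G^{\varphi\sqcup \mathrm{id}_X}_{\varphi\sqcup \mathrm{id}_X}\rightrightarrows Y\sqcup X$ introduced before Definition \ref{morita}. Because $Z$ is $G$--saturated, $\varphi^{-1}(Z)\sqcup Z$ is a saturated locally closed subspace of $L^{(0)}$, and a short check gives $L_{|\varphi^{-1}(Z)\sqcup Z}=(G_{|Z})^{\varphi'\sqcup\mathrm{id}_Z}_{\varphi'\sqcup\mathrm{id}_Z}$. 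Compressing the restriction morphism $e_{\varphi^{-1}(Z)\sqcup Z}\colon C^*_r(L)\to C^*_r(L_{|\varphi^{-1}(Z)\sqcup Z})$ by the projections $p_Y$ and $p_X$ yields simultaneously the three restriction maps $e_{\varphi^{-1}(Z)}$, $e_Z$, and a surjection of bimodules $E_\varphi\to E_{\varphi'}$. The saturation of $Z$ gives $r(\gamma)\in Z\Leftrightarrow s(\gamma)\in Z$ for $\gamma\in G$, so that the fibre product defining $(G_{|Z})^{\varphi'}$ as a subspace of $G^\varphi$ coincides with $G^\varphi\cap(Y\times G_{|Z})$, and the natural bimodule map $E_\varphi\otimes_{C^*_r(G)}C^*_r(G_{|Z})\to F$ is an isomorphism.

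The main obstacle is therefore the analytic step in this last identification: showing that the algebraic tensor product $E_\varphi\odot_{C^*_r(G)}C^*_r(G_{|Z})$, spanned by elementary tensors $\xi\otimes g$ with $g$ supported on $G_{|Z}$, has image dense in $F$ for the Hilbert module norm, and that the natural $C^*_r(G_{|Z})$--valued inner products agree. This is handled by a standard approximate-unit argument in $C^*_r(G_{|Z})$ together with a partition-of-unity subordinated to a tubular neighbourhood of $Z$ in $X$, in the same spirit as the argument of Lemma \ref{montred}. The locally closed case reduces to the closed one by first passing to a $G$--invariant open subset $U\subset X$ in which $Z$ is closed and invoking functoriality of the construction under the ideal inclusion $C^*_r(G_{|U})\hookrightarrow C^*_r(G)$.
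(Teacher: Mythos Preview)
Your approach is correct and follows essentially the same route as the paper: both use the auxiliary groupoid $L=G^{\varphi\sqcup\mathrm{id}_X}_{\varphi\sqcup\mathrm{id}_X}$ and the $2\times 2$ matrix picture, observing that the saturated set $W=\varphi^{-1}(Z)\sqcup Z$ in $L^{(0)}$ makes restriction commute with taking the off-diagonal corner $p_YC^*_r(L)p_X$. The paper's proof is much terser---it simply asserts that ``restricting $L$ to $W$ and then taking the corner'' equals ``taking the corner and then restricting''---whereas you spell out the identification $E_\varphi\otimes_{C^*_r(G)}C^*_r(G_{|Z})\cong E_{\varphi'}$ explicitly and flag the analytic density/approximate-unit step; this is a reasonable elaboration of what the paper leaves implicit.
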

\begin{proof}
	Let $\iota\colon Z\to X$ and $\iota'\colon \varphi^{-1}(Z)\to Y$ be the obvious inclusions.
	Notice that $G_{Z}=G_\iota^\iota$
	and that $(G^\varphi_\varphi)_{|\varphi^{-1}(Z)})=(G^\varphi_\varphi)_{\iota'}^{\iota'}= G_{\varphi\circ\iota'}^{\varphi\circ\iota'}$.
	Moreover $[e_Z]$ is induced by the bimodule $C^*(G_\iota)$ and $[e_{\varphi^{-1}(Z)}]$ is induced by the bimodule $C^*_r((G^\varphi_\varphi)_{\iota'})$.
	 
	 Now, bearing in mind the same procedures we used in the proof of Proposition \ref{moritarestriction}, we have  the following isomorphisms
	 \begin{equation*}
	 \begin{split}
	 &C^*_r((G^\varphi_\varphi)_{\iota'})\otimes_{C^*_r(G_{\varphi\circ\iota'}^{\varphi\circ\iota'})}C^*_r(G^{\varphi'})\cong\\
	 &\cong C^*_r(G^\varphi_{\varphi\circ\iota'})\otimes_{C^*_r(G_{\varphi\circ\iota'}^{\varphi\circ\iota'})}C^*_r((G_\iota^\iota)^{\varphi'})\cong\\
	 &\cong C^*_r(G^\varphi_{\iota\circ\varphi'})\otimes_{C^*_r(G_{\iota\circ\varphi'}^{\iota\circ\varphi'})}C_r^*(G_\iota^{\iota\circ\varphi'})\cong\\
	 &\cong C^*_r(G^\varphi_\iota)=C^*_r(G^\varphi)\otimes_{C^*_r(G)}C^*_r(G_\iota)
	 \end{split}
	 \end{equation*}
	 which imply the equality we had to prove.
	
\end{proof}

\begin{remark}
It is worth recalling that everything was proved in this section for the reduced groupoid C*-algebras is a fortiori true for the full groupoid C*-algebras. 
\end{remark}

\subsection{Wrong-way functoriality for submersions}
Let $G\rightrightarrows G^{(0)}$  be a Lie groupoid. Put $X=G^{(0)}$ and let $\varphi\colon Y \to X$ be a  smooth map transverse with respect to $G$.
Consider the adiabatic groupoid of $G$
\[
G_{ad}=\mathfrak{A}G\times\{0\}\sqcup G\times(0,1] \rightrightarrows X\times[0,1].
\]
Let us recall that the Lie algebroid $\mathfrak{A}(G_{ad})$ is, as a vector bundle, isomorphic to $\mathfrak{A}(G)\times[0,1]$ and 	the anchor map of the adiabatic algebroid   is given by $q_{ad}\colon(\eta,t)\mapsto(t\cdot q(\eta),t)$, see for instance \cite[Example 7, Section 4]{NWX}.

We will need to do the pull-back along $\bar{\varphi}:=\varphi\times \mathrm{id}_{[0,1]}$ of  the adiabatic groupoid, but the fact that $\varphi$ is transverse  with respect to $G$ does not imply that $\bar{\varphi}$ is transverse with respect to $G_{ad}$. Indeed, at $0$ the anchor map of $\mathfrak{A}(G)\times[0,1]$ is zero. Thus $\bar{\varphi}$ is transverse if and only if $\varphi$ is a submersion. So let $\varphi$ be a submersion, then the pull-back groupoid of $G_{ad}$ along $\bar{\varphi}$, given by 
\[
(G_{ad})^{\bar{\varphi}}_{\bar{\varphi}}= \{\left((y_1,t),\gamma,(y_2,t)\right)\in (Y\times[0,1])\times G_{ad}\times (Y\times[0,1])\,|\,r(\gamma)=\bar{\varphi}(y_1,t),s(\gamma)=\bar{\varphi}(y_2,t)\},
\]
 is a smooth manifold.

\begin{lemma}\label{iso}
	The Lie algebroid of  $(G_{ad})^{\bar{\varphi}}_{\bar{\varphi}}$, as vector bundle, is non-canonically isomorphic to $\pi^*(\ker(d\varphi)\oplus \varphi^*\mathfrak{A}(G))$, where $\pi\colon Y\times[0,1]\to Y$ is the projection.
\end{lemma}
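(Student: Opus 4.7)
The plan is to apply the standard description of the Lie algebroid of a pull-back groupoid, specialize it to $G_{ad}$, and then trivialize the resulting $t$-dependent fiber by choosing a splitting of $d\varphi$.

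First, I would recall from the discussion preceding the lemma that for a transverse map $\bar{\varphi}\colon Y\times[0,1]\to X\times[0,1]$, the Lie algebroid of the pull-back fits as
\[
\mathfrak{A}\bigl((G_{ad})^{\bar{\varphi}}_{\bar{\varphi}}\bigr)_{(y,t)} \simeq \bigl\{(\xi,\eta)\in T_{(y,t)}(Y\times[0,1])\times\mathfrak{A}(G_{ad})_{(\varphi(y),t)}\,\big|\, d\bar{\varphi}(\xi)=q_{ad}(\eta)\bigr\}.
\]
Then I would fix a (non-canonical) vector bundle identification $\mathfrak{A}(G_{ad})\simeq \mathfrak{A}(G)\times[0,1]$ as recalled just before the lemma, under which the adiabatic anchor reads $q_{ad}(\eta,t)=(tq(\eta),0)\in T(X\times[0,1])$, with zero component along the $[0,1]$-direction.

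Next I would unfold the constraint. Writing $\xi=(\xi_Y,\lambda\partial_t)\in T_yY\oplus T_t[0,1]$, the condition $d\bar{\varphi}(\xi)=q_{ad}(\eta)$ decouples into $\lambda=0$ (because the $[0,1]$-component of the anchor vanishes) and
\[
d\varphi(\xi_Y)=t\,q(\eta).
\]
Hence the algebroid fiber at $(y,t)$ reduces to $F_{(y,t)}=\{(\xi_Y,\eta)\in T_yY\times\mathfrak{A}(G)_{\varphi(y)}\mid d\varphi(\xi_Y)=tq(\eta)\}$, already naturally a subbundle of $\pi^*TY\oplus\pi^*\varphi^*\mathfrak{A}(G)$.

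The remaining step, and the only place where submersivity of $\varphi$ is genuinely used, is to eliminate the $t$-dependence. Since $\varphi$ is a submersion, $d\varphi\colon TY\to\varphi^*TX$ is a surjective bundle map and admits a smooth right inverse $\sigma\colon\varphi^*TX\to TY$. Composing with the anchor $q$ of $\mathfrak{A}(G)$ gives a bundle map $\sigma\circ\varphi^*q\colon\varphi^*\mathfrak{A}(G)\to TY$, and I would define
\[
\Phi_{(y,t)}\colon\ker(d\varphi)_y\oplus\varphi^*\mathfrak{A}(G)_y\longrightarrow F_{(y,t)},\qquad (v,\eta)\longmapsto\bigl(v+t\,\sigma(q(\eta)),\eta\bigr).
\]
A direct check shows $\Phi_{(y,t)}$ lands in $F_{(y,t)}$, is fiberwise linear, and is an isomorphism (its inverse is $(\xi_Y,\eta)\mapsto(\xi_Y-t\sigma(q(\eta)),\eta)$); assembling over $(y,t)\in Y\times[0,1]$ yields the claimed vector bundle isomorphism with $\pi^*(\ker d\varphi\oplus\varphi^*\mathfrak{A}(G))$.

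I do not anticipate a serious obstacle: the only subtleties are the non-canonical choices (the trivialization $\mathfrak{A}(G_{ad})\simeq\mathfrak{A}(G)\times[0,1]$ and the splitting $\sigma$), which is precisely why the statement is phrased as an isomorphism of vector bundles rather than of Lie algebroids, and the need that $\varphi$ be a submersion, which is exactly the hypothesis ensuring both that $\bar{\varphi}$ is transverse to $G_{ad}$ and that a global smooth splitting $\sigma$ of $d\varphi$ exists.
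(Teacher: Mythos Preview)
Your proof is correct and follows essentially the same route as the paper: both identify the fiber of $\mathfrak{A}\bigl((G_{ad})^{\bar\varphi}_{\bar\varphi}\bigr)$ at $(y,t)$ as $\{(\xi_Y,\eta)\mid d\varphi(\xi_Y)=t\,q(\eta)\}$ and then use the short exact sequence $0\to\ker(d\varphi)\to F_{(y,t)}\to\varphi^*\mathfrak{A}(G)\to 0$ to trivialize it. The paper simply records this exact sequence and asserts that the isomorphism follows, whereas you carry out the splitting explicitly by choosing a right inverse $\sigma$ of $d\varphi$ and writing down the global bundle map $\Phi_{(y,t)}(v,\eta)=(v+t\,\sigma(q(\eta)),\eta)$; this is the same argument with the details filled in.
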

\begin{proof}
We have that
	\[
	\mathfrak{A}\left((G_{ad})^{\bar{\varphi}}_{\bar{\varphi}}\right)=\left\{\left( (\xi,t),(\eta,t)\right)\in (TY\times[0,1])\times \mathfrak{A}(G_{ad})\,|\,d\varphi(\xi)=t\cdot q(\eta)\right\}.\]

	We deduce that, for $t\neq0$, the fiber of $\mathfrak{A}\left((G_{ad})^{\bar{\varphi}}_{\bar{\varphi}}\right)$ on 
	$(x,t)$ is given by the following pull back
	\[\xymatrix{
		T_yY\times_{T_{\varphi(y)}X}\mathfrak{A}_{\varphi(y)}(G)\ar[r]\ar[d] & \mathfrak{A}_{\varphi(y)}(G)\ar[d]^{t\cdot q}\\
		T_yY\ar[r]_{d\varphi} & T_{\varphi(y)}X}
	\]
	and 
	for $t=0$ 	the fiber on $(y,0)$ is $\ker(d\varphi)_y\oplus \mathfrak{A}_{\varphi(y)}(G)$. 
	
	Since any vector bundle on $Y\times[0,1]$ is  isomorphic to a vector bundle that is constant in the $[0,1]$-direction, it is enough to know its restriction at $0$ which is exactly $\ker(d\varphi)\oplus \varphi^*\mathfrak{A}(G)$. 
\end{proof}

We are going to construct an element 
$\varphi_!^{ad}\in KK\left(C^*_r((G^\circ_{ad})^{\bar{\varphi}}_{\bar{\varphi}}),C^*_r(G^\circ_{ad})\right)$ associated to $\varphi$ by means of a deformation groupoid, as in the Section \ref{inddef}. It is given by  the following double adiabatic deformation \[\left((G_{ad})^{\bar{\varphi}}_{\bar{\varphi}}\right)_{ad}\rightrightarrows Y\times[0,1]\times[0,1].\]
Let us
give an explicit picture of  this groupoid.
We fix the variables of the two deformations:
\begin{itemize}
	\item in the horizontal direction of the square we have the parameter $t$ of the first adiabatic deformation;
	\item in the vertical direction of the square we have the parameter $u $ of the second adiabatic deformation, performed after the pull-back construction.
\end{itemize}
Then we obtain a groupoid, let us call it $H$, with objects set $Y\times[0,1]_t\times[0,1]_u$, such that
\begin{itemize}
	\item $H$ restricted to $\{u=c\}$, for any $c\in (0,1]$, is equal to $(G_{ad})^{\bar{\varphi}}_{\bar{\varphi}}$, the pull-back of the adiabatic deformation;
		\item $H$ restricted to $\{t=c'\}$,  for any $c'\in (0,1]$, is equal to 
		$\left(G^\varphi_\varphi\right)_{ad}$, the adiabatic deformation  groupoid of the pull-back;
	\item $H$ restricted to the $t$-axis, i.e. to $\{u=0\}$, is the Lie algebroid of $(G_{ad})^{\bar{\varphi}}_{\bar{\varphi}}$, that we have calculated above;

	\item $H$ restricted to $\{t=0\}$ is equal to the
	adiabatic deformation of the Lie algebroid $(\mathfrak{A}(G))^\varphi_{\varphi}$.
	
\end{itemize}

\begin{definition}\label{Lalg}
Let $\mathcal{L_\varphi}$ denote the reduced groupoid C*-algebra $C^*_r\left(H\right)$. We will drop the subscript $\varphi$ when the context does not create ambiguity.

For subset $K\in[0,1]\times[0,1]$ the set $Y\times  K$ is $H$-invariant and we denote by $\mathcal{L}_K$ the C*-algebra of $H_{|Y\times K}$. If $K$ is open then $\mathcal{L}_K$ is an ideal. 
In the particular case where $K=[0,1]\times[0,1]\setminus\{(1,1)\}$ let us denote $\mathcal{L}_K$ by $\mathcal{L}^\circ$. 

\end{definition}

\begin{lemma}\label{4} 
	The evaluation morphisms at $t=1$, $e^t_1\colon\mathcal{L}\to\mathcal{L}_{\{t=1\}}$ and $e^t_1\colon\mathcal{L}^\circ\to\mathcal{L}^\circ_{\{t=1\}}$ induce  KK-equivalences.
\end{lemma}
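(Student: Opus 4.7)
The plan is to show that the kernel of $e^t_1\colon\mathcal{L}\to\mathcal{L}_{\{t=1\}}$, namely $J:=C^*_r(H|_{\{t<1\}})$ where $H:=((G_{ad})^{\bar{\varphi}}_{\bar{\varphi}})_{ad}$, is K-contractible and that the defining extension is semi-split; the long exact sequence in KK-theory then forces $[e^t_1]$ to be a KK-equivalence. The same argument takes care of $\mathcal{L}^\circ$, since its kernel under $e^t_1$ is again $J$: the extra restriction $(t,u)\neq(1,1)$ is automatic once $t<1$.

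To see that $K_*(J)=0$, I would further restrict $J$ along the outer adiabatic parameter $u$. Consider the evaluation $\mathrm{ev}^u_0\colon J\to C^*_r(H|_{\{t<1,\,u=0\}})$. On the one hand the kernel is $C^*_r(H|_{\{t<1,\,u>0\}})\cong C^*_r\bigl((G_{ad})^{\bar{\varphi}}_{\bar{\varphi}}|_{t<1}\bigr)\otimes C_0(0,1]$, which is a cone, hence contractible. On the other hand the quotient $C^*_r(H|_{\{t<1,\,u=0\}})$ is the reduced C*-algebra of the Lie algebroid $\mathfrak{A}\bigl((G_{ad})^{\bar{\varphi}}_{\bar{\varphi}}\bigr)$ restricted to $\{t<1\}$; being abelian it is of the form $C_0(E^*|_{\{t<1\}})$ where $E=\mathfrak{A}\bigl((G_{ad})^{\bar{\varphi}}_{\bar{\varphi}}\bigr)$. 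By Lemma \ref{iso}, $E$ is (non-canonically) isomorphic to the pull-back $\pi^{*}\bigl(\ker d\varphi\oplus\varphi^{*}\mathfrak{A}(G)\bigr)$ along $\pi\colon Y\times[0,1]_t\to Y$, and therefore $E^*|_{\{t<1\}}$ is the product of a fixed vector bundle over $Y$ with $[0,1)$. It follows that the quotient is isomorphic to $C_0(E_{0}^{*})\otimes C_0[0,1)$, whose second factor is the cone $\CC[0,1)$, and is therefore K-contractible. Applying the six-term exact sequence to $0\to\ker\mathrm{ev}^u_0\to J\to C^*_r(H|_{\{t<1,\,u=0\}})\to 0$ then yields $K_*(J)=0$.

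For the semi-splitness of the extension $0\to J\to\mathcal{L}\to\mathcal{L}_{\{t=1\}}\to 0$ I would adapt the partition-of-unity construction of Lemma \ref{montred}: pick a cut-off function in $t$ supported near $t=1$, and combine it with completely positive sections coming from the intermediate evaluations $\mathrm{ev}^u_0$ and $\mathrm{ev}^t_0$ via Lemma \ref{cplift}. The open version $\mathcal{L}^\circ\to\mathcal{L}^\circ_{\{t=1\}}$ is handled word for word the same way.

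The main obstacle I anticipate is verifying that the intermediate six-term sequences are exact at the \emph{reduced} C*-algebra level: for instance, writing $C^*_r(H|_{\{t<1,\,u>0\}})$ as a tensor product with $C_0(0,1]$, and exhibiting the ideal/quotient structure of $J$ with respect to $\mathrm{ev}^u_0$, requires that the restriction maps behave well on reduced C*-algebras. This is ultimately an amenability issue for the Lie algebroid pieces involved (as in the discussion surrounding \eqref{AES}); once that is checked, everything else is a formal consequence of the six-term exact sequence and the cone-contractibility arguments above.
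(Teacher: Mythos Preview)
Your argument for the K-contractibility of the kernel $J=\mathcal{L}_{\{t\neq1\}}$ is essentially the paper's: restrict to $u=0$, note that the kernel of that restriction is a cone over $(0,1]_u$, and then use Lemma~\ref{iso} to identify the algebroid piece at $u=0$ with a constant bundle over $[0,1)_t$, hence a product with a contractible interval. Your observation that the kernels of $e^t_1$ for $\mathcal{L}$ and for $\mathcal{L}^\circ$ coincide (because $t<1$ already forces $(t,u)\neq(1,1)$) is correct and is exactly why the paper says the two cases are similar.

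Where your proposal diverges is the semi-splitness, and here your sketch is both more complicated than necessary and not obviously correct. Invoking Lemma~\ref{cplift} requires writing the kernel $J$ as an intersection of two ideals whose individual quotients already admit completely positive sections; it is unclear which pair of ideals you have in mind, and the evaluations $\mathrm{ev}^u_0$, $\mathrm{ev}^t_0$ you mention have kernels $\mathcal{L}_{\{u>0\}}$ and $\mathcal{L}_{\{t>0\}}$, neither of which is $J$, nor does their intersection give $J$. The paper bypasses all of this with a one-line construction: for $t\in(0,1]$ the groupoid $H|_{\{t=c\}}$ is isomorphic to $(G^\varphi_\varphi)_{ad}$ independently of $c$, so $\mathcal{L}^\circ_{\{t>0\}}$ contains $\mathcal{L}^\circ_{\{t=1\}}\otimes C_0(0,1]_t$ as an ideal, and $\xi\mapsto t\cdot\xi$ (extended by $0$ at $t=0$) is a completely positive section of $e^t_1$. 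This is the point you are missing; once you see it, no partition-of-unity or Lemma~\ref{cplift} argument is needed.

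Your final caveat about exactness at the reduced level is well taken but, as you yourself note, harmless here: the quotient in your $\mathrm{ev}^u_0$ sequence is the C*-algebra of a Lie algebroid, hence amenable, and the cone identification only uses nuclearity of $C_0(0,1]$.
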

\begin{proof}
	Let us prove it for $\mathcal{L}^\circ$, the proof for $\mathcal{L}$ is similar.
	Consider the following exact sequence
	\[
	\xymatrix{0\ar[r] & \mathcal{L}^\circ_{\{t\neq1\}}\ar[r] & \mathcal{L}^\circ\ar[r] & \mathcal{L}^\circ_{\{t=1\}}\ar[r] & 0 },
	\]
	the evaluation at $t=1$ has a completely positive section: since $\mathcal{L}^\circ_{(0,1]}$ contains as ideal $ \mathcal{L}^\circ_{\{t=1\}}\otimes C_0(0,1]$, the map $\xi\mapsto t\cdot\xi$ does the job. Hence this exact sequence is semi-split and it is sufficient to prove 
	the K-contractibility of $\mathcal{L}^\circ_{\{t\neq1\}}$.
	
	But let us point out that the evaluation map at $u=0$
	from $\mathcal{L}^\circ_{\{t\neq1\}}$ to $\mathcal{L}^\circ_{\{u=0\,,\,t\neq1\}}$ is a KK-equivalence: this follows from the KK-equivalence between $C_r^*(\mathcal{G}_{ad})$ and $C^*_r(\mathfrak{A}(\mathcal{G}))$ in
	the particular case of $\mathcal{G}=(G_{ad}^\circ)^{\bar{\varphi}}_{\bar{\varphi}}$.
	Hence we have to prove the K-contractibility of $\mathcal{L}^\circ_{\{u=0\,,\,t\neq1\}}$. Since $\mathcal{L}^\circ_{\{u=0\,,\,t\neq1\}}$ is the Lie algebroid of $(G_{ad}^\circ)^{\bar{\varphi}}_{\bar{\varphi}}$, 
	by Lemma \ref{iso}, $\mathcal{L}^\circ_{\{u=0\,,\,t\neq1\}}$ is non-canonically isomorphic to $C^*_r(\mathfrak{A}\left(G^\varphi_\varphi\right))\otimes C[0,1)$, that is clearly K-contractible.
\end{proof}

Let $[e^t_1]\in KK(\mathcal {L}^\circ,\mathcal{L}^\circ_{\{t=1\}})$ denote the KK-equivalence stated in Lemma \ref{4} and let $e_1^u\colon \mathcal{L}^\circ\to \mathcal{L}^\circ_{\{u=1\}}$ be the evaluation map at $u=1$.
\begin{definition}\label{!}
	Let $G\rightrightarrows X$ be a Lie groupoid and
	let $\varphi\colon Y\to X$ be a smooth submersion between smooth manifolds.
	Hence we can define the lower shriek map  $\varphi_!^{ad}$ as the element
	\[
[e^t_1]^{-1}\otimes_{\mathcal{L}^\circ}[e_1^u]\otimes_{\mathcal{L}^\circ_{\{u=1\}}}\mu_{\bar{\varphi}} \in KK(C^*_r((G^\varphi_\varphi)_{ad}^\circ),C^*_r(G_{ad}^\circ)),
	\]
	where $\mu_{\bar{\varphi}}$ is as in Definition \ref{morita}.
\end{definition} 

\begin{remark}
	Observe that, although $G$ does not appear in the notation, $\phi_!^{ad}$ does depend on $G$. We do this choice for not making heavier the notation and usually $G$ is understood from the context. By the way if ambiguities occur, this dependence will be made explicit.
\end{remark}

\begin{lemma}\label{path!}
	Let $\varphi_t\colon Y\to X$ be as in Lemma \ref{isopull}.
	If $\varphi_t\colon Y\to X$ is a submersion for all $t\in[0,1]$,    $(\varphi_0)_!^{ad}$ corresponds with $(\varphi_1)_!^{ad}$ through the isomorphism of adiabatic groupoids induced by \eqref{isom}.
\end{lemma}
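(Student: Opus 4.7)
My plan is to interpolate between $(\varphi_0)_!^{ad}$ and $(\varphi_1)_!^{ad}$ by performing the construction of Definition \ref{!} in a family over the path parameter $s\in[0,1]$, and then to conclude by the homotopy invariance of $KK$-theory.

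Since each $\varphi_s$ is a submersion, so is the map $\Phi_\#\colon Y\times[0,1]_s\to X\times[0,1]_s$, $(y,s)\mapsto(\varphi_s(y),s)$. Viewing $G\times[0,1]_s$ as a Lie groupoid over $X\times[0,1]_s$, the pullback
\[
G_{\mathrm{fam}}:=(G\times[0,1]_s)^{\Phi_\#}_{\Phi_\#}\rightrightarrows Y\times[0,1]_s
\]
is a Lie groupoid fibering over $[0,1]_s$ with fiber $G^{\varphi_s}_{\varphi_s}$ at $s$. The section $\xi$ of $\mathfrak{A}G^\Phi_\Phi$ constructed in the proof of Lemma \ref{isopull} has integral curves $\gamma_s(y)\in G$ from $\varphi_0(y)$ to $\varphi_s(y)$ depending smoothly on $(y,s)$, so formula \eqref{isom} defines a smooth family of isomorphisms $\beta_s\colon G^{\varphi_0}_{\varphi_0}\to G^{\varphi_s}_{\varphi_s}$, $(y_1,\gamma,y_2)\mapsto(y_1,\gamma_s(y_1)\cdot\gamma\cdot\gamma_s(y_2)^{-1},y_2)$, with $\beta_0=\mathrm{id}$ and $\beta_1=\alpha(\varphi_t)$. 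These assemble into a Lie groupoid isomorphism $G^{\varphi_0}_{\varphi_0}\times[0,1]_s\cong G_{\mathrm{fam}}$ over $Y\times[0,1]_s$.

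I then run Definition \ref{!} verbatim with $(G,\varphi)$ replaced by $(G\times[0,1]_s,\Phi_\#)$, treating $[0,1]_s$ as an extra factor. Every intermediate construction (the double adiabatic, the analogue of Lemma \ref{4}, the bimodule of Definition \ref{morita}) carries over: the K-contractibility arguments of Lemma \ref{4} are stable under products with $[0,1]_s$, and the Morita-style bimodule is formed the same way. This yields
\[
\Phi_!^{ad}\in KK\bigl(C^*_r((G_{\mathrm{fam}})_{ad}^\circ),\,C^*_r((G\times[0,1]_s)_{ad}^\circ)\bigr).
\]
Setting $A:=C^*_r((G^{\varphi_0}_{\varphi_0})_{ad}^\circ)$ and $B:=C^*_r(G_{ad}^\circ)$, the target is canonically $B\otimes C[0,1]_s$ since adiabatic commutes with product with $[0,1]_s$; functoriality of the adiabatic construction applied to the trivialization $G^{\varphi_0}_{\varphi_0}\times[0,1]_s\cong G_{\mathrm{fam}}$ identifies the source with $A\otimes C[0,1]_s$ in such a way that the evaluation at $s=0$ is the identity on $A$, while the evaluation at $s=1$ corresponds to the isomorphism induced by $\alpha(\varphi_t)_{ad}$.

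By the fiberwise naturality of every step, the fiber of $\Phi_!^{ad}$ at $s=s_0$ equals $(\varphi_{s_0})_!^{ad}$. Homotopy invariance of $KK$-theory (applied via the evaluations $B\otimes C[0,1]_s\to B$ and the constant inclusion $A\hookrightarrow A\otimes C[0,1]_s$) forces the fiber evaluations at $s=0$ and $s=1$ of $\Phi_!^{ad}$ to agree in $KK(A,B)$; thus $(\varphi_0)_!^{ad}$ equals $(\varphi_1)_!^{ad}$ precomposed with the $KK$-class of $\alpha(\varphi_t)_{ad}$, which is the desired correspondence. The main obstacle will be a detailed verification that the trivialization $G^{\varphi_0}_{\varphi_0}\times[0,1]_s\cong G_{\mathrm{fam}}$ propagates consistently through the full double-adiabatic and through the pullback along $\bar\Phi_\#$, so that the bimodule $\mu_{\bar\Phi_\#}$ decomposes as the product of $\mu_{\bar\varphi_0}$ with $[0,1]_s$; although this is formal, it requires careful tracking of the smooth structure \eqref{adiabatictopology} and of the various ideals entering Lemma \ref{4} and the definition of the $\circ$-restriction.
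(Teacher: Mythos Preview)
Your argument is correct and takes a genuinely different route from the paper. The paper proceeds far more directly: it observes that since each $\varphi_s$ is a submersion, each $\bar\varphi_s=\varphi_s\times\mathrm{id}_{[0,1]}$ is transverse to $G_{ad}$, and then applies Lemma~\ref{isopull} \emph{to the groupoid $G_{ad}$} (with the family $\bar\varphi_s$). This yields an isomorphism $\alpha(\bar\varphi_t)\colon (G_{ad})^{\bar\varphi_0}_{\bar\varphi_0}\to (G_{ad})^{\bar\varphi_1}_{\bar\varphi_1}$, hence an isomorphism $\mathcal{L}^\circ_{\varphi_0}\cong\mathcal{L}^\circ_{\varphi_1}$ of the double adiabatic $C^*$-algebras, which intertwines all three ingredients $e_1^t,\,e_1^u,\,\mu_{\bar\varphi}$ of Definition~\ref{!}. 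The equality $(\varphi_0)_!^{ad}=[\alpha(\varphi_t)]\otimes(\varphi_1)_!^{ad}$ then follows immediately.

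Your approach instead builds the entire lower shriek as a $C[0,1]_s$-linear $KK$-element and invokes homotopy invariance. This is more work, but it has a real advantage: you only apply Lemma~\ref{isopull} to $G$ itself (to trivialize $G_{\mathrm{fam}}$), never to $G_{ad}$. That matters because condition~(2) of Lemma~\ref{isopull} is problematic for $G_{ad}$ at the slice $t=0$: the orbits of $\mathfrak{A}(G)$ are points, so $\{(\varphi_s(y_0),0):s\in[0,1]\}$ lies in a single $G_{ad}$-orbit only when $\varphi_s(y_0)$ is constant in $s$. The paper's direct argument thus needs additional justification at $t=0$, which your method sidesteps entirely. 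The obstacle you flag at the end---that the construction of Definition~\ref{!} is fibrewise over $[0,1]_s$---is indeed purely formal: every object is a $C_0([0,1]_s)$-algebra, every morphism and bimodule is $C_0([0,1]_s)$-linear, and restriction to $\{s=s_0\}$ commutes with each step, so no hidden difficulty lurks there.
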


\begin{proof}
Since $\varphi_t\colon Y\to X$ is a submersion for all $t\in[0,1]$, it follows that
$\bar{\varphi}_t\colon Y\times[0,1]\to X\times[0,1]$ is  transverse with respect to $G_{ad}$ for all $t\in[0,1]$.
Then applying Lemma \ref{isopull}, we have an isomorphism $\alpha(\bar{\varphi}_t)$ between $(G_{ad})_{\bar{\varphi}_0}^{\bar{\varphi}_0}$ and $(G_{ad})_{\bar{\varphi}_1}^{\bar{\varphi}_1}$ inducing an isomorphism between
$\mathcal{L}_{\bar{\varphi}_0}^\circ$ and $\mathcal{L}_{\bar{\varphi}_1}^\circ$.
Then clearly we have that 
\[
(\varphi_0)_!^{ad}=[\alpha(\varphi_t)]\otimes (\varphi_1)_!^{ad}.
\]

\end{proof}

\begin{lemma}\label{functiso}
	Let $\varphi_t\colon Y\to X$ be as in Lemma \ref{isopull} and let $\psi\colon Z\to Y$ be a submersion.
	 Then we have the following equality
	 \begin{equation}\label{interw}
	 \psi_!^{ad}\otimes[\alpha(\varphi_t)]=[\alpha(\varphi_t\circ \psi)]\otimes\psi_!^{ad}\in KK(C^*_r((G^{\varphi_0\circ\psi}_{\varphi_0\circ\psi})_{ad}^\circ),C^*_r((G^{\varphi_1}_{\varphi_1})_{ad}^\circ))
	 \end{equation}
	 where  $\psi_!^{ad}$  is an element of  $KK(C^*_r((G^{\varphi_1\circ\psi}_{\varphi_1\circ\psi})_{ad}^\circ),C^*_r((G^{\varphi_1}_{\varphi_1})_{ad}^\circ)) $ on the right side  and it is an element of  $ KK(C^*_r((G^{\varphi_0\circ\psi}_{\varphi_0\circ\psi})_{ad}^\circ),C^*_r((G^{\varphi_0}_{\varphi_0})_{ad}^\circ)) $ on the left side.
\end{lemma}
\begin{proof}
It is enough to observe that 
the isomorphism $\alpha(\varphi_t)\colon (G^{\varphi_0}_{\varphi_0})_{ad}\to (G^{\varphi_1}_{\varphi_1})_{ad} $ induces an isomorphism
$ ((G^{\varphi_0}_{\varphi_0})_{ad})_\psi^\psi)_{ad}\to ((G^{\varphi_1}_{\varphi_1})_{ad})_\psi^\psi)_{ad} $. This isomorphism restricts to  an isomorphism $ ((G^{\varphi_0}_{\varphi_0})_\psi^\psi)_{ad}\to ((G^{\varphi_1}_{\varphi_1})_\psi^\psi)_{ad}$ that is exactly
$\alpha(\varphi_t\circ \psi)\colon(G^{\varphi_0\circ\psi}_{\varphi_0\circ\psi})_{ad}\to (G^{\varphi_1\circ\psi}_{\varphi_1\circ\psi})_{ad}$.

Now, by the definition of $\psi_!^{ad}$, the equality \eqref{interw} is clear.
\end{proof}

The following result is the Thom isomorphism in the context of the adiabatic deformation groupoid.

\begin{proposition}\label{thom}
	Let $p\colon E\to Y$ be a vector bundle and let $G\rightrightarrows Y$  be a Lie groupoid. Then $p_!^{ad}\in KK(C^*_r((G^p_p)^\circ_{ad}),C^*_r(G^\circ_{ad}))$ is a KK-equivalence.
\end{proposition}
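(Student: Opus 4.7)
The plan is to apply the five-lemma in KK-theory to the commutative ladder of semi-split six-term exact sequences induced by the standard adiabatic short exact sequences
\[
0 \to C^*_r(G^p_p)\otimes C_0(0,1) \to C^*_r((G^p_p)^\circ_{ad}) \to C^*_r(\mathfrak{A}(G^p_p)) \to 0
\]
and
\[
0 \to C^*_r(G)\otimes C_0(0,1) \to C^*_r(G^\circ_{ad}) \to C^*_r(\mathfrak{A}G) \to 0.
\]
The central vertical arrow is $p_!^{ad}$; call the induced maps on the ideals and on the quotients $A$ and $B$ respectively.

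First I would unpack the definition $p_!^{ad} = [e^t_1]^{-1}\otimes[e^u_1]\otimes\mu_{\bar p}$ via the double adiabatic groupoid $H = ((G_{ad})^{\bar p}_{\bar p})_{ad}$, and use restriction compatibilities in the spirit of Proposition \ref{moritarestriction} to identify $A$ and $B$. Restricting to the region $t\in(0,1]$ collapses the inner adiabatic and the Morita bimodule $\mu_{\bar p}$ becomes $\mu_p\otimes\mathrm{id}_{C_0(0,1)}$, so $A$ agrees with $\mu_p\otimes\mathrm{id}_{C_0(0,1)}$. Restricting instead to $t=0$, the inner groupoid becomes $(\mathfrak{A}G)^p_p$ and the residual construction produces $B$ as a shriek-type element associated to $p$ at the Lie algebroid level.

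Next I would check that $A$ and $B$ are both KK-equivalences. Since $p\colon E\to Y$ is a vector bundle projection, it is surjective and hence meets every orbit of $G$, so by Remark \ref{bimodule} the bimodule $\mu_p$ is a Morita equivalence, and thus $A$ is a KK-equivalence. For $B$, applying Fourier transform identifies $C^*_r(\mathfrak{A}(G^p_p))\cong C_0(\mathfrak{A}^*(G^p_p))$ and $C^*_r(\mathfrak{A}G)\cong C_0(\mathfrak{A}^*G)$. Using the non-canonical splitting $\mathfrak{A}(G^p_p)\cong p^*E\oplus p^*\mathfrak{A}G$ from Lemma \ref{iso}, the element $B$ corresponds to pushforward along the bundle projection $\mathfrak{A}^*(G^p_p)\to \mathfrak{A}^*G$, whose fiber at $(y,\xi)$ is $E_y\oplus E_y^*$. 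This fiber carries a canonical complex structure (via the natural symplectic pairing between a vector space and its dual), so the Connes--Thom isomorphism applies and $B$ is a KK-equivalence.

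Finally, the five-lemma in KK-theory yields that $p_!^{ad}$ is a KK-equivalence. The main obstacle is the precise identification of $B$ as a Thom-type shriek map: this requires carefully tracking how $p_!^{ad}$ descends to the quotients of the two adiabatic exact sequences through the restriction of the double adiabatic deformation $H$ at $t=0$, and verifying that the resulting KK-element matches the Connes--Skandalis shriek for the vector bundle $p$ at the algebroid level.
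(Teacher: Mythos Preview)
Your approach via the five-lemma is sound and leads to a correct proof, but it differs from the paper's argument. The paper works directly inside the double adiabatic deformation $\mathcal{L}$: since $p$ is surjective, $\mu_{\bar p}$ is a Morita equivalence, and since $[e^t_1]$ is already a KK-equivalence by Lemma~\ref{4}, it suffices to show that $[e^u_1]\colon \mathcal{L}\to\mathcal{L}_{\{u=1\}}$ is a KK-equivalence, i.e.\ that $\mathcal{L}_{\{u\neq1\}}$ is K-contractible. One reduces to $\mathcal{L}_{\{t=0,\,u\neq1\}}$ via the obvious exact sequence (the ideal $\mathcal{L}_{\{t\neq0,\,u\neq1\}}\cong C^*_r((G^p_p)^\circ_{ad})\otimes C_0(0,1]$ is contractible), and then observes that for every $u\in[0,1)$ the fibre is the groupoid C*-algebra of a vector bundle isomorphic to $E\oplus E\oplus\mathfrak{A}G$; hence $\mathcal{L}_{\{t=0,\,u\neq1\}}\cong C_0(E\oplus E\oplus\mathfrak{A}G)\otimes C_0[0,1)$, which is K-contractible.

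The trade-off is this: your route requires first establishing the commutative ladder (essentially the content of the later diagram~\eqref{shriek}, which however does not depend on this proposition) and then identifying the quotient map $B=dp_!$ as a Thom-type KK-equivalence for the bundle $E\oplus E^*\to\mathfrak{A}^*G$ --- precisely the step you flag as the main obstacle. The paper bypasses this identification entirely: instead of recognising $dp_!$ as a Thom class, it exploits the special feature of the vector-bundle case that the family $\mathcal{L}_{\{t=0\}}$ is a \emph{constant} family of abelian groupoids over $[0,1]_u$, so K-contractibility of $\mathcal{L}_{\{t=0,\,u\neq1\}}$ is immediate. Your argument is more conceptual and makes explicit the link with the classical Thom isomorphism; the paper's argument is shorter and entirely self-contained within the deformation picture.
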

\begin{proof}
	Since $p$ is surjective, by definition \ref{morita}, $\mu_{\bar{p}}$ is a Morita equivalence between $C^*_r\left((G^\circ_{ad})^{\bar{p}}_{\bar{p}}\right)$ and $C^*_r(G^\circ_{ad})$.
	Because of the definition of $p_!^{ad}$, it is sufficient to show that the evaluation at $u=1$ gives a KK-equivalence in $KK(\mathcal{L}^\circ,\mathcal{L}^\circ_{\{u=1\}})$.
	But this is equivalent to prove that the kernel of the evaluation at $u=1$,
	$\mathcal{L}^\circ_{\{u\neq1\}}$, is KK-contractible. This turns to be equivalent to KK-contractibility of $\mathcal{L}^\circ_{\{t=0,u\neq1\}}$ because of the following exact sequence
	\[
	\xymatrix{0\ar[r] & \mathcal{L}^\circ_{\{t\neq0,u\neq1\}}\ar[r] & \mathcal{L}^\circ_{\{u\neq1\}}\ar[r] & \mathcal{L}^\circ_{\{t=0,u\neq1\}}\ar[r] & 0 },
	\]
	and the KK-contractibility of $\mathcal{L}^\circ_{\{t\neq0,u\neq1\}}\simeq (G^p_p)_{ad}^\circ\otimes C_0(0,1]$.
		But $\mathcal{L}^\circ_{\{t=0,u\neq1\}}$ is 
	the adiabatic deformation of the pull-back groupoid
		\[
		(\mathfrak{A}(G))^p_p=\{(\eta_1,\xi,\eta_2)\in E\times\mathfrak{A}(G)\times E\,|,\ p(\eta_1)=r(\xi), p(\eta_2)=s(\xi)\}
		\]
	that in turn is exactly the definition of the Whitney sum of vector bundles $E\oplus E\oplus \mathfrak{A}(G)$.
	But the adiabatic deformation of a Lie groupoid given by a vector bundle is isomorphic to the pull-back of the vector bundle itself on the interval $[0,1)$.

	Then $\mathcal{L}^\circ_{\{t=0,u\neq1\}}$ is isomorphic to \[\mathcal{L}^\circ_{\{t=0,u\neq1\}}\simeq C_0(E\oplus E\oplus \mathfrak{A}(G))\otimes C_0[0,1)\]
	which is KK-contractible.
	
\end{proof}

Now we want to check that the construction of the lower shriek element behaves well with respect to the composition of submersions.
\begin{proposition}\label{compo}
	Let $G\rightrightarrows Z$ be a Lie groupoid.
	Let $f\colon Y\to X$ and $g\colon X\to Z$ be two smooth submersions between smooth manifolds. Then we have that
	\[
	(g\circ f)^{ad}_{!}=f_!^{ad}\otimes g^{ad}_!\in KK(C^*_r((G^{g\circ f}_{g\circ f})_{ad}^\circ),C^*_r(G_{ad}^\circ)).
	\]
\end{proposition}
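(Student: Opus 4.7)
The plan is to prove the identity by constructing a combined deformation groupoid that simultaneously encodes both shriek constructions and then to identify the two sides of the identity with two different evaluation paths through this groupoid.

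First, I would dispose of the Morita part. Applying Proposition \ref{prodMorita} to $\bar{f}\colon Y\times[0,1]\to X\times[0,1]$ (transverse to $(G^g_g)_{ad}$) and $\bar{g}\colon X\times[0,1]\to Z\times[0,1]$ (transverse to $G_{ad}$), we obtain $\mu_{\bar f}\otimes\mu_{\bar g}=\mu_{\overline{g\circ f}}$ after the canonical identification $((G_{ad})^{\bar g}_{\bar g})^{\bar f}_{\bar f}\cong (G_{ad})^{\overline{g\circ f}}_{\overline{g\circ f}}$. So the substance of the proposition is showing that the double-adiabatic pieces composing $f_!^{ad}$ and $g_!^{ad}$ assemble correctly.

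Next, I would construct a Lie groupoid $\mathcal{N}$ over $Y\times[0,1]_t\times[0,1]_{u_g}\times[0,1]_{u_f}$ by iterating twice the pullback-then-adiabatic construction used in Definition \ref{Lalg}: start with $G_{ad}$ in the $t$-direction, apply the pullback-adiabatic process along $\bar g$ in the direction $u_g$, and then again along $\bar f$ in the direction $u_f$. The geometric content is chosen so that restriction to various faces of the parameter cube recovers the pieces already present in the paper:
\begin{itemize}
\item at $\{u_g=u_f=1\}$ one recovers $(G_{ad})^{\overline{g\circ f}}_{\overline{g\circ f}}$, whose further adiabatic deformation is $\mathcal{L}_{g\circ f}$;
\item at $\{u_g=0,u_f=1\}$ one recovers (up to canonical isomorphism) the pullback-adiabatic groupoid $((G^g_g)_{ad})^{\bar f}_{\bar f}$ attached to $f$;
\item at $\{u_f=0\}$ one obtains a groupoid whose adiabatic direction in $u_g$ computes $g_!^{ad}$, with $f$ contributing only via its vertical bundle, controlled by Proposition \ref{thom};
\item at $\{t=0\}$ one recovers an iterated Lie algebroid, which is amenable and K-contractible in the direction at issue.
\end{itemize}

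The key technical step is then to check, by arguments parallel to Lemma \ref{4}, that the relevant evaluation morphisms at $t=1$ and at $u_f=1$, $u_g=1$ induce KK-equivalences; the proofs reduce to showing that the kernels are contractible by the usual $\xi\mapsto t\cdot\xi$ trick, combined with Proposition \ref{thom} to handle the $t=0$ slices where pullback-along-a-vector-bundle intervenes. Granting these KK-equivalences, $(g\circ f)^{ad}_!$ is computed by the evaluation path that goes diagonally from $(t,u_g,u_f)=(0,0,0)$ to $(1,1,1)$ through the corner $(1,1,1)$, while $f_!^{ad}\otimes g_!^{ad}$ corresponds to the path that first reaches $(1,0,1)$, then $(1,1,1)$; both paths compute the same KK-element by functoriality of the inverse Kasparov product applied to the inverted evaluations in Definition \ref{!}, combined with the Morita identity above.

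The main obstacle is the bookkeeping for the groupoid $\mathcal{N}$: one has to specify the smooth structure so that at each face of the cube the expected algebroid/pullback description is realised, analogously to the smooth structure on the deformation to the normal cone recalled in \eqref{adiabatictopology}. Once $\mathcal{N}$ is constructed and the evaluation-equivalences are in place, the equality $(g\circ f)^{ad}_!=f_!^{ad}\otimes g^{ad}_!$ follows formally by associativity of the Kasparov product.
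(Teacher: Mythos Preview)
Your overall strategy---building the triple-parameter deformation groupoid $\mathcal{N}=\left(\left(\left((G_{ad})^{\bar{g}}_{\bar{g}}\right)_{ad}\right)^{\bar{f}}_{\bar{f}}\right)_{ad}$ over $Y\times[0,1]^3$ and comparing the two sides via face evaluations---is exactly the paper's approach, so you are on the right track. However, two points need correction.

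First, your face identifications are wrong. The groupoid $((G^g_g)_{ad})^{\bar f}_{\bar f}$ does \emph{not} sit at $\{u_g=0,u_f=1\}$: at $u_g=0$ the middle adiabatic deformation degenerates to the Lie algebroid of $(G_{ad})^{\bar g}_{\bar g}$, not to a restriction at $t=1$. The three shriek maps live on the three faces at parameter value $1$: $f_!^{ad}$ (relative to $G^g_g$) is encoded in the face $\{t=1\}$; $(g\circ f)_!^{ad}$ in the face $\{u_g=1\}$; and $g_!^{ad}$ is related via $\mu_{\bar f}$ to the face $\{u_f=1\}$. All the KK-equivalences you need are instances of Lemma~\ref{4} on these $=1$ faces and their shared edges; Proposition~\ref{thom} is not used here.

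Second, your ``evaluation path'' comparison is too vague and misses the essential ingredient. Beyond Proposition~\ref{prodMorita}, the paper's proof crucially uses Proposition~\ref{moritarestriction} to commute the Morita class $\mu_{\bar f}$ past face restrictions: this is what transports $g_!^{ad}$, which is defined through the groupoid $\mathcal{L}_g$ over $X$, onto the face $\{u_f=1\}$ of $\mathcal{N}$ over $Y$, so that both sides can be compared inside the KK-groups attached to $\mathcal{N}$. Once the correct faces are identified and this commutation is in place, the equality follows by an explicit chain of compositions of (inverses of) evaluation KK-equivalences rather than an informal homotopy of paths.
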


\begin{proof}

	Consider the Lie groupoid $K$ given by \[\left(\left(\left((G_{ad})^{\bar{g}}_{\bar{g}}\right)_{ad}\right)^{\bar{f}}_{\bar{f}}\right)_{ad}\rightrightarrows Y\times[0,1]\times[0,1]\times[0,1],\]
	where we set $t,u,v$ as the parameters respectively of the  first, the second and the third  adiabatic deformation in the construction of the groupoid.
	For sake of clarity let us set some notations:
	\begin{itemize}
		\item $C^\circ:=([0,1]\times[0,1]\times[0,1])\setminus\{(1,1,1)\}$ is the cube without the point $(1,1,1)$;
		\item $S^\circ:=([0,1]\times[0,1])\setminus\{(1,1)\}$ is the square without the point $(1,1)$;
		\item $H=Y\times C^\circ$;
		\item $T=\{t=1\}\subset H$, $U=\{u=1\}\subset H$ and $V=\{v=1\}\subset H$ are the right, the posterior and the top faces of the cube, respectively;
		
	\end{itemize}
	
	Restricting $K$ to the  faces $T$, $U$ and $V$ and  to their shared edges, we recognize the following groupoids:
	\begin{itemize}
		\item $K_T$ is equal to $\left( \left(\left(G^g_g\right)_{ad}\right)^{\bar{f}}_{\bar{f}} \right)_{ad}\,$ restricted to $Y\times S^\circ$ ,
		$\,K_U$ is equal to $\left( \left(\left( G_{ad}\right)^{\bar{g}}_{\bar{g}}\right)^{\bar{f}}_{\bar{f}} \right)_{ad}\,$ restricted to $Y\times S^\circ$,
		$\,K_V$ is equal to $\left( \left(\left( G_{ad}\right)^{\bar{g}}_{\bar{g}}\right)_{ad} \right)^{\bar{f}}_{\bar{f}}\,$ restricted to $Y\times S^\circ$;
		\item $K_{T\cap U}=\left(\left( G_g^g\right)^f_f\right)^\circ_{ad}\,$, 	$\,K_{U\cap V}=\left( \left( G^\circ_{ad}\right)_{\bar{g}}^{\bar{g}}\right)^{\bar{f}}_{\bar{f}}\,$, $\,K_{T\cap V}=\left( \left( G_g^g\right)^\circ_{ad}\right)^{\bar{f}}_{\bar{f}}\,$
		.
	\end{itemize}
		Using Lemma \ref{4}, we get the following KK-equivalences:
	$e^{V}_{V\cap T}$, $e^{T}_{T\cap U}$, $e^{U}_{T\cap U}$, $e^H_{T}$ and $e^{H}_U$.
		We have that $f_!^{ad}$ is constructed through the groupoid $K_T$ and it is
	equal to \[(e^T_{T\cap U})^{-1}\otimes e^{T}_{T\cap V}\otimes\mu_f.\]
		On the other hand $g_!^{ad}$ is constructed by means of the restriction of the Lie groupoid $L=\left((G_{ad})^{\bar{g}}_{\bar{g}}\right)_{ad}$ to $X\times S^\circ$ and it is equal to
	\[(e_M)^{-1}\otimes e_N\otimes\mu_g,\]
	where $M=X\times\{1\}\times[0,1)$ and $N=X\times[0,1)\times\{1\}$, inside $X\times S^\circ$, and $L_M=\left(G^g_g\right)^\circ_{ad}$, $L_N=(G^\circ_{ad})^{\bar{g}}_{\bar{g}}$. Thus we have that
	\[
	f_!^{ad}\otimes g_!^{ad}=(e^T_{T\cap U})^{-1}\otimes e^{T}_{T\cap V}\otimes\mu_f\otimes(e_M)^{-1}\otimes e_N\otimes\mu_g.
	\]
	Applying Proposition \ref{moritarestriction}, we get the following equality 
	\[
	\mu_f\otimes(e_M)^{-1}\otimes e_N\otimes\mu_g=(e^V_{T\cap V})^{-1}\otimes e^V_{V\cap U}\otimes\mu_{f'}\otimes\mu_g
	\]
	and, using Proposition \ref{prodMorita}, the term on the right side is equal to
	\[
(e^V_{T\cap V})^{-1}\otimes e^V_{V\cap U}\otimes\mu_{g\circ f}.
	\]
	Then
	\begin{equation*}
	\begin{split}
	f_!^{ad}\otimes g_!^{ad}&=(e^T_{T\cap U})^{-1}\otimes e^{T}_{T\cap V}\otimes(e^V_{T\cap V})^{-1}\otimes e^V_{V\cap U}\otimes\mu_{g\circ f}=\\
	&=(e^T_{T\cap U})^{-1}\otimes (e^{H}_{T})^{-1}\otimes e^H_V\otimes e^V_{V\cap U}\otimes\mu_{g\circ f}=\\
	&=(e^H_{T\cap U})^{-1}\otimes e^H_{V\cap U}\otimes\mu_{g\circ f}=\\
	&=(e^U_{T\cap U})^{-1}\otimes(e^H_U)^{-1}\otimes e^H_U\otimes e^U_{V\cap U}\otimes\mu_{g\circ f}=\\
	&=(e^U_{T\cap U})^{-1}\otimes e^U_{V\cap U}\otimes\mu_{g\circ f}=\\
	&=(g\circ f)_!^{ad}.
	\end{split}
	\end{equation*}

\end{proof}

\subsection{Wrong-way functoriality for transverse maps}

Let $G\rightrightarrows X$ be a Lie groupoid and let $\varphi\colon Y\to X$ be a smooth map that is transverse with respect to $G$.
Since $\mathfrak{A}G$ is defined as the normal bundle of $X$ in $G$, there exists an open neighbourhood  $V\subset \mathfrak{A}G$ of the zero section that is isomorphic to a tubular neighbourhood $U$ of $X$ in $G$. Let us call  $\theta\colon V\to U$ this isomorphism. Observe that, if $v\in V$,  $p(v)=s(\theta(v))$ and in particular that $s(\theta(tv))$ is constant with respect to $t$, for $tv\in V $.
Let $W\subset \varphi^*\mathfrak{A}G$ be the pull-back of $V$ by $\varphi$.
 Consider the following commutative square
\begin{equation}\label{eqtrans}
\xymatrix{
W\ar[r]^{\widetilde{\varphi}}\ar[d]^p &U\ar[d]^s\\
	Y\ar[r]^\varphi & X
	}
\end{equation}
where 
$\widetilde{\varphi}$ is defined as the composition of $	\varphi^*\mathfrak{A}G\to\mathfrak{A}G$  and $\theta$.

Notice the following facts:
\begin{itemize}
	\item $p$ is a bundle projection, then by Lemma \ref{thom} $p_!^{ad}$ is a KK-equivalence;
	\item as in Lemma \ref{pbt} one proves that $r\circ \widetilde{\varphi}$ is a submersion  and let us fix a canonical homotopy $\{\psi_t\colon\xi\mapsto r\circ\widetilde{\varphi}(t\xi)\}_{t\in[0,1]}$ of transverse maps from
	$\varphi\circ p= s\circ\widetilde{\varphi}$ to $r\circ\widetilde{\varphi}$. This specific homotopy is used from now on in the paper.
	Let $\alpha(\psi_t)$ be the associated isomorphism between $G_{\varphi\circ p}^{\varphi\circ p}$ and $G_{r\circ\widetilde{\varphi}}^{r\circ\widetilde{\varphi}}$, as in \eqref{isom}.
\end{itemize}

\begin{definition}\label{!trans}
	Let $\varphi\colon Y\to X$ be as above. Define the lower shriek map associated to $\varphi$ as the element
	\[
	\varphi_!^{ad}:=(p_!^{ad})^{-1}\otimes[\alpha(\psi_t)]
\otimes(r\circ \widetilde{\varphi})_!^{ad}\in  KK\left(C^*_r((G_\varphi^\varphi)_{ad}^\circ),C_r^*(G_{ad}^\circ) \right)	\]
\end{definition}

\begin{lemma}
	The element $\varphi_!^{ad}$ does not depend on the choice of the tubular neighbourhood $U$ of $X$ in $G$.
\end{lemma}
\begin{proof}
	Let $W$ and $U$ be as in \eqref{eqtrans}. Let $U'$ be another  tubular neighbourhood of $X$ in $G$ such that $U'\subset U$ and construct $W'$ in the same way as $W$. Observe that we have the following commutative diagram
\[	\xymatrix{W'\ar[r]^{\widetilde{\varphi}'}\ar[d]^{i}& U'\ar[d]^j\\
		W\ar[r]^{\widetilde{\varphi}}\ar[d]^p &U\ar[d]^s\\
		Y\ar[r]^\varphi & X}
\]	
and put $p'=p\circ i$, $s'=s\circ j$ and $r'=r\circ j$. Notice that $i$ and $j$ are submersive embeddings.
Let $	\varphi_!^{ad}$ and $	(\varphi_!^{ad})'$ be the lower shriek maps constructed using $U$ and $U'$ respectively. Finally let $\psi'_t$ be defined by $\xi\mapsto r'\circ\widetilde{\varphi}'(t\xi)$ for $t\in[0,1]$, notice that $\psi_t'=\psi_t\circ i$.
Consider the following computation
\begin{equation*}
\begin{split}
(p')_!^{ad}\otimes 	(\varphi_!^{ad})'&=[\alpha(\psi_t')]\otimes(r'\circ \widetilde{\varphi}')_!^{ad}=\\
 &=[\alpha(\psi_t')]\otimes(r\circ j\circ \widetilde{\varphi}')_!^{ad}=\\
 &=[\alpha(\psi_t')]\otimes(r\circ\widetilde{\varphi}\circ i)_!^{ad}=\\
 &=[\alpha(\psi_t')]\otimes i_!^{ad}\otimes (r\circ\widetilde{\varphi})_!^{ad}=\\
 &=i_!^{ad}\otimes [\alpha(\psi_t)]\otimes (r\circ\widetilde{\varphi})_!^{ad}
\end{split}
\end{equation*}
where we used Proposition \ref{compo}  and Lemma \ref{functiso} in the third and the fourth equality respectively.

Since $p'= i\circ p$, we have that
\[
i_!^{ad}\otimes p_!^{ad}\otimes (\varphi_!^{ad})'=i_!^{ad}\otimes [\alpha(\psi_t)]\otimes (r\circ\widetilde{\varphi})_!^{ad}.
\]
If we prove that $i_!^{ad}$ is a KK-equivalence, then since  $p_!^{ad}$ is a KK-equivalence we will obtain 
the equality  $ (\varphi_!^{ad})'=\varphi_!^{ad}$.
To prove this fact, observe that there exists an $\epsilon>0$ such that $k\colon\xi\mapsto \epsilon \xi$ is a submersive embedding from $W$ to $W'$ and such that $k\circ i$ and $i\circ k$ are homotopic to the identity. Let $h_t$ be the homotopy from $i\circ k$ to $\mathrm{id}_{W}$. By Lemma \ref{path!} we have the following equalities
\[
\mathrm{Id}=(\mathrm{id}_{W})_!^{ad}=[\alpha(h_t)]\otimes k_!^{ad}\otimes i_!^{ad}
\]
which give a left inverse for $i_!^{ad}$.
Using  the homotopy from $k\circ i $ to $\mathrm{id}_{W'}$ we obtain the analogous equality which gives a right inverse for $i_!^{ad}$ and proves that it is a KK-equivalence.

Finally let $U$ and $U'$ be two arbitrary tubular neighbourhood  of $X$ in $G$, using the inclusions of $U\cap U'$ into $U$ and $U'$  and the computations above, it is clear that the defining $\varphi_!^{ad}$  using $U$ or $U'$ gives the same class in $KK\left(C^*_r((G_\varphi^\varphi)_{ad}^\circ),C_r^*(G_{ad}^\circ) \right)$.
 \end{proof}

\begin{remark}
	If $\varphi\colon Y\to X$ is a submersion, then
	the elements in definitions \ref{!} and \ref{!trans} coincide.
	Indeed we have that $\{\psi_t\colon\xi\mapsto r\circ\widetilde{\varphi}(t\xi)\}_{t\in[0,1]}$ is a path of submersions, then by Lemma \ref{path!} $[\alpha(\psi_t)]
	\otimes(r\circ \widetilde{\varphi})_!^{ad}=(s\circ \widetilde{\varphi})_!^{ad}$.
	This element is equal $(\varphi\circ p)_!^{ad}$, that
	 by Proposition \ref{compo}, is equal to $p_!^{ad}\otimes\varphi_!^{ad}$.
	This proves that $(p_!^{ad})^{-1}\otimes[\alpha(\psi_t)]
	\otimes(r\circ \widetilde{\varphi})_!^{ad}=\varphi_!^{ad}$, where the element on the left side is the one defined in \ref{!}.
\end{remark}

Now we can verify that our definition behaves well with respect to the composition of transverse maps.

\begin{proposition}
	Let $G\rightrightarrows Z$ be a Lie groupoid.
	Let $f\colon X\to Y$ and $g\colon Y\to Z$ be two smooth maps between smooth manifolds such that $g$ is transverse w.r.t. $G$ and f is transverse w.r.t. $G^g_g$. Then we have that
	\[
	(g\circ f)^{ad}_{!}=f_!^{ad}\otimes g^{ad}_!\in KK\left(C^*_r\left((G^{g\circ f}_{g\circ f})_{ad}^\circ\right),C^*_r\left(G_{ad}^\circ\right)\right).
	\]
\end{proposition}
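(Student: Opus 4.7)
The plan is to reduce the transverse composition formula to the submersion composition formula (Proposition \ref{compo}) by unpacking Definition \ref{!trans} on both sides and performing a diagram chase in KK-theory, using the algebroid splitting from Lemma \ref{iso}.

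First I would set $E_g = g^*\mathfrak{A}G$, $E_f = f^*\mathfrak{A}(G^g_g)$ and $E_{gf} = (g\circ f)^*\mathfrak{A}G$, with bundle projections $p_g, p_f, p_{gf}$ and exponential-lifted maps $\widetilde{g}, \widetilde{f}, \widetilde{gf}$ as in the paragraph preceding Definition \ref{!trans}. Each of the three shriek elements decomposes as a triple product $(p^{ad}_\bullet)^{-1}\otimes[\alpha(\psi^\bullet_t)]\otimes(r\widetilde{\bullet})^{ad}_!$, with the rightmost factor being the submersion shriek to which Proposition \ref{compo} applies.

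The key structural input is the algebroid splitting $\mathfrak{A}(G^g_g)\cong \ker(dg)\oplus g^*\mathfrak{A}G$ coming from the exact sequence \eqref{alg} in the proof of Lemma \ref{iso}. Pulled back by $f$, it yields $E_f\cong f^*\ker(dg)\oplus E_{gf}$, which equips $E_f$ with a bundle projection $\rho\colon E_f\to E_{gf}$ satisfying $p_f = p_{gf}\circ\rho$, and with a natural bundle map $\sigma\colon E_f\to E_g$ covering $f$ (built from the second-factor projection followed by the pull-back map $E_{gf}=f^*E_g\to E_g$). Both $\rho$ and the map $E_{gf}\to E_g$ are submersions, so by Propositions \ref{compo} and \ref{thom} one has $(p_f^{ad})^{-1} = (\rho^{ad}_!)^{-1}\otimes (p_{gf}^{ad})^{-1}$ with $\rho^{ad}_!$ invertible, and $\sigma$ fits into the submersion chain $E_f\xrightarrow{\sigma} E_g\xrightarrow{r\widetilde{g}} Z$ whose composition is again a submersion.

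Then I would assemble the two sides. Proposition \ref{compo} applied to this chain gives $(r\widetilde{g}\circ\sigma)^{ad}_! = \sigma^{ad}_!\otimes (r\widetilde{g})^{ad}_!$, and analogously factors $(r\widetilde{f})^{ad}_!$ through $Y$. The intertwining identity \eqref{interw} moves each $\alpha(\psi^f_t)$ and $\alpha(\psi^g_t)$ across the submersion shrieks until they act on adiabatic groupoids over the common base $E_f$. Lemma \ref{concatenation} then combines them into a single $\alpha$ attached to a concatenated path of transverse maps $E_f\times[0,1]\to Z$ going from $(g\circ f)\circ p_f$ to $r\widetilde{g}\circ\sigma$. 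The path $\psi^{gf}_t$ appearing in $(g\circ f)^{ad}_!$, once pulled back via $\rho$ and combined with the factor $(\rho^{ad}_!)^{-1}$ through Proposition \ref{compo}, yields a path with the same endpoints; one checks that the two paths are homotopic through transverse maps (by the straight-line interpolation in the $\ker(dg)$ summand followed by the exponential in the $E_{gf}$ summand) so that the resulting $\alpha$-classes agree. Both sides then collapse to $(p_{gf}^{ad})^{-1}\otimes[\alpha]\otimes(r\widetilde{gf})^{ad}_!$, which is $(g\circ f)^{ad}_!$ by Definition \ref{!trans}.

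The main obstacle is the coherence bookkeeping in the last step: one must verify that the Morita equivalences (Propositions \ref{prodMorita} and \ref{moritarestriction}), the intertwiners $\alpha$ coming from different but homotopic paths of transverse maps, and the submersion shriek compositions satisfy the necessary cocycle relations, and in particular that the homotopy between the concatenated path built from $f$ and $g$ separately and the path built directly from $g\circ f$ induces the identity on the associated $\alpha$-classes. Conceptually the statement is a diagrammatic consequence of Proposition \ref{compo}, but the diagram chase has many moving pieces.
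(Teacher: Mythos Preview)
There is a genuine gap. Your ``key structural input'' --- the splitting $\mathfrak{A}(G^g_g)\cong \ker(dg)\oplus g^*\mathfrak{A}G$ and hence $E_f\cong f^*\ker(dg)\oplus E_{gf}$ --- is lifted from the exact sequence \eqref{alg}, but that sequence is established in Lemma \ref{iso} under the hypothesis that the map is a \emph{submersion}. Here $g$ is only transverse. For a merely transverse $g$ the fibrewise projection $\mathfrak{A}(G^g_g)_y\to\mathfrak{A}_{g(y)}G$, $(\xi,\eta)\mapsto\eta$, is in general not surjective (it is surjective precisely when $\mathrm{Im}(q)\subset\mathrm{Im}(dg_y)$), and $\ker(dg)$ need not have locally constant rank, so it is not a vector bundle. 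A quick rank count confirms the obstruction: $\mathrm{rk}\,E_f=\dim Y+\mathrm{rk}\,\mathfrak{A}G-\dim Z$ while $\mathrm{rk}\,E_{gf}=\mathrm{rk}\,\mathfrak{A}G$, so your direct-sum decomposition would force $\dim\ker(dg_y)=\dim Y-\dim Z$, i.e.\ $g$ a submersion. Consequently there is no bundle projection $\rho\colon E_f\to E_{gf}$, and the factorisation $(p_f^{ad})^{-1}=(\rho^{ad}_!)^{-1}\otimes(p_{gf}^{ad})^{-1}$ on which the rest of your argument rests is not available.

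The paper sidesteps this by never attempting to split $E_f$. Instead it works only with the two pull-back bundles $E=(g\circ f)^*\mathfrak{A}G$ over $X$ and $F=g^*\mathfrak{A}G$ over $Y$ (both honest vector bundles regardless of transversality versus submersion), and introduces the \emph{fibered product} $E\times_Y F$ as the common roof. The projections $P,Q$ from $E\times_Y F$ are genuine vector-bundle projections, so Proposition \ref{thom} applies to them; the intertwining lemma \eqref{interw} and the concatenation lemma \ref{concatenation} then handle the $\alpha$-classes exactly as you anticipated, and Proposition \ref{compo} closes the argument. The moral is that the reduction to the submersion case goes through an auxiliary fibered product rather than through a splitting of the algebroid of $G^g_g$.
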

\begin{proof}
	Since the lower shriek maps for general transverse maps are defined through zig-zags, the idea of the proof is comparing two zig-zags with same starting and ending objects (the zig-zag associated to the composition of $f$ and $g$ and the concatenation of the zig-zags associated to $f$ and $g$),  by means of a third zig-zag that is directly equivalent to them, individually taken.
	
		Let $E$ and $F$ denote $(g\circ f)^*\mathfrak{A}G$ and $g^*\mathfrak{A}G$ respectively. 
		
		Consider the following diagram
\begin{equation}
		\xymatrix{ 
			E\times_Y F\ar[r]^{P}\ar[d]^Q& F\ar[d]^{q}\ar[r]^{l}& Z \\
			E\ar[d]^{p}\ar[urr]^{k}\ar[r]^{h} & Y\ar[ur]_g & \\
			X\ar[ur]_f &  &		\\	
			}
\end{equation}
where $P$ and $Q$ are the obvious projections, $h=r\circ\widetilde{f}$, $k=r\circ(\widetilde{g\circ f})$ and $l=r\circ \widetilde{g}$. 
It is commutative up to homotopy: let $h_t$ be the homotopy between $f\circ p$ and $h$, let $k_t$ be the homotopy between $(g\circ f)\circ p$ and $k$ and let $l_t$ be the homotopy between $g\circ q$ and $l$.
Moreover all the vertical arrows are vector bundle projections, therefore the lower shriek maps associated to them induce KK-equivalences.

Recall that $f_!^{ad}=(p_!^{ad})^{-1}\otimes[\alpha(h_t)]
	\otimes h_!^{ad}$ and $g_!^{ad}=(q_!^{ad})^{-1}\otimes[\alpha(l_t)]
	\otimes l_!^{ad}$ are as in Definition \ref{!trans}. Moreover let $(g\circ f)_!^{ad}$ be given by $(p_!^{ad})^{-1}\otimes[\alpha(k_t)]
	\otimes k_!^{ad}$.
Thanks to the following calculations	
\begin{equation}
\begin{split}
&(p_!^{ad})^{-1}\otimes[\alpha(h_t)]
\otimes h_!^{ad}\otimes(q_!^{ad})^{-1}\otimes[\alpha(l_t)]
\otimes l_!^{ad}=\\
&(p_!^{ad})^{-1}\otimes[\alpha(h_t)]
\otimes (Q_!^{ad})^{-1}\otimes P_!^{ad}\otimes[\alpha(l_t)]
\otimes l_!^{ad}=\\
&(p_!^{ad})^{-1}\otimes (Q_!^{ad})^{-1}\otimes[\alpha(h_t\circ Q)]
\otimes[\alpha(l_t\circ P)]\otimes P_!^{ad}
\otimes l_!^{ad}=\\
&(p_!^{ad})^{-1}\otimes (Q_!^{ad})^{-1}\otimes[\alpha(h_t\circ Q)]
\otimes[\alpha(l_t\circ P)]\otimes (l\circ P)_!^{ad}=\\
&(p_!^{ad})^{-1}\otimes (Q_!^{ad})^{-1}\otimes[\alpha((g\circ h_t\circ Q)*(l_t\circ P))]
\otimes (l\circ P)_!^{ad}=\\
&(p_!^{ad})^{-1}\otimes[\alpha(k_t)]\otimes (Q_!^{ad})^{-1}
\otimes (l\circ P)_!^{ad}=\\
&(p_!^{ad})^{-1}\otimes[\alpha(k_t)]\otimes k_!^{ad}=\\
&
 (g\circ f)_!^{ad}\\
\end{split}
\end{equation}
we obtain the desired equality.
We used:
in the second line the fact that $h\circ Q= q\circ P$ and that $q$ and $Q$ are vector bundle projections;
in the third line \eqref{interw};
in the fourth line  Proposition \ref{compo};
in the fifth line Lemma \ref{concatenation};
in the sixth line  \eqref{interw} one more time;
 finally in the seventh line  Proposition \ref{compo}.

\end{proof}

Now we are going to state another important property of this construction, which is its functoriality with respect to the restriction to the boundary of a manifold with boundary.
Before that let us notice the following facts:
let $\psi \colon Y\to X$ be transverse with respect to $G$ and let $X_1$ be a closed and saturated submanifold of $X$. Then, since $X_1$ is saturated, it follows that $dr(\mathfrak{A}_xG)\subset T_xX_1$ and then that $\psi$ and the inclusion of $X_1$ into $X$ are transverse. This implies that $Y_1:=\psi^{-1}(X_1)$ is a submanifold of $Y$;
 the transversality of $\psi$  with respect to $G$ means that $d\psi (T_yY)+dr(\mathfrak{A}_{\psi(y)}G)=T_{\psi(y)}X$. Consequently, if we consider the intersection with $T_{\psi(y)}X_1$ for $y\in Y_1$, we obtain  $d\psi (T_yY_1)+dr(\mathfrak{A}_{\psi(y)}G)=T_{\psi(y)}X_1$, 
	namely that the restriction of $\psi$ to $Y_1$ is still transverse with respect to $G$.

Let $X$ be a smooth manifold with 
 boundary $\partial X$ and let   $G\rightrightarrows X$ a Lie groupoid transverse to the boundary.
Consider a smooth function $(f,\partial f)\colon(Y,\partial Y )\to (X, \partial X)$ between manifolds with boundary transverse with respect to $G$.

Observe that
the b-groupoids
$(G(X,\partial X))_f^f$ and $G^f_f(Y,\partial Y)$ are isomorphic.
Indeed both of them restrict to $G^f_f$ over $\mathring{Y}$ and it remains to notice that $(G_{|\partial X}\times \RR)^{\partial f}_{\partial f}$ is isomorphic to $(G_{|\partial X})^{\partial f}_{\partial f}\times\RR$ just by definition.

\begin{proposition}\label{functoriality}
In the situation described above we have that the following diagram 
\begin{equation}
\xymatrix@=0.8em{
\cdots\ar[r]&K_*\left(C^*_r(((G_{|\mathring{X}})_f^f)_{ad}^\circ)\right)\ar[r]\ar[dd]^{(f_{|\mathring{Y}})_!^{ad}}&K_*\left(C^*_r(((G(X,\partial X))_f^f)_{ad}^\circ)\right)\ar[r]\ar[dd]^{f_!^{ad}}&K_*\left(C^*_r(((G_{|\partial X})^{\partial f}_{\partial f}\times\RR)_{ad}^\circ)\right)\ar[r]\ar[dd]^{ (f_{|\partial Y})_!^{ad}}&\cdots\\
& & & & \\
\cdots\ar[r]&K_*\left(C^*_r(((G_{|\mathring{X}})_{ad}^\circ)\right)\ar[r]&K_*\left(C^*_r(G(X,\partial X)_{ad}^\circ)\right)\ar[r]&K_*\left(C^*_r((G_{|\partial X}\times\RR)_{ad}^\circ)\right)\ar[r]&\cdots
	}
\end{equation}
commutes.
\end{proposition}
 \begin{proof}
It is sufficient to prove this when $f$ is a submersion, because for general transverse maps  the shriek element is the Kasparov product of the inverse of a submersion, a submersion and a Morita equivalence.
 	
 	Let us prove the commutativity of the second square:
 	we have to prove the equality of 
 	\[
 	[e^t_1]^{-1}\otimes[e_1^u]\otimes\mu_\psi\otimes[\mathrm{ev}_{\partial X}]
 	\]
 	and
 	\[
 	[\mathrm{ev}_{\partial Y}]\otimes[\bar{e}^t_1]^{-1}\otimes[\bar{e}^u_1]\otimes\mu_{\psi''}
 	\]
 	in $KK\left(C^*_r(((G(X,\partial X))_f^f)_{ad}^\circ),C^*_r((G_{|\partial X}\times\RR)_{ad}^\circ)\right)$.

 	Here $\bar{e}$ are evaluation maps for the groupoids restricted to $\partial Y$, whereas we use $e$ for evaluation maps for groupoids over $Y$.
 	Noticing that $[\mathrm{ev}_{\partial Y}]\otimes[\bar{e}^t_1]^{-1}\otimes[\bar{e}^u_1]=[e^t_1]^{-1}\otimes[e^u_1]\otimes[\mathrm{ev}_{\partial Y}]$ and applying Proposition \ref{moritarestriction}, we obtain the commutativity of the second square.
 	For the commutativity of the first one we use a similar argument.
 	
 	Now we are going to prove the commutativity of the square which involves the boundary morphisms of the exact sequences.
First we can restrict our attention to the collar neighbourhoods of the boundaries where our groupoids are isomorphic to $(G_{|\partial X}\times \Gamma([0,1),\{0\}))_{ad}^\circ$ and $((G_{|\partial X})_{\partial f}^{\partial f}\times \Gamma([0,1),\{0\}))_{ad}^\circ$.
 	
 Then recall from Remark \ref{b-boundary} that the boundary morphism of those exact sequences are  given by 
 \[
 \partial_b= [\mathrm{id}\otimes\mathrm{ev}_0]^{-1}\otimes(\mathrm{id}\otimes \beta_{ad})\otimes [\Delta].
 \]
 
Now observe that $(f_{|\partial Y})_!^{ad}=\partial f_!^{ad}\otimes \mathrm{id}$
\[
(\partial f_!^{ad}\otimes \mathrm{id})\otimes[\mathrm{id}\otimes\mathrm{ev}_0]=[\mathrm{id}\otimes\mathrm{ev}_0]\otimes (\partial f_!^{ad}\otimes \mathrm{id}) 
\]
in 
$KK\left(C^*_r(
((G_{|\partial X})^{\partial f}_{\partial f})_{ad}^\circ)\otimes C^*_r(\RR),C^*_r((G_{|\partial X})_{ad}^\circ)\otimes C^*_r(\RR_{ad})\right)$.

Moreover \[
(\partial f_!^{ad}\otimes \mathrm{id})\otimes[\mathrm{id}\otimes\beta_{ad}]=[\mathrm{id}\otimes\beta_{ad}]\otimes (\partial f_!^{ad}\otimes \mathrm{id}) 
\]
in 
$KK^1\left(C^*_r(
((G_{|\partial X})^{\partial f}_{\partial f})_{ad}^\circ)\otimes C^*_r(\RR),C^*_r((G_{|\partial X})_{ad}^\circ)\otimes C^*_r((0,1)\times(0,1)_{ad})\right)$.

Finally since $\Delta$ is a restriction with respect to the parameters of the adiabatic deformations, it is clear from the nature of $\Delta$ and the shriek construction that 

 \[
 (\partial f_!^{ad}\otimes \mathrm{id})\otimes[\Delta]=[\Delta]\otimes(f_{|\partial Y\times(0,1)})_!^{ad}
 \]
 in 
 $KK\left(C^*_r(
 ((G_{|\partial X})^{\partial f}_{\partial f})_{ad}^\circ)\otimes C^*_r((0,1)\times(0,1)_{ad}^\circ),C^*_r((G_{|\partial X}\times (0,1)\times(0,1))_{ad}^\circ)\right)$. And this complete the proof.
 \end{proof}

Finally we have the main result of this section.
\begin{theorem}\label{th-comm}
	If $\psi\colon Y\to X$ is transverse with respect to $G\rightrightarrows X$,  then the following diagram
si commutative:
\begin{equation}\label{shriek}
\xymatrix{\cdots\ar[r] & K_*\left(C^*_r(G^\psi_\psi\times (0,1))\right)\ar[r]\ar[d]^{\mu_{\psi}} &
	K_*\left(C^*_r((G^\psi_\psi)^\circ_{ad})\right)\ar[r]^{[\mathrm{ev}_0]_*}\ar[d]^{\psi_!^{ad}} & K_*\left(C^*_r(\mathfrak{A}(G^\psi_\psi))\right)\ar[r]\ar[d]^{d\psi_!}& \cdots\\
	\cdots\ar[r] &
	K_*\left(C^*_r(G\times (0,1))\right)\ar[r] &
	K_*\left(C^*_r(G^\circ_{ad})\right)\ar[r]^{[\mathrm{ev}_0]_*} & K_*\left(C^*_r(\mathfrak{A}(G))\right)\ar[r]& \cdots\\
}
\end{equation} 
where $\mu_{\psi}$ is the KK-element given in Definition \ref{morita} and $d\psi_!\in KK\big(C^*_r(\mathfrak{A}(G^\psi_\psi)),C^*_r(\mathfrak{A}(G))\big)$ is the KK-class obtained in the obvious way, as for $\psi_!^{ad}$, but restricting the process to the Lie algebroids. Furthermore, the commutativity of the diagram still holds in the KK-theory framework.
\end{theorem}

\begin{remark}\label{rmkshriek}
	Notice that one also has a wrong-way functoriality for the adiabatic deformation up to $t=1$ included.
	It is given by the same construction and it enjoys the same properties. Moreover there is a commutative diagram analogous to \eqref{shriek}
	for the exact sequence
	\[
	\xymatrix{ 0\ar[r] & C^*_r(G\times(0,1])\ar[r] & C^*_r(G_{ad})\ar[r]^{\mathrm{ev}_0} & C^*_r(\mathfrak{A}(G))\ar[r] & 0}.
	\]
	
\end{remark}

\section{Lie groupoids and secondary invariants}

 \subsection{Pseudodifferential operators on Lie groupoids}
In this section we are going to recall the definition of a pseudodifferential operator on a Lie groupoid. For more details the reader is referred to 
\cite{NWX, vas}.   
Consider the
following data:
\begin{itemize}
	\item a smooth embedding  $\theta\colon U\to \mathfrak{A}^*G$,  where $ U$ is a tubular neighbourhood of $G^{(0)}$
	in $G$, such that
	$\theta(G^{(0)})=G^{(0)}$, $(d\theta)|_{G^{0}}=\hbox{Id}$ and $\theta(\gamma)\in\mathfrak{A}^*_{s(\gamma)}G$
	for all $\gamma\in U$;
	\item a smooth compactly supported map $\phi:G\to \RR_{+}$ such that
	$\phi^{-1}(1)=G^{(0)}$;
	\item a polyhomogeneous symbol $a$ on $ \mathfrak{A}^*G$ of order $m\in\ZZ$, that is $a\sim \sum_{k=0}^{+\infty} a_{m-k}$ with $a_{j}(x,\xi)$ homogeneous of order $j$ in $\xi$.
\end{itemize}
Then a pseudodifferential $G$-operator $P$ is obtained by the
formula:
\[ Pu(\gamma)=
\int_{\gamma'\in G_{s(\gamma)}\,,\, \xi\in \mathfrak{A}^{*}_{r(\gamma)}(G)}
e^{i\theta(\gamma'\gamma^{-1})\cdot \xi}a(r(\gamma),
\xi)\phi(\gamma'\gamma^{-1})u(\gamma')d\gamma'd\xi\]
where $u\in C^\infty_c(G)$ and we have fixed a scalar product on the Lie algebroid.
If $m>0$, then we  obtain an unbounded multiplier of $C^\infty_c(G)$;
if $m=0$, the operator $P$ is an element in  the multiplier algebra of the reduced groupoid C*-algebra;
finally, if $m<0$, then $P$ lies in the reduced groupoid C*-algebra.

\begin{examples}\label{expdo}
Let us recall some examples of 0-order pseudodifferential operators on Lie groupoids.
	\begin{enumerate}
		\item If $G= X\times X\rightrightarrows X$ is the pair groupoid, where $X$ is a compact smooth manifold, then a 0-order $G$-$\Psi$DO is simply a 0-order $\Psi$DO on $X$.
		
		\item Let $p: X\to Z$ a submersion, and 
		$G=X\times_{Z}X = \{(x,y)\in X\times X\, | \,p(x)=p(y)\}$ the
		associated subgroupoid of the pair groupoid $X\times X$.  Then a 0-order $G$-$\Psi$DO is given by families $(P_z)_{\{z\in Z\}}$ of  0-order $\Psi$DOs on $p^{-1}(z)$.

		\item 
		Let $G$ be the fundamental groupoid of a compact smooth
		manifold $M$ with fundamental group $\pi_1(M,x_0)=\Gamma$. Recall that if we denote
		by $\widetilde{M}$ a universal covering of $M$ and let $\Gamma$ act by covering transformations,
		then $G^{(0)} = \widetilde{M} /\Gamma = M$, $G =  \widetilde{M}\times_{\Gamma}\widetilde{M} $ and the source and the range maps are the two
		projections. Then a 0-order $G$-$\Psi$DO  is a  properly supported
		$\Gamma$-invariant 0-order classical $\Psi$DO on the universal covering $\widetilde{M}$ of $M$.
		
		\item Let $G=E\rightrightarrows X$ be the total space of a vector
		bundle $p\colon E\to X$ over a compact smooth manifold $X$, with $r=s=p$ and $(x,v)\cdot(x,w)=(x,v+w)$.  If $P$ is a pseudodifferential $G$-operator: 
		$$ 
		Pf(v) = \int_{w\in E_x} k_P(v-w)f(w)  
		$$
		Thus, for all $x\in X$, $P_x$ is a translation-invariant convolution operator on
		the linear space $E_x$ such that the underlying distribution $k_P$ identifies with the Fourier transform of a symbol on $E$. Consequently 
		we have that  a 0-order $G$-$\Psi$DO is given by a smooth function on $BE^*$, the unit ball bundle of $E^*$.
		
		\item If $G$ is the holonomy groupoid of a foliation $\mathcal{F}$ on a smooth manifold $X$, then a 0-order $G$-$\Psi$DO  is just  a leafwise 0-order $\Psi$DO on $(X,\mathcal{F})$.
		
	\end{enumerate} 
\end{examples}

\subsection{The $\varrho$ classes for Lie groupoids} \label{section-rho}

Before giving the precise definition $\varrho$-classes as elements of the K-theory of $C^*_r(G_{ad}^\circ)$, we are going to informally explain the idea of the construction. 
 We are going to think of $K_*(C^*_r(G_{ad}^\circ))$ as a group of relative K-theory in the following sense (see \cite{SkExt} for more details).
Let $A$ and $B$ be two separable C*-algebras and let $\partial$ be an element in $KK(A,B)$.
Using for instance the realization of KK-groups \`{a} la Cuntz (see \cite{C87}), one can always find a C*-algebra $A'$ and two *-homomorphisms $\varphi\colon A'\to A$ and $\psi\colon A'\to B$ such that 
\begin{itemize}
\item $\varphi$ induces a KK-equivalence between $A'$ and $A$;
\item we can decompose $\partial$ as $[\varphi]^{-1}\otimes_{A'}[\psi]$, the Kasparov product of the inverse of the KK-equivalence $[\varphi]$ and $[\psi]$. 
\end{itemize}

Hence one can see $\partial$, up to the KK-equivalence between $A$ and $A'$, as the boundary map for the long exact sequence in KK-theory associated to the following short exact sequence
\[
\xymatrix{0\ar[r] & B\otimes(0,1)\ar[r] & C_\psi(A',B)\ar[r]^(.65){pr_{A'}} & A'\ar[r]& 0}
\]
where $C_\psi(A',B)=\{a\oplus f\in A'\oplus B[0,1)\,|\, f(0)=\psi(a)\}$ is the mapping cone C*-algebra of $\psi$. We define the relative K-theory of $\partial$ as the  K-theory of this mapping cone.

So a $\varrho$ class will be defined as a class in this K-group. More precisely, if we identify $K_*(C_\psi(A',B))$ with the KK-group $KK^*(\CC,C_\psi(A',B))$, such an element can be given by the following data:
\begin{itemize}
\item a Kasparov $\CC$-$A'$ bimodule $(\mathcal{H},F)$;
\item a Kasparov $\CC$-$B[0,1)$ bimodule $(\mathcal{E}_t,G_t)$ such that $(\mathcal{E}_0,G_0)=(\mathcal{H}\otimes_\psi B,F\otimes_\psi 1)$ and $(\mathcal{E}_1,G_1)$ is degenerate. 
\end{itemize}
See \cite[Proof of Theorem 1.1]{CunSk}.

\begin{remark}
	Of course one can equivalently work in the unbounded setting, following \cite{BJ}.
\end{remark}
%
Let us quickly recall the construction of the adiabatic index of a 0-order 
  elliptic pseudodifferential $G$-operator $P$.
Its principal symbol $\sigma$ defines a class in the group $KK(\CC,C_0(\mathfrak{A}^*(G)))$ that is isomorphic to $KK(\CC,C^*_r(\mathfrak{A}(G)))$, by means of the Fourier transform after fixing a scalar product.

We know that $ev_0\colon C^*_r(G_{ad})\to C^*_r(\mathfrak{A}(G)) $ is a KK-equivalence. Let us give an explicit description of the inverse of the map induced in KK-theory: to the symbol $\sigma\in C_0(\mathfrak{A}^*(G))$, we can associate a symbol $\sigma_{ad}$ on $\mathfrak{A}^*(G)\times[0,1]$, the Lie algebroid of the adiabatic deformation, given by $\sigma_{ad}(\xi,t):=\sigma(\xi)$; hence we obtain the unbounded regular operator $P_{ad}$ (see \cite{vas}) on the $C^*_r(G_{ad})$-module $C^*_r(G_{ad})$ defined by

\begin{equation}\label{adoperator1}
(P_{ad} f )(\gamma,t)=\int_{\xi\in\mathfrak{A}^*(G)_{r(\gamma)}}\int_{\gamma'\in G_{s(\gamma)}}e^{\frac{i\langle\exp^{-1}(\gamma'\gamma^{-1})|\xi\rangle}{t}}\chi(\gamma'\gamma^{-1})\sigma(r(\gamma),\xi)f(\gamma',t)\frac{d\xi d\gamma'}{t}
\end{equation}
for $t\neq0$ and
\begin{equation}\label{adoperator2}
(P_{ad} f )(x,V,0)=\int_{\xi\in\mathfrak{A}^*(G)_{x}}\int_{V'\in\mathfrak{A}^(G)_{x}}e^{i\langle V'|\xi\rangle}\chi(\exp(V))\sigma(x,\xi)f(x,V-V',0)d\xi dV'
\end{equation}
for $t=0$, with $f\in C^\infty_c(G_{ad},\Omega^{\frac{1}{2}})$ (notice that at $t=1$ we obtain $P$ and that $P_{ad}$ is a deformation of $P$ to the Fourier transform of its symbol).

Here we have chosen an exponential map $\exp\colon U\to W$, from a neighbourhood of the zero section in the algebroid $\mathfrak{A}(G)$ to a tubular neighbourhood $W$ of $X$ in $G$, and a cut-off function $\chi$ with support in $W$. Furthermore we can also make this construction with coefficients in any vector bundle $E$ over $G$.
The operator $P_{ad}$ defines a class in $KK(\CC,C^*_r(G_{ad}))$ such that the evaluation at $0$ gives the class of $\sigma$ and the evaluation at $1$  gives the analytic $G$-index of $P$ $\mbox{Ind}_G(P)=[\sigma(P)]\otimes[ev_0]^{-1}\otimes[ev_1]$ in the group $KK(\CC, C^*_r(G))$.

Notice that $C^*_r(G_{ad}^\circ)$ is isomorphic to the mapping cone $C_{ev_1}(C_r^*(G_{ad}),C^*_r(G))$ of the evaluation at $1$.
Indeed if $(f_t)_{t\in[0,1]}$ is an element in $C^*_r(G_{ad}^\circ)$, then the pair 
$((f_{t/2})_{t\in[0,1]}, (f_{(t+1)/2})_{t\in[0,1]} )$ is the image of $(f_t)_{t\in[0,1]}$ in $C_{ev_1}(C_r^*(G_{ad}),C^*_r(G))$ through this isomorphism.

 Hence, if $P_t$ is a path of  $G$-operators such that $P_0=P$ and $P_1$ is invertible,
 through the above isomorphism, we obtain a class in $KK(\CC,C^*_r(G_{ad}^\circ))$.
This will be the home of the secondary invariants that we will study in the following sections. They are called secondary because they arise when the index, the primary invariant, vanishes.
\begin{definition}\label{rhogr}
	In the situation described above, let us denote the $\varrho$-invariant associated to $\sigma$ and $P_t$ as 
	\[
	\varrho(\sigma, P_t)\in KK(\CC,C^*_r(G_{ad}^\circ)).
	\]
\end{definition}

\begin{remark}\label{ubrho}
	
	If $P$ is not bounded and it is homotopic to an invertible operator through a path $P_t$, then we can construct a $\varrho$-class in the following way.
	Let $\sigma$ be the symbol of $P$, consider the symbol $\sigma_{ad}$ as above and construct the unbounded $G_{ad}$-operator $P_{ad}$.
	This operator and the path $P_t$ fit together and we obtain an unbounded 
$C^*_r(G_{ad})$-operator $P'_{ad}$.

	Let $\psi(s)=s\cdot(1+s^2)^{\frac{1}{2}}$ and observe that 
	$\psi_t(s):=\psi(\frac{s}{1-t})$ is a path of continuous functions, such that $\psi_0=\psi$ and $\psi_t(s)$ goes to $\mathrm{sign}(s)$ when $t$ goes to $1$.
	
	Now, since $P_1$ is invertible and there is a gap in its spectrum near zero, the concatenation of $\psi(P_{ad}')$ and $\psi_s(P_1)$, suitably parametrized,
	gives a bounded Fredholm $G_{ad}^\circ$-operator and we denote its class in $KK^*(\CC,C^*_r(G_{ad}^0))$ by
	$\varrho(\sigma,P_t)$.

\end{remark}

\subsection{Cobordism relations}

In this section we are going to investigate the relation between the $\varrho$-invariants associated to two cobordant Lie groupoids. 
Let $W$ be a smooth manifold with boundary $\partial W$ and let $G(W,\partial W)\rightrightarrows W$ be the $b$-groupoid of a Lie groupoid $G$ transverse to the boundary, as in Definition \ref{bcalculus}.
Let $P$ be an elliptic pseudodifferential $G(W,\partial W)$ operator and denote its restriction to the boundary by $P^{\partial}$ (this is a $G_{|\partial W}\times \RR$-operator).
\begin{definition}
Assume that there exists a homotopy $P^\partial_t$ from $P^\partial_0=P^{\partial}$ to an invertible operator $P^{\partial}_1$. 
Then we obtain the following classes:
\begin{itemize}
	\item a secondary invariant $\rho(P^{\partial}_t)\in K_*\left(C^*_r((G_{|\partial W})_{ad}^\circ\times \RR)\right)\simeq K_{*+1}\left(C^*_r((G_{|\partial W})_{ad}^\circ)\right)$;
	\item a class $[P_{ad}^{\mathcal{T}}]\in K_*\left(C^*_r(\mathcal{T}_{nc}G(W,\partial W))\right)$ , defined by the symbol of $P$ and the homotopy $P^\partial_t$;
	recall by Definition \ref{nctangent}
	that $\mathcal{T}_{nc}G(W,\partial W)$ is the union of $\mathfrak{A}(G_{|\mathring{X}})$, where we define the symbol, and $(G_{|\partial X})_{ad}^{\circ}\times \RR $, where the secondary invariant is defined; they glue together at $\mathfrak{A}(G_{\partial X})$ by definition;
	\item
	indeed, using the KK-equivalence in the Lemma \ref{bkk}, we can extend it to a class $[P_{ad}^{\mathcal{F}}]$ in $ K_*\left(C^*_r(G(W,\partial W)_{ad}^{\mathcal{F}})\right)$, whose restriction to the boundary is  $\rho(P^{\partial}_t)$; 
	\item  finally we get a class  $\mbox{Ind}(P,P^\partial_t)\in K_*(C^*_r(G_{|\mathring{W}}))$. This is the generalized Fredholm index of $P$ associated to the perturbation on the boundary $P^\partial_t$, obtained as the Kasparov product $[P_{ad}^{\mathcal{F}}]\otimes [ev_1]$, see for instance \cite{PZ} for a general treatment of Fredholm index of fully elliptic operators in the context of Lie groupoids.
\end{itemize}
\end{definition}
The following elementary result is useful to prove the main formula of this section.

\begin{lemma}\label{deloc}
Let
	\begin{itemize}
		\item[1)] $\xymatrix{0\ar[r]& J_B\ar[r]& A\ar[r]^\beta& B\ar[r]& 0}$,
		\item[2)] $\xymatrix{0\ar[r]& J_C\ar[r]& A\ar[r]^\gamma& C\ar[r]& 0}$,
	\end{itemize}
	be exact sequences of C*-algebras.
	Assume that $J_B+J_C=A$. 	
We have the following exact sequences:
	\begin{itemize} 
		\item[3)] $\xymatrix{0\ar[r]& J\ar[r]& J_C\ar[r]& B\ar[r]& 0}$,
		\item[4)] $\xymatrix{0\ar[r]& J\ar[r]& J_B\ar[r]& C\ar[r]& 0}$,
		\item[5)] $\xymatrix{0\ar[r]& J\ar[r]& A\ar[r]^(.4){\beta\oplus\gamma}& B\oplus C\ar[r]& 0}$,
	\end{itemize}
	
	where $J=J_B\cap J_C$.
	Let $\partial_B$ and $\partial_C$ be the boundary homomorphisms associated to the exact sequences 3) and 4) respectively. Then $\partial$, the boundary homomorphism associated to the exact sequences 5), is such that 
	\[
	\partial\colon x\oplus y\mapsto \partial_B(x)+\partial_C(y)
	\]
	where $x\in K_n(B)$, $y\in K_n(C)$ and $\partial_B(x)+\partial_C(y)\in K_{n+1}(J)$.

	Moreover if $\beta$ and $\gamma$ admit completely positive sections, we have that 
	$\partial= p_B^*\partial_B+p_C^*\partial_C\in KK^1(B\oplus C, J)$, where $p_B$ and $p_C$ are the projections from $B\oplus C$ to $B$ and $C$ respectively and $\partial_B$ and $\partial_C$ are elements of $KK^1(B,J)$ and $KK^1(C,J)$ respectively.
\end{lemma}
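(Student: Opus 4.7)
The plan is to first derive the three exact sequences from the two given ones using $J_B + J_C = A$, then compute the boundary in sequence 5 by naturality applied to the obvious morphisms of extensions, and finally upgrade the K-theoretic equality to a KK-theoretic one by producing completely positive sections.

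For the exactness of sequences 3 and 4: the restriction $\beta|_{J_C}\colon J_C \to B$ has kernel $J_B \cap J_C = J$, and given $b \in B$ I can lift to $a \in A$, decompose $a = j_B + j_C$ by hypothesis, and observe that $\beta(j_C) = \beta(a) = b$ since $\beta(j_B) = 0$; sequence 4 is symmetric. For sequence 5, the kernel of $\beta \oplus \gamma$ is visibly $J$, while surjectivity follows from sequences 3 and 4: given $(b,c)$, pick $a_1 \in J_C$ with $\beta(a_1) = b$ and $a_2 \in J_B$ with $\gamma(a_2) = c$, and $(\beta \oplus \gamma)(a_1 + a_2) = (b,c)$.

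Next I would compute $\partial$ by naturality. The inclusion $J_C \hookrightarrow A$, combined with $\mathrm{id}_J$ on the left and $i_B\colon B \to B \oplus C$, $b \mapsto (b,0)$, on the right, is a morphism from sequence 3 to sequence 5; the essential square commutes because $\gamma$ annihilates $J_C$, so $(\beta \oplus \gamma)(j_C) = (\beta(j_C), 0) = i_B(\beta(j_C))$. Naturality of the connecting map gives $\partial \circ (i_B)_* = \partial_B$, and symmetrically $\partial \circ (i_C)_* = \partial_C$. Since every element of $K_n(B \oplus C)$ decomposes as $(i_B)_*(x) + (i_C)_*(y)$, additivity of $\partial$ yields the claimed formula $\partial(x \oplus y) = \partial_B(x) + \partial_C(y)$.

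For the KK-theoretic statement, the point is to check that all three quotient maps refine to KK-classes, i.e.\ admit completely positive sections. For $\beta \oplus \gamma$, Lemma \ref{cplift} applies directly with $I_1 = J_B$, $I_2 = J_C$: by hypothesis $A \to A/J_B = B$ and $A \to A/J_C = C$ admit cp sections, hence so does $A \to A/J \cong B \oplus C$. For $\beta|_{J_C}\colon J_C \to B$, compose a cp section $s$ of $\beta \oplus \gamma$ with $i_B$; since $\gamma(s(b,0)) = 0$, the image lies in $J_C$, so this is indeed a cp section. Similarly for $\gamma|_{J_B}$. Once $\partial_B, \partial_C, \partial$ are all elements of KK-theory, the K-theoretic identity above combined with the universal property of KK — i.e.\ applying it after Kasparov-multiplying with an arbitrary class in $KK(D, B \oplus C)$ — yields the equality $\partial = p_B^*\partial_B + p_C^*\partial_C$ in $KK(B \oplus C, J)$.

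The main subtle point is the verification that completely positive sections propagate from the two given extensions to the three derived ones; everything else is routine diagram chasing. Once this is in place, the naturality argument carries over verbatim to the KK level, since the morphisms $i_B$ and $i_C$ are genuine $*$-homomorphisms.
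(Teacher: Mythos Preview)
Your proposal is correct and follows essentially the same approach as the paper: both arguments use the morphisms of extensions $J_C \hookrightarrow A$ (resp.\ $J_B \hookrightarrow A$) together with $i_B$ (resp.\ $i_C$) and invoke naturality of the connecting map to obtain $\partial\circ (i_B)_* = \partial_B$ and $\partial\circ (i_C)_* = \partial_C$, then add. The paper's proof is extremely terse (it dispatches the KK statement with ``the second part is obvious''), whereas you supply the missing details --- in particular the observation that a completely positive section $s$ of $\beta\oplus\gamma$ automatically restricts to cp sections for sequences 3 and 4 because $s\circ i_B$ lands in $\ker\gamma = J_C$. One small remark: your sentence about ``the universal property of KK'' is not quite the right justification for the KK-level equality; the clean argument is the one you give in your final paragraph, namely that once all three extensions are semi-split the naturality of the boundary holds already in KK, and then $\partial = [p_B]\otimes\partial_B + [p_C]\otimes\partial_C$ follows from $[i_B]\otimes\partial = \partial_B$, $[i_C]\otimes\partial = \partial_C$ and the splitting $KK(B\oplus C,J)\cong KK(B,J)\oplus KK(C,J)$.
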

\begin{proof}

By the following commutative diagram
		\[
		\xymatrix{ 0\ar[r]& J\ar[r]\ar[d]& J_C\ar[r]\ar[d]& B\ar[r]\ar[d]& 0\\
			0\ar[r]& J\ar[r]& A\ar[r]^(.4){\beta\oplus\gamma}& B\oplus C\ar[r]& 0}
		\]
we deduce that $\partial(x\oplus0)=\partial_B(x)$.
By the following one
	\[
	\xymatrix{ 0\ar[r]&  J\ar[r]\ar[d]&  J_B\ar[r]\ar[d]&  C\ar[r]\ar[d]& 0\\
		0\ar[r]& J\ar[r]& A\ar[r]^(.4){\beta\oplus\gamma}& B\oplus C\ar[r]& 0}
	\]
we deduce that $\partial(0\oplus y)=\partial_C(x)$.
	Then we obtain that $\partial\colon x\oplus y\mapsto \partial_B(x)+\partial_C(y)$.
	The second part follows from the previous calculations.
\end{proof}

We want to apply this Lemma to the following situation.
We are interested in the following C*-algebras:
\[A=C^*_r\left(G(W,\partial W)_{ad}^{\mathcal{F}}\right)\,,\, B=C^*_r\left(G_{|\mathring{W}}\right)\,,\, 
C=C^*_r\left((G_{|\partial W})_{ad}^\circ\times \RR\right).
\]
We obtain the following diagram of exact sequences (see Lemma \ref{montexact} and the proof of Lemma \ref{bkk}):
\[
\xymatrix{ & 0\ar[d] & 0\ar[d] & & \\
	0\ar[r] &  C^*_r\left((G_{|\mathring{W}})_{ad}^\circ\right)\ar[r]\ar[d] & C^*_r\left((G_{|\mathring{W}})_{ad}\right)\ar[d]\ar@/^1pc/[dr]& & \\
	0\ar[r] &  C^*_r\left(G(W,\partial W)_{ad}^{0}\right)\ar[r]\ar@/_1pc/[dr] & C^*_r\left(G(W,\partial W)_{ad}^{\mathcal{F}}\right)\ar[d]\ar[r]& C^*_r\left(G_{|\mathring{W}}\right)\ar[r]\ar[d] & 0 \\
	& & C^*_r\left((G_{|\partial W})_{ad}^\circ\times\RR\right)\ar[d]\ar[r]& 0 & \\
	& & 0 & & 
}
\]
that fits with the hypothesis of Lemma \ref{deloc}. Here the quotient arrows are given by restriction and the other arrows are given by inclusions. 
Using the notations in the Lemma, we have that
\begin{itemize}
	\item $\partial_B\colon K_{*+1}\left(C^*_r\left(G_{|\mathring{W}}\right)\right)\to K_{*}\left(C^*_r\left((G_{|\mathring{W}})_{ad}^\circ\right)\right)$ is
	given by $b\mapsto b\otimes \mbox{Bott}\otimes[i]$, where $i\colon C^*_r\left(G_{|\mathring{W}}\times(0,1)\right)\to C^*_r\left((G_{|\mathring{W}})_{ad}^\circ\right)$ is the obvious inclusion;
	this is true because, up to Bott isomorphism, this exact sequence corresponds to the mapping cone exact sequence associated to the *-homomorphism $\mathrm{ev}_1\colon C^*_r((G_{|\mathring{W}})_{ad})\to C^*_r(G_{|\mathring{W}})$;

	\item $\partial_C\colon K_{*+1}\left(C^*_r\left((G_{|\partial W})_{ad}^\circ\times\RR\right)\right)\to K_{*}\left(C^*_r\left((G_{|\mathring{W}})_{ad}^\circ\right)\right)$ is as in Remark \ref{b-boundary}.
\end{itemize}

\begin{theorem}[Delocalized APS index theorem for Lie groupoids]\label{APSdelocgr}
Consider the following C*-algebras
	\[A=C^*_r\left(G(W,\partial W)_{ad}^{\mathcal{F}}\right)\,,\, B=C^*_r\left(G_{|\mathring{W}}\right)\,,\, 
	C=C^*_r\left((G_{|\partial W})_{ad}^\circ\times \RR\right).
	\]
	Using the notations of Lemma \ref{deloc}, we have the following equality
	\[
	\partial_C\left(\varrho(P^{\partial}_t)\right)=-\partial_B\left(\mathrm{Ind}(P,P^\partial_t)\right)\in K_{*+1}\left(C^*_r((G_{|\mathring{W}})_{ad}^\circ)\right).
	\]
\end{theorem}
\begin{proof}
	Since the pair $\mathrm{Ind}(P,P^\partial_t)\oplus \varrho(P_t^\partial)\in K_*(B\oplus C)$ is the image of $[P^\mathcal{F}_{ad}]\in K_*(A)$, by the exactness of the associated exact sequence $\partial(\mathrm{Ind}(P,P^\partial_t)\oplus \varrho(P_t^\partial))=0$.
Then the formula is an easy consequence of Lemma \ref{deloc}.
\end{proof}

\begin{remark}\label{cylinder}
	If $W=X\times[0,1]$ and $G(W,\partial W)=G\times\Gamma([0,1],\{0,1\})$, then the boundary map in KK-theory associated to the following exact sequence
	\begin{equation}\label{eqcyl}
	\xymatrix{0\ar[r]& C^*_r\left(G\times (0,1)\times(0,1 )\right)\ar[d]&  \\
		& C^*_r\left(G\times\Gamma([0,1],\{0,1\})\right)\ar[r] & C^*_r\left(G\times\RR\times\{0\}\right)\oplus C^*_r\left(G\times\RR\times\{1\}\right)  \ar[r] & 0}
	\end{equation}
	is given by 
	\begin{equation}\label{apsdeloccyl}
	\partial( x_0\oplus x_1)= x_0\otimes \mbox{Bott}_0^{-1}+x_1\otimes\mbox{Bott}_1^{-1},
	\end{equation}
	where $\mbox{Bott}_i$ is the Bott element for  $C^*_r\left(G\times\RR\times\{i\}\right)$, defined as the boundary map in Lemma \ref{cone}.
\end{remark}

\subsection{The Signature operator}

In \cite{HilSk} the authors give a direct proof of the fact that the K-theory classes of the higher signatures are homotopy invariant. They also prove it in the case of foliations, using a method that can be easily presented in a more abstract way for any Lie groupoid.

Let $G\rightrightarrows X$ be a Lie groupoid. Assume that  its Lie algebroid is oriented of rank $n$ and let  $\Lambda_\CC\mathfrak{A}^*(G)$ be the exterior algebra of $\mathfrak{A}^*(G)$. We can construct a right $C^*_r(G)$-module $\mathcal{E}(G)$ as the completion of $C_c^{\infty}(G,\Lambda_\CC\ker ds^*\otimes s^*\Omega^{\frac{1}{2}}(\mathfrak{A}(G)))$.
Furthermore we can define the following $C^*_r(G)$-valued quadratic form 
\begin{equation}\label{quadratic}Q(\xi,\zeta)(\gamma)= m_*\left(\overline{ p_1^*(i^*\xi)}\wedge p_2^*\zeta\right)(\gamma),\end{equation}
where  $m,p_k\colon G^{(2)}\to G$ are such that $m\colon (\gamma_1,\gamma_2)\mapsto \gamma_1\cdot\gamma_2$ and $p_k\colon (\gamma_1,\gamma_2)\mapsto \gamma_k$ (and $i$ is the inversion map of the groupoid). 
If $T\in\mathcal{L}(\mathcal{E}(G))$, let us denote $T'$ its adjoint with respect to the quadratic form $Q$ (i.e. $Q(T\xi,\zeta)=Q(\xi,T'\zeta)$ for any $\xi,\zeta\in \mathcal{E}(G)$).

The quadratic form $Q$ is regular in the sense of \cite[Definition 1.3]{HilSk}, by means of the operator $T$, given by
\[
T\alpha= i^{-\partial(\alpha)(n-\partial(\alpha))}*\alpha,
\]
where $*$ is the Hodge operator of $\ker ds^*$ associated to a smooth hermitian structure on it.

Consider the $s$-fiberwise exterior derivative operator  on $C_c^\infty\left(G,\Lambda_\CC\ker ds^*\otimes s^*\Omega^{\frac{1}{2}}(\mathfrak{A}(G))\right)$, that is closable. Let us still denote with $d_0$ its closure: it defines a regular operator on $\mathcal{E}(G)$.
We have that $\mbox{Im}d_0\subset\mbox{dom}d_0$ and then that $d_0^2=0$.
Now put $d\xi=i^{\partial\xi}d_0\xi$. 
The  regular operator $D_G=d+d^*$ is an elliptic and self-adjoint differential $G$-operator on $\mathcal{E}(G)$.
\begin{definition}
	The class $[\mathcal{E}(G),D_G]\in K_*(C^*_r(G))$, defined as in \cite{BJ}, is the analytic $G$-signature of $X$.
\end{definition}

A morphisms of Lie groupoids $\varphi\colon G\to H$ between Lie groupoids with oriented Lie algebroids is said to be oriented if its differential induces an orientation preserving map between the two algebroids.

\begin{definition}\label{homotopy}
	Let $G\rightrightarrows X$ and $H\rightrightarrows Y$ be two Lie groupoids.
	A morphism $\varphi\colon H\to G$ is a oriented homotopy equivalence  if there are an oriented morphism $\psi\colon G\to H$ and maps
	$T\colon X\times[0,1]\to G$ and $S\colon Y\times [0,1]\to H$, such that 
	\begin{itemize}
				\item $r\circ T_t(x)$ is equal to $x$ for all $t$ and $T_0= id_X$;
		\item for any $x\in X$ we have that $s(T_1(x))=\varphi\circ\psi(x)$,
		\item for any $\gamma\in G^x_{\varphi\circ\psi(x)}$ we have that $T_1(x)\cdot(\varphi\circ\psi(\gamma))=\gamma\cdot T_1(\varphi\circ\psi(x))$,

	\end{itemize}
	and similarly for $S$ and $\psi\circ\varphi$.
\end{definition}

\begin{remark}
	Indeed this is a strong equivalence of groupoids, see \cite[5.4]{MM}, with natural transformations homotopic to identities. In particular this implies that 
	$H=G^\varphi_\varphi$, see \cite[Proposition 5.11]{MM}.
\end{remark}

Of course a homotopy equivalence $\varphi$ between two groupoids gives a Morita equivalence, whose  imprimitivity bimodule is given by $\mu_\varphi$ as in Subsection \ref{pb}.

Now let us fix a oriented homotopy equivalence $\varphi$ between $H$ and $G$.
Consider the Lie groupoid $L:=G^{\varphi\cup\mathrm{id}_X}_{\varphi\cup\mathrm{id}_X}\rightrightarrows Y \cup X$
 and  the $C^*_r(L)$-module  $\mathcal{E}(L)$, that is the completion of $C_c^\infty(L, \Lambda_\CC\ker ds^*\otimes s^*\Omega^{\frac{1}{2}}(\mathfrak{A}(L)))$.
 We can see an element in $\mathcal{E}(L)$ as a $2\times2$ matrix in $\begin{pmatrix}\mathcal{E}(G)&\mathcal{E}(G_\varphi)\\\mathcal{E}(G^\varphi) &\mathcal{E}(G^\varphi_\varphi)\end{pmatrix}$,
 where the notation is self-explenatory.
 Then the $L$-operator $d_L$ given by the exterior derivative is a matrix $\begin{pmatrix}d_G & 0\\0 &-d_{G^\varphi_\varphi}\end{pmatrix}$.
 
 	Put $(\mathcal{E}_1,Q_1,D_1)=(\mathcal{E}(G_\varphi) \oplus\mathcal{E}(G),Q, d_G)$ and $(\mathcal{E}_2,Q_2,D_2)=(\mathcal{E}(G^\varphi_\varphi)\oplus\mathcal{E}(G^\varphi),Q, d_{G^\varphi_\varphi})$.
 	We want to construct an operator in $\mathcal{L}(\mathcal{E}_1,\mathcal{E}_2)$ that satisfies the hypotheses of \cite[Lemma 2.1]{HilSk}. The following material is from \cite{HilSk} and \cite[Section 2]{Wahl}.

 		Let $E$ be a vector bundle over $Y$ such that $E\oplus\varphi^*\mathfrak{A}G= Y\times \RR^k$ and let $\pi\colon Y\times \RR^k\to\varphi^*\mathfrak{A}G$ be the projection.
 			Let $I$ denote the open interval $(-1,1)\subset\RR$.
 		Set $p\colon Y\times I^k\to X$ as the map given by
 		\[
 		p\colon(y,\underline{t})\mapsto r(\exp_{\varphi(y)}(\pi(\underline{t}))).
 		\]
 		Notice that
 	since $\varphi$ is transverse, $p$ is a submersion, 
 		 moreover $p_{|Y\times\{0\}}=\varphi$.
 		
 		\begin{remark} 
 		
 			Then observe that
 			$G^{p}$ is isomorphic to $G^\varphi\times I^k$ through te map
 			\[
 			(y,\underline{t},\gamma)\mapsto (y,\exp_{\varphi(y)}(\pi(\underline{t}))^{-1}\cdot \gamma,\underline{t}),
 			\]  and $G^p_\varphi$ is isomorphic to $G^\varphi_\varphi\times I^k$ in a similar way ($G^p_\varphi$ is defined as $(Y\times I^k){}_p\times_r G{}_s\times_\varphi Y$).
 			
 		Moreover consider the following two natural maps
 		\begin{itemize}
 			\item $p\colon G^p_\varphi\cup G^{p}\to G_\varphi\cup G$;
 			\item $q\colon G^p_\varphi\cup G^{p}\to G_\varphi^\varphi\cup G^\varphi$;
 		\end{itemize}
 		they are restrictions of Lie groupoids morphisms which are submersive.
 	
 	Because of that, the pull-back of forms through $p$   extends to a  bounded and adjointable operator  from $\mathcal{E}(G)\oplus\mathcal{E}(G_\varphi)$ to $\mathcal{E}(G^p)\oplus\mathcal{E}(G^p_\varphi)$. Furthermore the push-forward of forms through $q$ also extend to a bounded and adjointable operator from $\mathcal{E}(G^p)\oplus\mathcal{E}(G^p_\varphi)$ to $\mathcal{E}(G^\varphi)\oplus\mathcal{E}(G^\varphi_\varphi)$ and  it corresponds to the integration of forms over $I^k$.
 		\end{remark}

 		\begin{definition}
 			Let $v$  be a smooth volume form on $I^k$ of volume 1. Then define 
 	$$\mathcal{T}_v(p)\colon \mathcal{E}(G)\oplus\mathcal{E}(G_\varphi)\to \mathcal{E}(G^\varphi)\oplus\mathcal{E}(G^\varphi_\varphi)$$
 	as $\xi\mapsto q_*(v\wedge p^*(\xi))=\int_{I^k}v\wedge p^*(\xi)$.
 		\end{definition}
 		
 		\begin{remark}\label{HSa}
 			Let us state some properties of $\mathcal{T}_v(p)$.
 			\begin{enumerate}
 				
 			\item Since $v$ is smooth we have that $\mathcal{T}_v(p)Dom (d_G)\subset Dom(d_{G^\varphi_\varphi})$.
 			\item  Moreover by the standard properties of the pull-back and the push-forward and the fact that $v$ is closed of volume 1, we have that
 			$\mathcal{T}_v(p)\circ d_G= d_{G^\varphi_\varphi}\circ\mathcal{T}_v(p)$. 
 			\item There is a stabilization process, namely $\mathcal{T}_v(p)=\mathcal{T}_{v\wedge w}(p\times 1)$, where $p\times 1\colon Y\times I^k\times I^l\to X$ is the obvious map and $w$ is a volume form on $I^l$ of volume 1.
 			\item Let $p_t\colon Y\times I^k\to X$ be a path of maps as above, that is $p_t$ is a submersion and  the restriction  of $p_t$ to  $Y\times\{0\}$ is equal to $\varphi$ for all $t\in [0,1]$. 
 			Let $s_p$ be the operator such that $\xi\mapsto\int_0^1\iota_{\partial_t}\alpha(p_t)^*p_t^*\xi dt$, where $\alpha_t$ is the isomorphism of Lie groupoids defined in Lemma \ref{isopull}. By standard arguments we have that 
 			$\alpha(p_t)^*p_1^*-p_0^*= d_{G_{p_0}^{p_0}}s_p+s_p d_G$.
 			Now taking the wedge with the volume form and integrating over $I^k$, we get the following formula
 			\begin{equation}
 			\mathcal{T}_v(p_1)(\xi)-\mathcal{T}_v(p_0)(\xi)= d_{G^\varphi_\varphi}\int_{I^k} v\wedge s_p(\xi)+\int_{I^k }v\wedge s_p(d_G\xi).
 			\end{equation}
 			Observe that we have  used the fact, by the construction of $\alpha(p_t)$, that
 			$$\int_{I^k}v\wedge \alpha(p_t)^*p_1^*(\xi)=\int_{I^k} \alpha(p_t)^*(v\wedge p_1^*(\xi))=\int_{I^k} v\wedge p_1^*(\xi).$$
 			
 			\end{enumerate}
 		\end{remark}
 		
 		\begin{lemma}\label{HSb}
 			If $\varphi$ is a homotopy equivalence, then 	$\mathcal{T}_v(p)$ induces an isomorphism from 
 			$\mathrm{Ker}\, d_G/\mathrm{Im}\, d_G$ to $\mathrm{Ker} \,d_{G^\varphi_\varphi}/ \mathrm{Im} \,d_{G^\varphi_\varphi}$.
 		\end{lemma}
 		\begin{proof}
Let $\psi$ be the homotopy inverse of $\varphi$ as in Definition \ref{homotopy}. 		Let $p'\colon X\times I^l\to Y$ be the submersion associated to $\psi$ as above with $p'_{|X\times\{0\}}=\psi$ and let $w$ be a volume form on $I^l$ of volume 1.
Then we have that
\begin{equation*}
\begin{split}
\mathcal{T}_w(p')\circ \mathcal{T}_v(p)(\xi)&= \int_{I^l}w\wedge (p')^*\left(\int_{I^k}v\wedge p^*(\xi))\right)=\\
&=\int_{I^{l}\times I^k}w\wedge v\wedge (p\circ (p'\times 1))^*(\xi)=\\
&=\mathcal{T}_{w\wedge v}(p\circ (p'\times 1)).
\end{split}
\end{equation*}
Since $p\circ (p'\times 1)(x,0,0)=\varphi\circ \psi(x)$, then $p\circ (p'\times 1)$ is homotopic to the identity through morphisms of Lie groupoids which are submersive in the $s$-direction (after possibly stabilizing).
Then, using point 3) and 4) of Remark \ref{HSa}, we obtain that $\mathcal{T}_w(p')\circ \mathcal{T}_v(p)$ is the identity up to boundaries. The same is true for $ \mathcal{T}_v(p)\circ\mathcal{T}_w(p')$.
 \end{proof}
 	
 	\begin{lemma}\label{HSc}
 		There exists a bounded operator $\mathcal{Y}$ of degree $-1$ on $\mathcal{E}(G)\oplus\mathcal{E}(G_\varphi)$ such that
 		\begin{equation}
 		1+\mathcal{T}_v(p)'\mathcal{T}_v(p)= d_G\mathcal{Y}+\mathcal{Y}d_G.
 	   \end{equation}
 	   Furthermore $\mathcal{Y}Dom(d_G)\subset Dom(d_G)$.
 	\end{lemma}
\begin{proof}
	Put $Z:=G^{\varphi}_\varphi\cup G^\varphi$
	For $i=1,2$ define $q_i\colon Z\times I^k\times I^k\to Z\times I^k$ as $(\gamma,\underline{t}_1,\underline{t}_2)\mapsto(\gamma,\underline{t}_i)$. For  $\alpha,\beta$ forms with compact supports on $I^k$ and $\xi\in \mathcal{E}(G^\varphi)\oplus\mathcal{E}(G^\varphi_\varphi)$ it holds, for instance,
	$$(q_2)_*(\xi\wedge\alpha\wedge\beta)=\xi\wedge \left(\int_{I^k}\alpha\right)\wedge\beta.$$
	
	Notice that the adjoint of the map $\mathcal{S}_v\colon \mathcal{E}(G^\varphi\times I^k)\oplus\mathcal{E}(G^\varphi_\varphi\times I^k)$, given by $\xi\mapsto\int_{I^k}v\wedge\xi$, equals $\mathcal{S}_v^*(\xi)=(*v)\wedge\xi$, so that $\mathcal{S}_v'(\xi)=v\wedge\xi$.
	Thus $$\mathcal{S}_v'\mathcal{S}_v(\xi)=v\wedge\left(\int_{I^k}v\right)\wedge \xi=(q_1)_*\left((q_1^*v)\wedge(q_2^*v )\wedge(q_2^*\xi)\right).$$
	
	Define on $ \mathcal{E}(G^\varphi\times I^k)\oplus\mathcal{E}(G^\varphi_\varphi\times I^k)$
	$$\mathcal{Q}_v\colon \xi\mapsto (q_1)_*\left((q_1^*v)\wedge(q_2^*v )\wedge(q_1^*\xi)\right)= v\wedge\xi.$$
	
  Now choose a homotopy of submersions $q\colon Z\times I^k\times I^k\to Z\times I^k$ from $q_1$ to $q_2$ such that $q(\{\gamma\}\times I^k\times I^k)\subset \{\gamma\}\times I^k$ for $\gamma\in Z$.
Then for $\xi\in\mathcal{E}(G^\varphi\times I^k)\oplus\mathcal{E}(G^\varphi_\varphi\times I^k)$
$$
(\mathcal{S}_v'\mathcal{S}_v-\mathcal{Q}_v)(\xi)=(q_1)_*\left((q_1^* v)\wedge(q_2^*v)\wedge(d_{Z\times I^k\times I^k}s_q(\xi)+s_q(d_{Z\times I^k}\xi)\right).
$$
On $\mathcal{E}(G^\varphi)\oplus\mathcal{E}(G^\varphi_\varphi)$ define $\mathcal{Q}_v(p)\colon \xi\mapsto p_*(v\wedge p^*\xi)$. Since $\mathcal{T}_v(p)=\mathcal{S}_v\circ p^*$, we obtain that 
\begin{equation*}
\begin{split}
&(\mathcal{T}_v(p)'\mathcal{T}_v(p)-\mathcal{Q}_v(p))(\xi)=p_*(\mathcal{S}_v'\mathcal{S}_v-\mathcal{Q}_v)(p^*\xi)=\\
&=d_{G^{\varphi}_\varphi}(p_*(q_1)_*((q_1^* v)\wedge(q_2^*v)\wedge s_q(p^*\xi))+p_*(q_1)_*((q_1^* v)\wedge(q_2^*v)\wedge s_q(p^*d_G\xi).
\end{split}
\end{equation*}
It remains to show that $-\mathcal{Q}_v(p)$ is the identity. For that aim, notice that $p\colon (G^\varphi_\varphi\cup G^\varphi)\times I^k\to G_\varphi\cup G$ is the composition of $\varphi\times id\colon (G^\varphi_\varphi\cup G^\varphi)\times I^k\to (G_\varphi\cup G)\times I^k$ and $\overline{q}\colon (G_\varphi\cup G)\times I^k\to (G_\varphi\cup G)$.
First observe that 
$$Q_{G^\varphi_\varphi\times I^k}((\phi\times id)^*\alpha,(\phi\times id)^*\beta)= Q_{G\times I^k}(\alpha,\beta)$$
where $Q_{G^\varphi_\varphi\times I^k}$ and $Q_{G\times I^k}$ are the $C^*_r(G^\varphi_\varphi\times I^k)$-valued and the $C^*_r(G\times I^k)$-valued quadratic forms defined as in \eqref{quadratic},respectively; $\alpha$ and $\beta$ belongs to $\mathcal{E}(G)\oplus\mathcal{E}(G_\varphi)$.
Then it follows that 
\begin{equation}
\begin{split}
Q_G(p_*(v\wedge p^*\alpha),\beta)&=-Q_{G^\varphi_\varphi\times I^k}(v\wedge (\varphi\times id)^* \overline{q}^* \alpha,(\varphi\times id)^* \overline{q}^* \beta)=\\
&=-Q_{G\times I^k}(v\wedge  \overline{q}^* \alpha, \overline{q}^* \beta)=\\
&=-Q_G((\overline{q})_*(v\wedge \overline{q}^*\alpha),\beta).
\end{split}
\end{equation}
So $-\mathcal{Q}_v(p)=\mathcal{Q}(\overline{q})$ which is the identity.
\end{proof} 		

\begin{theorem}[Hilsum-Skandalis]\label{HilSk}
	Let $H\rightrightarrows Y$ and $G\rightrightarrows X$ be two Lie groupoids, with $X$ and $Y$ compact manifolds, and let $\varphi\colon H \to G$ be a homotopy equivalence of groupoids. Let $L$ be the Lie groupoid $G^{\varphi\cup\mbox{id}_X}_{\varphi\cup\mbox{id}_X}$, then
	there exists a path $D_{L,t}^{HS}$ from $D_{L}$ to an invertible operator. Moreover the existence of this path implies that
	\begin{equation}\label{hsgr}
	[\mathcal{E}(H),D_H]\otimes\mu_\varphi=[\mathcal{E}(G),D_G]\in K_*(C^*_r(G)).
	\end{equation}
\end{theorem}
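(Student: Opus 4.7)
The plan is to adapt the classical Hilsum-Skandalis argument of \cite{HilSk} to the Lie groupoid setting by working on the combined groupoid $L = G^{\varphi\cup\mbox{id}_X}_{\varphi\cup\mbox{id}_X}$, which encodes both $H$ and $G$ together with the bibundles relating them. Recall that $C^*_r(L)$ admits a matrix structure with diagonal corners $p_Y C^*_r(L) p_Y = C^*_r(H)$ and $p_X C^*_r(L) p_X = C^*_r(G)$, and off-diagonal bimodule $p_Y C^*_r(L) p_X = E_\varphi$ representing $\mu_\varphi$, which is a genuine Morita equivalence thanks to the homotopy equivalence hypothesis (Remark after Definition \ref{homotopy}). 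With the natural orientation convention on $Y$ and the opposite orientation on $X$, the signature operator $D_L$ on $\mathcal{E}(L)$ is arranged so that its class in $K_*(C^*_r(L))$ corresponds, under the Morita isomorphism to $K_*(C^*_r(G))$, to the difference $[\mathcal{E}(H), D_H] \otimes \mu_\varphi - [\mathcal{E}(G), D_G]$; thus showing this class vanishes will yield \eqref{hsgr}.

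For the construction of $D_{L,t}^{HS}$, I would use the homotopy equivalence data $(\psi, T, S)$ of Definition \ref{homotopy} to build: a pullback chain map $\varphi^\sharp : \mathcal{E}(G) \to \mathcal{E}(H)$ induced by $\varphi$ acting through the bibundle $E_\varphi$; a transfer chain map $\psi^\sharp$ in the opposite direction coming from $\psi$; and chain homotopies $\tau_T, \tau_S$ built from $T$ and $S$ witnessing $\varphi^\sharp \psi^\sharp \sim \mbox{id}$ and $\psi^\sharp \varphi^\sharp \sim \mbox{id}$. Assembling these into the off-diagonal entries of a bounded, $Q_L$-self-adjoint operator $R$ on $\mathcal{E}(L)$ and following the algebraic argument of \cite[Thm. 2.1]{HilSk}, the perturbed operator $D_L + tR$ becomes invertible for $t$ large; a suitable reparametrization produces the path $D_{L,t}^{HS}$ from $D_L$ at the start to an invertible endpoint. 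The invertibility proof is algebraic and uses only the commutation relations between $d_L$, the chain maps, and the chain homotopies, together with the regularity of $Q$; all of these identities pass directly to the groupoid half-density formalism and are precisely the ones exploited in \cite{HilSk}.

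For the K-theoretic consequence, the path gives a homotopy of unbounded Kasparov $C^*_r(L)$-cycles in the sense of \cite{BJ}, hence $[\mathcal{E}(L), D_L] = [\mathcal{E}(L), D_{L,1}^{HS}] = 0$ in $K_*(C^*_r(L))$, the latter vanishing because the endpoint is an invertible regular self-adjoint operator and therefore a degenerate cycle. Transporting this vanishing to $K_*(C^*_r(G))$ via the Morita equivalence implemented by the bimodule $p_X C^*_r(L)$, and identifying the two corner projections with the obvious signature classes (one of them twisted by $\mu_\varphi$), one obtains exactly the equality \eqref{hsgr}. The main obstacle I expect is the analytical step: ensuring that the operators $\varphi^\sharp, \psi^\sharp, \tau_T, \tau_S$ extend to bounded adjointable endomorphisms of the Hilbert module $\mathcal{E}(L)$ rather than merely densely defined maps on the pre-Hilbert module, and that $D_L + tR$ remains regular and self-adjoint throughout the path. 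Once these analytic facts are established, the algebraic manipulations of \cite{HilSk} transfer without essential change.
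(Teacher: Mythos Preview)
Your overall strategy matches the paper's: work on $L=G^{\varphi\cup\mathrm{id}_X}_{\varphi\cup\mathrm{id}_X}$, build a bounded perturbation in the Hilsum--Skandalis style so that the path ends at an invertible operator, deduce $[\mathcal{E}(L),D_L]=0$, and read off \eqref{hsgr} from the $2\times 2$ decomposition of $\mathcal{E}(L)$ after tensoring with $\mu_{\varphi\cup\mathrm{id}_X}$. That part is fine.

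The gap is exactly the point you flag as ``the main obstacle'': the naive pullback $\varphi^\sharp$ on half-density forms is \emph{not} in general a bounded adjointable operator between the Hilbert modules, because $\varphi$ is only transverse to $G$, not a submersion. The paper does not leave this as a caveat; it resolves it, and that resolution is the substantive content of the proof. Concretely, one chooses a complement $E$ with $E\oplus\varphi^*\mathfrak{A}G\cong Y\times\RR^k$ and defines
\[
p\colon Y\times B^k\to X,\qquad (y,\underline{t})\mapsto r\bigl(\exp_{\varphi(y)}(\pi(\underline{t}))\bigr),
\]
which \emph{is} a submersion (this uses transversality of $\varphi$), so that $G^p$, $G^p_p$ are smooth and the groupoid pullback $p^*$ on forms is bounded. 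One then sets $\mathcal{T}\xi=q_*(\omega\wedge p^*\xi)$, where $q$ projects off the $B^k$ factor and $\omega$ is the pullback of a normalized volume form on $B^k$. This integration-along-the-ball averages out the thickening and lands back in $\mathcal{E}(G^\varphi_\varphi)\oplus\mathcal{E}(G^\varphi)$ as a bounded adjointable map. The chain-homotopy operator $\mathcal{Y}$ is then taken as in \cite[Lemma~2.2]{Wahl}. With $\mathcal{T}$ and $\mathcal{Y}$ in hand, the hypotheses of \cite[Lemma~2.1]{HilSk} are satisfied and the path $D^{HS}_{L,t}$ is produced by that lemma.

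Two smaller remarks. First, you invoke both $\varphi^\sharp$ and $\psi^\sharp$ together with two chain homotopies; the Hilsum--Skandalis lemma only needs a single chain map $\mathcal{T}$ and a single homotopy $\mathcal{Y}$ witnessing $1-\mathcal{T}'\mathcal{T}=d\mathcal{Y}+\mathcal{Y}d$ (plus the analogous relation for $\mathcal{T}\mathcal{T}'$), so there is no need to build the inverse direction separately. Second, once boundedness is secured by the submersion trick, regularity and self-adjointness of the perturbed family are automatic from \cite[Lemma~2.1]{HilSk}, so your worry about $D_L+tR$ remaining regular is already absorbed there.
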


\begin{proof}
	
We want to apply \cite[Lemma 2.1]{HilSk}.	Put $(\mathcal{E}_1,Q_1,D_1)=(\mathcal{E}(G_\varphi) \oplus\mathcal{E}(G),-Q_G, d_G)$ and $(\mathcal{E}_2,Q_2,D_2)=(\mathcal{E}(G^\varphi_\varphi)\oplus\mathcal{E}(G^\varphi),Q_{G^\varphi_\varphi}, d_{G^\varphi_\varphi})$.
	
For this aim consider the operators $\mathcal{T}_v(p)$ and $\mathcal{Y}$ defined previously. Remark \ref{HSa},  Lemma \ref{HSb} and Lemma \ref{HSc} imply that these operators verify the hypotheses of \cite[Lemma 2.1]{HilSk}.
As in the proof of \cite[Lemma 2.1]{HilSk} we can construct an explicit path
$D_{L,t}^{HS}$ from $D_{L}$ to an invertible operator. In particular this means that $[\mathcal{E}(L),D_L]=0$ and the following equality
	\begin{equation*}
	[\mathcal{E}(L),D_L]\otimes\mu_{\varphi\cup\mbox{id}_X}=[\mathcal{E}(H),D_H]\otimes\mu_\varphi-[\mathcal{E}(G),D_G]
	\end{equation*}  
	implies \eqref{hsgr}.
\end{proof}

The following set is a generalisation of the \emph{Structure set} in Surgery Theory, in which we take in account  both the smooth and the groupoid structure of a Lie groupoid $G$.
\begin{definition}
	Let $G\rightrightarrows X$ be a Lie groupoid on a compact smooth manifold. We define the $G$-structure set $\mathcal{S}(G)$ of $X$ as the set 
	\[
	\{\varphi\colon H\to G\,|\, \varphi \mbox{ is a homotopy equivalence of Lie groupoids }\}/\sim,
	\]
	where $(H\rightrightarrows Y,\varphi)\sim(H'\rightrightarrows Y',\varphi')$ if there exist
	\begin{itemize}
		\item a cobordism $W$ with boundary $Y\cup Y'$,
		\item a Lie groupoid $K\rightrightarrows W$,
		transverse to the boundary 
		\item a morphism $\Phi\colon K\to G\times[0,1]\times[0,1]$ such that 
		$\Phi$ is a groupoid homotopy equivalence and, if we restrict it to the boundary, we have that
		$\Phi_{|Y}=\varphi\colon H\to G$ and  $\Phi_{|Y'}=\varphi'\colon H'\to G$.
	\end{itemize} 
	
\end{definition}

If $\varphi\colon H\to G$ is a homotopy equivalence of groupoids, we know that $H=G_\varphi^\varphi$. Let $L$ denote the Lie groupoid $G^{\varphi\cup\mbox{id}_X}_{\varphi\cup\mbox{id}_X}$.
The Signature operator of $L_{ad}$ and
 $D_{L,t}^{HS}$, the path from $D_L$ to an invertible operator given by Theorem \ref{HilSk}, produces an unbounded $\CC$-$C^*_r(L_{ad}^\circ)$-bimodule. Denote 
 by $[D_{L,t}^{HS}]$ the class $\varrho(\sigma(D_L),D_{L,t}^{HS})\in KK^*(\CC,C^*_r(G_{ad}^\circ))$ defined as in Remark \ref{ubrho}.

\begin{definition}\label{rhosignature}
	Let us define the secondary invariant $\varrho(\varphi)$ as the class
	\[
	[D_{L,t}^{HS}]\otimes (\varphi\cup \mbox{id}_X)_!^{ad}\in K_n\left(C^*_r(G_{ad}^\circ)\right),
	\]
	where $n$ is the rank of $\mathfrak{A}G$.
\end{definition}

\begin{proposition}\label{welldefinedrho}
	
	The map
	\[
	\varrho\colon \mathcal{S}_G(X)\to K_n\left(C^*_r(G_{ad}^\circ)\right)
	\] is well defined.
\end{proposition}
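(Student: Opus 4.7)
The plan is to reduce well-definedness to an application of the Delocalized APS Index Theorem for Lie groupoids (Theorem \ref{APSdelocgr}). Given a cobordism witnessing $(H,\varphi)\sim(H',\varphi')$, that is a compact manifold $W$ with $\partial W=Y\sqcup Y'$, a Lie groupoid $K\rightrightarrows W$ transverse to $\partial W$, and a homotopy equivalence of groupoids $\Phi\colon K\to G\times[0,1]\times[0,1]$ restricting to $\varphi$ and $\varphi'$ on the two boundary components, I want to interpret the difference $\rho(\varphi')-\rho(\varphi)$ as a boundary obstruction that is killed by the presence of a global perturbation on the interior of the cobordism.

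First, I would form the global groupoid $\mathcal{L}:=(G\times[0,1])^{\Phi\cup\mathrm{id}}_{\Phi\cup\mathrm{id}}\rightrightarrows W\sqcup(X\times[0,1])$, the cobordism analogue of the groupoid $L=G^{\varphi\cup\mathrm{id}_X}_{\varphi\cup\mathrm{id}_X}$ used in Definition \ref{rhosignature}, and then its Monthubert version $\mathcal{L}(W',\partial W')$ (Definition \ref{bcalculus}) where $W'=W\sqcup X\times[0,1]$ is seen as a manifold with boundary. Near $\partial W$, the transversality hypothesis and Remark \ref{boundaryloc} give a cylindrical product decomposition of $\mathcal{L}$, so that the restriction of $\mathcal{L}(W',\partial W')$ to the boundary is exactly $L\cup L'$, times $\mathbb{R}$; this is the natural ambient setting in which to run the APS machinery.

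Second, I would apply the Hilsum-Skandalis Theorem \ref{HilSk} to the homotopy equivalence $\Phi$ on the interior: this produces a path $D^{HS}_{\mathcal{L},t}$ from the signature operator $D_{\mathcal{L}}$ to an invertible operator on the whole cobordism, whose restrictions to $\partial W$ are the Hilsum-Skandalis paths $D^{HS}_{L,t}$ and $D^{HS}_{L',t}$ defining the boundary $\rho$-classes. This yields the data of Theorem \ref{APSdelocgr} applied to $P=D_{\mathcal{L}}$: the symbol together with $D^{HS}_{\mathcal{L},t}$ gives a fully perturbed class $[P^{\mathcal{F}}_{ad}]$ in $K_\ast\!\left(C^*_r(\mathcal{L}(W',\partial W')_{ad}^{\mathcal{F}})\right)$, and, crucially, the generalized Fredholm index $\mathrm{Ind}(P,P^\partial_t)$ vanishes, because our perturbation is invertible on all of $W'$ and not merely on the boundary. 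Hence $\partial_B\!\left(\mathrm{Ind}(P,P^\partial_t)\right)=0$, and Theorem \ref{APSdelocgr} forces $\partial_C\!\left(\rho(P^\partial_t)\right)=0$.

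Third, the cylindrical structure near $\partial W$ and Remark \ref{cylinder} identify $\partial_C$ with the sum of two inverse Bott maps, one for each boundary component, so the previous vanishing becomes the equality $\rho(D^{HS}_{L,t})=\rho(D^{HS}_{L',t})$ in $K_\ast\!\left(C^*_r\!\left((L\sqcup L')_{ad}^\circ\right)\right)$, modulo the sign coming from the opposite orientations of $Y$ and $Y'$ in $\partial W$. Finally I would push this equality forward to $K_n(C^*_r(G_{ad}^\circ))$ through the wrong-way class of $\Phi\cup\mathrm{id}$, using Proposition \ref{functoriality} and Proposition \ref{compo} to identify the restriction of $(\Phi\cup\mathrm{id})^{ad}_!$ to the two boundary components with $(\varphi\cup\mathrm{id}_X)^{ad}_!$ and $(\varphi'\cup\mathrm{id}_{X})^{ad}_!$ respectively, which yields $\rho(\varphi)=\rho(\varphi')$ as desired. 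The main technical obstacle I foresee is the careful packaging of the Hilsum-Skandalis perturbation on the Monthubert-adiabatic groupoid $\mathcal{L}(W',\partial W')_{ad}^{\mathcal{F}}$, so that its restrictions to the various strata carry exactly the interpretations required by Theorem \ref{APSdelocgr}; a secondary subtlety is bookkeeping the orientation signs so that the two boundary contributions combine into the desired equality rather than a sum.
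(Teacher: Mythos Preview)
Your strategy is the paper's: form the pulled-back groupoid $\mathcal{L}$ over $W\sqcup X\times[0,1]$, apply Hilsum--Skandalis on the whole cobordism so that the signature operator becomes invertible (hence the Fredholm index vanishes), invoke Theorem~\ref{APSdelocgr} to get $\partial_C(\rho(P^\partial_t))=0$, and then push forward via lower-shriek.

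There is however a logical slip in your third step. You write that the vanishing of $\partial_C$ yields $\rho(D^{HS}_{L,t})=\rho(D^{HS}_{L',t})$ in $K_\ast\bigl(C^*_r((L\sqcup L')_{ad}^\circ)\bigr)$; but this group is a direct sum and the two classes live in different summands, so no such equality can even be formulated there. More substantively, $\partial_C$ lands in $K_\ast\bigl(C^*_r((\mathcal{L}_{|\mathring{W'}})_{ad}^\circ)\bigr)$, and on a general cobordism $W'$ this map factors through the inclusion of collar neighbourhoods into the interior, which need not be injective. So Remark~\ref{cylinder} does not apply to $\partial_C$ directly: that remark is about the cylinder $X\times[0,1]$, not about $W'$. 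The paper's fix---and the step you are missing---is to \emph{first} push forward via $(\Phi_{|\partial W}\cup\mathrm{id}_{X\times\{0,1\}})_!^{ad}$, using Proposition~\ref{functoriality} to commute the shriek past $\partial_C$ and replace it by the boundary map $\partial'_C$ of the honest cylinder $X\times[0,1]$; only there does Remark~\ref{cylinder} legitimately identify the boundary map with a sum of Bott inverses and allow the two contributions to be compared in a common target. One then composes with $(\pi_X)_!^{ad}$ for the projection $\pi_X\colon X\times[0,1]\to X$ to reach $K_n(C^*_r(G_{ad}^\circ))$. Apart from this reordering and the omitted final $(\pi_X)_!^{ad}$, your outline is correct.
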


\begin{proof} 
	Let $\Phi\colon K\to G\times[0,1]\times[0,1]$ be a groupoid homotopy equivalence with
	$\Phi_{|Y}=\varphi\colon H\to G$ and  $\Phi_{|Y'}=\varphi'\colon H'\to G$.
		Since $\Phi$ is a homotopy equivalence of groupoids, $K(W,\partial W)= G(X\times[0,1],X\times\{0,1\})^\Phi_\Phi$.
		Let $\mathcal{L}\rightrightarrows W\cup X\times[0,1]$ be the pull-back of $G(X\times[0,1],X\times\{0,1\})$ through $\Phi\cup \mbox{id}_{X\times[0,1]}$, let $L$ and $L'$ be the restrictions of $\mathcal{L}$ to $(Y\cup X)\times\{0\}$ and $(Y'\cup X)\times\{1\}$ respectively.
	We have to show that 
	\[[D_{L,t}^{HS}]\otimes (\varphi\cup \mbox{id}_X)_!^{ad}=[D^{HS}_{L',t}]\otimes (\varphi'\cup \mbox{id}_X)_!^{ad}\in K_n\left(C^*_r(G_{ad}^\circ)\right).\]

	Thanks to Theorem \ref{HilSk}, we get a class $[D^{HS}_{\mathcal{L},t}]\in K_{n+1}\left(C^*_r(\mathcal{L}_{ad}^\circ)\right)$.
	The formula in Proposition \ref{APSdelocgr} and the fact that $\Phi$ is a homotopy equivalence
imply that
	\[
	\partial_b\left([D^{HS}_{\mathcal{L},t}]\otimes[ev_{\partial}]\right)=0\in K_{n+1}\left(C^*_r((\mathcal{L}_{\mathring{W}\cup X\times(0,1)})_{ad}^\circ)\right).
	\]
	where $\partial_b$ is defined as in Remark \ref{b-boundary}.
	
	Let $\pi_{X}\colon X\times[0,1]\to X$ the projection onto the first factor.
	If we prove that \[\partial_b\left([D^{HS}_{\mathcal{L},t}]\otimes[ev_{\partial}]\right)\otimes (\Phi_{|\partial W}\cup \mbox{id}_{X\times \{0,1\}})_!^{ad}\otimes (\pi_X)_!^{ad}
	\]
	and
	\[[D_{L,t}^{HS}]\otimes (\varphi\cup \mbox{id}_{X\times\{0\}})_!^{ad}-[D_{L',t}^{HS}]\otimes (\varphi'\cup \mbox{id}_{X\times\{1\}})_!^{ad}\]
	are the same class, we are done.
	By Proposition \ref{functoriality}, we get the following equality
	\[
	\partial_b\left([D_{\mathcal{L},t}^{HS}]\otimes[ev_{\partial}]\right)\otimes (\Phi\cup \mbox{id}_{X\times (0,1)})_!^{ad}=\partial'_b\left([D_{\mathcal{L},t}^{HS}]\otimes[ev_{\partial}]\otimes (\Phi_{|\partial W}\cup \mbox{id}_{X\times \{0,1\}})_!^{ad}\right)
	\]
	where $\partial'_b$ is the boundary map associated to the restriction from the cylinder $X\times[0,1]$ to the boundary $X\times\{0,1\}$.
	
Observe that $[D_{\mathcal{L},t}^{HS}]\otimes[ev_{\partial}]=\left(
	[D_{L,t}^{HS}]\otimes s\right)\oplus\left([D_{L',t}^{HS}]\otimes s\right) $ where $s$ is the generator of $K_1(C^*_r(\RR))$.

	We get that
		\begin{equation*}
		\begin{split}
		\partial_b&\left([D_{\mathcal{L},t}^{HS}]\otimes[ev_{\partial}]\right)\otimes (\Phi\cup \mbox{id}_{X\times (0,1)})_!^{ad}\otimes (\pi_X)_!^{ad}=\\
		&=\partial'_b\left([D_{\mathcal{L},t}^{HS}]\otimes[ev_{\partial}]\otimes (\Phi_{|\partial W}\cup \mbox{id}_{X\times \{0,1\}})_!^{ad}\right)\otimes (\pi_X)_!^{ad}\\
		&=\partial'_b\left(\left((
		[D_{L,t}^{HS}]\otimes\ s)\oplus([D_{L',t}^{HS}]\otimes s)\right)\otimes (\Phi_{|\partial W}\cup \mbox{id}_{X\times \{0,1\}})_!^{ad}\right)\otimes (\pi_X)_!^{ad}=\\
		&=\partial'_b\left(\left(\left([D_{L,t}^{HS}]\otimes (\varphi\cup \mbox{id}_{X\times\{0\}})_!^{ad}\right)\oplus\left([D_{L',t}^{HS}]\otimes (\varphi'\cup \mbox{id}_{X\times\{1\}})_!^{ad}\right) \right)\otimes s\right)\otimes (\pi_X)_!^{ad}=\\
		&=[D_{L,t}^{HS}]\otimes (\varphi\cup \mbox{id}_X)_!^{ad}-[D_{L',t}^{HS}]\otimes (\varphi'\cup \mbox{id}_X)_!^{ad},
		\end{split}
		\end{equation*}
		where we used Proposition \ref{functoriality} for the first equality, and the following calculation for the last one.
Let $\varrho_0$ and $\varrho_1$ denote $ [D_{L,t}^{HS}]\otimes (\varphi\cup \mbox{id}_X)_!^{ad}$ and $[D_{L',t}^{HS}]\otimes (\varphi'\cup \mbox{id}_X)_!^{ad}$ respectively.

\begin{equation*}
\begin{split}
((\varrho_0\oplus\varrho_1)\otimes s)\otimes[\mathrm{id}\otimes\mathrm{ev}_0]^{-1}\otimes(\mathrm{id}\otimes\beta_{ad})\otimes\Delta\otimes (\pi_X)_!^{ad}=\\
=((\varrho_0\oplus\varrho_1)\otimes s_{ad})\otimes(\mathrm{id}\otimes\beta_{ad})\otimes\Delta\otimes (\pi_X)_!^{ad}=\\
=((\varrho_0\oplus\varrho_1)\otimes Bott_{ad})\otimes\Delta\otimes (\pi_X)_!^{ad}
\end{split}
\end{equation*}
where $s_{ad}$ is the generator of $K_1(C^*_r(\RR_{ad}))$ and $Bott_{ad}$ is the generator of $K_0(C^*_r((0,1)\times(0,1)_{ad}))$.
Now, since $\pi_X=\pi\otimes c\colon (X\times\{0,1\})\times(0,1)\to X$, with $c\colon (0,1)\to pt$ the map to the point and $\pi$ the obvious map, it follows that $\Delta\otimes (\pi_X)_!^{ad}=\pi_!^{ad}\otimes c_!^{ad}$.
Finally, using the fact that $Bott_{ad}\otimes  c_!^{ad}=1$  and that $(\varrho_0\oplus\varrho_1)\pi_!^{ad}=\varrho_0-\varrho_1$, where the minus comes from the orientation, we obtain the desired result.
\end{proof}

\subsection{The Dirac operator}
Let $G\rightrightarrows X$ be a Lie groupoid over a compact manifold $X$, with Lie algebroid $\mathfrak{A}(G)\to X$. Let $g$ be a metric on $\mathfrak{A}(G)$, by means of it we can define a $G$-invariant metric on $\ker{ds}$ along the $s$-fibers of $G$. Let $\nabla$ be the fiberwise Levi-Civita connection associated to this metric.
\begin{definition}\label{diracgr}
	Let $\mathrm{Cliff}\left(\mathfrak{A}(G)\right)$ be the Clifford algebra bundle over $X$ associated to the metric $g$.
	Let $S$ be a bundle of Clifford modules over  $\mathrm{Cliff}\left(\mathfrak{A}(G)\right)$ and let $c(X)$ denote the Clifford multiplication by $X\in\mathrm{Cliff}\left(\mathfrak{A}(G)\right)$.
	Assume that $S$ is equipped with a metric $g_S$ and a compatible connection $\nabla^S$ such that:
	\begin{itemize}
		\item the Clifford multiplication is skew-symmetric, that is
		\[
		\langle c(X)s_1,s_2\rangle+\langle s_1,c(X)s_2\rangle=0
		\]
		for all $X\in C^\infty\left(X,\mathfrak{A}(G)\right)$ and $s_1,s_2\in C^\infty(X,S)$;
		\item $\nabla^S$ is compatible with the Levi-Civita connection $\nabla$, namely
		\[
		\nabla^S_X(c(Y)s)=c(\nabla_XY)s+c(Y)\nabla^S_X(s)
		\]
		for all $X,Y\in C^\infty\left(X,\mathfrak{A}(G)\right)$ and $s\in  C^\infty(X,S)$.
	\end{itemize}
	The Dirac operator associated to this is defined as
	\[
	\slashed{D}_S\colon s\mapsto \sum_{\alpha}c(e_\alpha)\nabla^S_\alpha(s)
	\]
	for $s\in C^\infty(X,S)$ and $\{e_\alpha\}_{\alpha\in A}$ a local orthonormal frame.
	
\end{definition}
With this local expression one can easily prove the analogue of the Lichnerowitz-Weitzenb\"{o}ck formula:
\begin{equation}\label{Gdirac}
\slashed{D}_S^2=(\nabla^S)^*\nabla^S+\sum_{\alpha<\beta}c(e_\alpha)c(e_\beta)R(\nabla^S)_{\alpha\beta},
\end{equation}
where $R(\nabla^S)_{\alpha\beta}$ denotes the terms of the curvature of $\nabla^S$.
Assume that the Lie algebroid $\mathfrak{A}(G)$ is $\mathrm{Spin}$, namely it is orientable and its structure group $SO(n)$ can be lifted to the double cover $\mathrm{Spin}(n)$. Moreover we can consider the \emph{spinor} bundle $\slashed{S}$ and denote the associated Dirac operator just by $\slashed{D}$. In this case the second term in \eqref{Gdirac} is equal to $\frac{1}{4}$ of the scalar curvature of $\nabla^{\slashed{S}}$, see \cite[Section 3.3]{PPT}.

\begin{remark}
	The above discussion implies that, if the scalar curvature of $\nabla^{\slashed{S}}$ is uniformly positive everywhere, then the Dirac operator $\slashed{D}$ is invertible. Hence the operator
	$\slashed{D}_{ad}$, defined as in \eqref{adoperator1} and \eqref{adoperator2}, is an unbounded multiplier of $G_{ad}$ that is invertible at $1$.

	Remember that for the Signature operator we need to perturb the operator to an invertible one, whereas in the case of the Dirac operator  we already have the invertiblity condition at $1$ in the adiabatic deformation, thanks to the positivity of the scalar curvature.
\end{remark}

\emph{From now on we will assume that $BG$, the classifying space of $G$, is a manifold and $\mathcal{BG}\rightrightarrows BG$ is the Lie groupoid associated to the universal 1-cocycle $\xi$ (see Appendix \ref{app3} for definitions)}.

We want to define a groupoid version of the Stolz sequence 
\[\xymatrix{\Omega^{\mathrm{spin}}_{n+1}(\mathcal{BG})\ar[r] & \mathrm{R}^{\mathrm{spin}}_{n+1}(\mathcal{BG})\ar[r] & \mathrm{Pos}^{\mathrm{spin}}_{n}(\mathcal{BG})\ar[r] & \Omega^{\mathrm{spin}}_{n}(\mathcal{BG})}\]
(see for instance \cite{Stolz} for the  definition in the case where $G$ is a group).

\begin{definition}\label{defstolzgr}
	Let $G\rightrightarrows X$ be a Lie groupoid. 
	\begin{itemize}
		\item Let $\mathrm{Pos}^{spin}_n(\mathcal{BG})$ be the set of bordism classes of triples $\left(M,f\colon M\to BG, g\right)$. Here $f\colon M\to BG$ is a smooth  map from a smooth closed manifold $M$ such that: $f$ is transverse with respect to $\mathcal{BG}$, $\mathfrak{A}(\mathcal{BG}_f^f)$ is spin of rank $n$ and it is equipped with a metric $g$ with positive scalar curvature.
		
		A bordism between $\left(M,f\colon M\to BG, g\right)$ and $\left(M',f'\colon M'\to BG, g'\right)$ is a triple \[(W,F\colon W\to BG,h),\] where $W$ is a compact smooth manifold with boundary $\partial W=M\sqcup-M'$, a reference map $F$  that restricts to $f$ and $f'$ on the boundary and such that $\mathfrak{A}(\mathcal{BG}_F^F)$ is spin equipped with a metric $h$ with positive scalar curvature, which has a product structure near the boundary and restricts to $g$ and $g'$ on the boundary.
		
		\item Let $R_{n+1}^{spin}(\mathcal{BG})$ be the set of bordism classes $(W,f\colon W\to BG, g)$. Here $W$ is a compact  smooth manifold, possibly with boundary;
		$f\colon W\to BG$ is a smooth map  that is transverse with respect to $\mathcal{BG}$ and such that $\mathfrak{A}(\mathcal{BG}_f^f)$ is spin, of rank $n$ and equipped with a metric $g$ which has product structure near the boundary; the metric $g$ has positive scalar curvature on the boundary.
		
		Two triples $(W,f,g)$ and $(W',f',g')$  are bordant if there exists a bordism \[(N,\varphi\colon N\to BG, h)\] between $(\partial W,f_\partial, g_\partial)$ and $(\partial W',f'_\partial,g'_\partial)$ such that $(\mathfrak{A}(\mathcal{BG}_\varphi^\varphi),h)$ is spin with positive scalar curvature and such that
		\[Y:=W\sqcup_{\partial W}N\sqcup_{\partial W'} W'\]
		is the boundary of a manifold $Z$
		such that the reference map $ F= f\sqcup\varphi\sqcup f'$  extends to a reference map $F'\colon Z\to BG$  and the Lie algebroid of the associated Lie groupoid is spin.
		
		\item Let $\Omega_n^{spin}(\mathcal{BG})$ be the set of bordim classes $(M,f\colon M\to BG)$. Here $M$ is a closed  smooth manifold; $f\colon M \to BG$ is a smooth map that is transverse with respect to $\mathcal{BG}$ and such that $\mathfrak{A}(\mathcal{BG}_f^f)$ is spin of rank $n$. The bordism equivalence between triples is as for $\mathrm{Pos}^{spin}_n(\mathcal{BG})$, without conditions about the metric.
	\end{itemize}

\end{definition}

Thus we obtain a groupoid version of  the Stolz sequence, 
as in the classical case, and we want to build a diagram
\begin{equation}\label{stolzgr}\xymatrix{\Omega^{\mathrm{spin}}_{n+1}(\mathcal{BG})\ar[r]\ar[d]^\beta & \mathrm{R}^{\mathrm{spin}}_{n+1}(\mathcal{BG})\ar[r]\ar[d]^{\mathrm{Ind}_{\mathcal{BG}}} & \mathrm{Pos}^{\mathrm{spin}}_{n}(\mathcal{BG})\ar[r]\ar[r]\ar[d]^\varrho & \Omega^{\mathrm{spin}}_{n}(\mathcal{BG}) \ar[d]^\beta\\
	K_{n+1}\left(C^*_r(\mathfrak{A}(\mathcal{BG}))\right)\ar[r] & K_{n}\left(C^*_r(\mathcal{BG}\times(0,1))\right)\ar[r]^\iota & K_{n}\left(C^*_r(\mathcal{BG}_{ad}^\circ)\right)\ar[r] & K_{n}\left(C^*_r(\mathfrak{A}(\mathcal{BG}))\right)}\end{equation}
such that all the squares commute.

\begin{remark}
	It is left to the reader to check that the first row of the previous diagram is exact. This follows immediately by the definitions: if for instance a cycle $\left(M,f\colon M\to BG, g\right)$ for  $\mathrm{Pos}^{spin}_n(\mathcal{BG})$ becomes trivial in $\Omega^{\mathrm{spin}}_{n}(\mathcal{BG})$, this means that there is a manifold $W$ with boundary $M$ and a map $F\colon W\to BG$ which restricts to $f$ on $M$. Extending  $g$ to any metric on $W$ which has product structure near the boundary we obtain a lift of $\left(M,f\colon M\to BG, g\right)$ in $\mathrm{R}^{\mathrm{spin}}_{n+1}(\mathcal{BG})$.
	Exactness at the other groups of the sequence is proven similarly.
\end{remark}

Let us give the definition of the vertical homomorphisms in \eqref{stolzgr}.

\subsubsection*{Definition of $\beta\colon\Omega^{\mathrm{spin}}_{n}(\mathcal{BG})\to K_{n}\left(\mathfrak{A}(C^*_r(\mathcal{BG}))\right)$}
Let $(M,f\colon M\to BG)$ an element of $\Omega^{spin}_n(\mathcal{BG})$.
Then the Lie algebroid $\mathcal{BG}_f^f$ is spin and, as in Definition \ref{diracgr}, we can define a Dirac operator associated to this spin structure. We will denote it by $\slashed{D}_{f}$ and its symbol $\sigma(\slashed{D}_{f})\in \mathcal{M}(C_0(\mathfrak{A}^*(\mathcal{BG}_f^f)))$ defines a class in $K_n\left(C^*_r(\mathfrak{A}(\mathcal{BG}_f^f))\right)$, by Fourier transform. 
Then $\beta$ is defined as follows
\[
\beta(M,f):= [\hat{\sigma}(\slashed{D}_{f})]\otimes df_!\in K_n\left(C^*_r(\mathfrak{A}(\mathcal{BG}))\right).
\]

It is easy to prove that $\beta$ is well defined.
Indeed if $(M,f)$ and $(M',f')$ are bordant through $(W,F)$, let $E$ denote the dual of the Lie algebroid $\mathfrak{A}(\mathcal{BG}_F^F)$ over $W$, let $\partial E= \mathfrak{A}(\mathcal{BG}_f^f)\times\RR\sqcup \mathfrak{A}(\mathcal{BG}_{f'}^{f'})\times\RR$ be its restriction to the boundary of $W$ and let $\mathring{E}$ be its restriction to the interior of $W$. Then the symbol of the Dirac operator $\slashed{D}_F$ defines a  class $[\sigma(\slashed{D}_F)]$ in the group $K_{n+1}(C_0(E^*))$.
Consider the following commutative diagram:
\[
	\xymatrix{
		K_{*}\left(C_0(E^*)\right)\ar[r]^(.5){\mathrm{ev}_\partial}& K_{*}(C_0(\partial E^*))\ar[d]_(.5){d(f\sqcup f ')_!\otimes Bott}\ar[r]^(.5){\partial} & K_{*}(C_0(\mathring{E}^*))\ar[dl]^{d\mathring{F}_!}\\
		 & K_{*+1} (C_0(\mathfrak{A}^*\mathcal{BG} ))  &  }
\]
where the boundary morphism $\partial$ is  $di_!\otimes Bott$, with $i\colon \partial W\hookrightarrow W$ the inclusion.
Then
  \[[\sigma(\slashed{D}_F)]\otimes \mathrm{ev}_\partial\otimes d(f\sqcup f ')_!\otimes Bott=\beta(M,f)-\beta(M',f').\]
But 
 \[[\sigma(\slashed{D}_F)]\otimes \mathrm{ev}_\partial\otimes d(f\sqcup f ')_!=[\sigma(\slashed{D}_F)]\otimes \mathrm{ev}_\partial\otimes\partial\otimes d\mathring{F}_!=0\]
 by exactness of the top row exact sequence.
 This proves that 
		\[
		\beta(M,F)=\beta(M',f').
		\]

	\subsubsection*{Definition of $\mathrm{Ind}_{\mathcal{BG}}\colon R^{spin}_{n+1}(\mathcal{BG})\to K_{n}\left(C^*_r(\mathcal{BG}\times(0,1))\right) $}
	Let us consider an element 
	$(W,f\colon W\to BG,g) \in R^{spin}_{n+1}(\mathcal{BG})$ and
	the Dirac operator $\slashed{D}_f$. Since we have positive scalar curvature on the boundary, using \eqref{adoperator1}, \eqref{adoperator2}  for $\sigma(\slashed{D}_{\partial f})$ we obtain a class $[\sigma_{nc}(\slashed{D}_f)]\in K_{n+1}(C^*_r\left(\mathcal{T}_{nc}\mathcal{BG}^f_f(W,\partial W)\right))$ that is given by the symbol of $\slashed{D}_f)$ on the interior and by $(\slashed{D}_{\partial f})_{ad}$ on the adiabatic deformation of the boundary. Using the KK-equivalence \ref{bkk}, we obtain a class  
	\begin{equation}\label{y}
	y:=[\hat{\sigma}_{nc}(\slashed{D}_f)]\otimes[\mathrm{ev}_{W_\partial}]^{-1}
	\in K_{n+1}(C^*_r(\mathcal{BG}_f^f(W,\partial W)_{ad}^{\mathcal{F}}))\end{equation}
	(see Definition \ref{nctangent}).
	Hence we can define the map $\mathrm{Ind}_{\mathcal{BG}}$ in the following way
	\[(W,f\colon W\to BG,g)\mapsto y\otimes \mathrm{ev}_{1}\otimes \mu_f\otimes s \in K_{n}\left(C^*_r(\mathcal{BG}\times(0,1))\right),\]
	where
	\begin{itemize}
		\item $\mathrm{ev}_1\colon\mathcal{BG}_f^f(W,\partial W)_{ad}^{\mathcal{F}}\to \mathcal{BG}_f^f(\mathring{W}) $ is the evaluation at $t=1$ in the adiabatic deformation;
		\item $\mu_f$ is the Morita equivalence associated to the pull-back construction;
		\item $s$ is the generator of $K_1(C^*_r(\RR))$.
	\end{itemize} 
	
	This map is well-defined on bordism classes:
	let $(W,f,g)$ and $(W',f',g')$ be two triples in $R^{spin}_{n+1}(\mathcal{BG})$; let $N$,
	$(\mathcal{BG}_\varphi^\varphi,h)$, $(Y,F)$ and $(Z,F')$ be as in Definition \ref{defstolzgr}.
	
	Let  $z\in K_{n+1}\left(C^*_r(\mathcal{BG}_F^F)\right)$  be the $\mathcal{BG}$-index class  of $\slashed{D}_F$. Since $Y$ is a boundary, by Remark \ref{boundaryloc} and \cite[Theorem 4.3]{HilBoundary}, the class $z$ is the zero class.
Let  $i$ be the  inclusion of $\mathring{W}\sqcup\mathring{W}'$ in $Y$.
	As the scalar curvature on $N$ is positive, we have that $\mathrm{ev}_{N}(z)=0$ and then $z$ is an element of $K_{n+1}\left(C^*_r(\mathcal{BG}_f^f(\mathring{W})\sqcup\mathcal{BG}_{f'}^{f'}(\mathring{W}'))\right)$, that is just $K_{n+1}\left(C^*_r(\mathcal{BG}_f^f(\mathring{W}))\right)\oplus K_{n+1}\left(C^*_r(\mathcal{BG}_{f'}^{f'}(\mathring{W'}))\right)$. This element is the direct sum of $\mathrm{ev_1}(y)$ and $-\mathrm{ev_1}(y')$ (the sign $-$ is given by the orientation in the pasting process), namely the indices of $\slashed{D}_f$ and $\slashed{D}_{f'}$ respectively.
	By the definition of $F$, it follows that $\mu_i\otimes\mu_F =\mu_{f\sqcup f'}$. Hence
	\begin{equation*}
	\begin{split}
	\mathrm{Ind}_{\mathcal{BG}}(W,f,g)-&\mathrm{Ind}_{\mathcal{BG}}(W',f',g') = y\otimes \mathrm{ev}_{1}\otimes \mu_f\otimes s -y'\otimes \mathrm{ev}_{1}\otimes \mu_{f'}\otimes s =\\
	&=
(y\otimes \mathrm{ev}_{1}\oplus -y'\otimes \mathrm{ev}_{1})\otimes \mu_{f\sqcup f'}\otimes s=\\
	&
	=
	(y\otimes \mathrm{ev}_{1}\oplus -y'\otimes \mathrm{ev}_{1})\otimes \mu_i\otimes  \mu_{F}\otimes s =
	\\
	& 	=
	z\otimes \mu_{F}\otimes s = 0.
	\end{split}
	\end{equation*}
	
	\subsubsection*{Definition of $\varrho\colon \mathrm{Pos}^{spin}_n(\mathcal{BG})\to K_n\left(C^*_r(\mathcal{BG}^\circ_{ad})\right)$}
	Let $(M,f,g)$ be a triple in $\mathrm{Pos}^{spin}_n(\mathcal{BG})$.
	In this case, since  the algebroid is spin and the scalar curvature is positive, the Dirac operator $\slashed{D}_f$ defines directly a class $\rho(\sigma(\slashed{D}_f),\slashed{D}_f)\in K_n\left(C^*_r((\mathcal{BG}_f^f)^\circ_{ad})\right)$
	associated to the path constantly equal to $\slashed{D}_f$, as in the Remark \ref{ubrho}. Then we define the $\rho$-class as follows:
	\begin{equation}\label{rhodiracgr}
	\varrho(M,f,g):=\varrho(\sigma(\slashed{D}_f),\slashed{D}_f)\otimes f_!^{ad}\in K_n(C^*_r(\mathcal{BG}_{ad}^\circ)).
	\end{equation}
	
	We should check that this map is well-defined, but the proof of this fact is completely analogous to the one of Proposition \ref{welldefinedrho}.
	\begin{proposition}
		Diagram \eqref{stolzgr} is commutative.
\end{proposition}
	\begin{proof}
	The commutativity of the first square is clear.
	The commutativity of the third square is easily obtained using Theorem \ref{th-comm}.
	The only square whose commutativity is not obvious is the second one.
	
	 Consider a cycle $(W,f\colon W\to BG,g)\in \mathrm{R}^{spin}_{n+1}(\mathcal{BG})$.
	 Its image in $\mathrm{Pos}^{spin}_n(\mathcal{BG})$ is given by $(\partial W, \partial f\colon \partial W\to BG,g_{|\partial W})$. 
	 Now $\varrho(\partial W, \partial f,\partial g)$ is given by $\varrho(\sigma(\slashed{D}_{\partial_f}),\slashed{D}_{\partial f})\otimes \partial f_!^{ad}\in K_n(C^*_r(\mathcal{BG}_{ad}^\circ))$.
	 We have to prove that $\iota_*\mathrm{Ind}_{\mathcal{BG}}(W,f,g)=\varrho(\partial W, \partial f,\partial g)$.
	
	 Let $y\in K_{n+1}(C^*_r(\mathcal{BG}_f^f(W,\partial W)_{ad}^{\mathcal{F}}))$ be the element in \eqref{y} used to define $\mathrm{Ind}_{\mathcal{BG}}(W,f,g)$ and notice that its restriction to the boundary, give by $y\otimes[\mathrm{ev}_\partial]$, is equal to the suspension of the $\varrho$-class $\varrho(\sigma(\slashed{D}_{\partial_f}),\slashed{D}_{\partial f})\otimes s$, where $s$ is the generator of $K_1(C^*_r(\RR))$.
	 
	Then we have the following equalities
	\begin{equation*}
	\begin{split}
   & \iota_* \mathrm{Ind}(W,f,g)=\\
    &y\otimes \mathrm{ev}_{1}\otimes \mu_f\otimes s\otimes[\iota]=\\
    &y\otimes \mathrm{ev}_{1}\otimes  s\otimes[\iota]\otimes f_!^{ad}=\\
    &y\otimes[\mathrm{ev}_\partial]\otimes\partial_b\otimes f_!^{ad}=\\
   & (\varrho(\sigma(\slashed{D}_{\partial_f}),\slashed{D}_{\partial f})\otimes s)\otimes\partial_b\otimes f_!^{ad}=\\
   &  (\varrho(\sigma(\slashed{D}_{\partial_f}),\slashed{D}_{\partial f})\otimes s)\otimes [\mathrm{id}\otimes\mathrm{ev_0}]^{-1}\otimes(\mathrm{id}\otimes \beta_{ad})\otimes[\Delta]\otimes [\iota_b]\otimes f_!^{ad}=\\
     & (\varrho(\sigma(\slashed{D}_{\partial_f}),\slashed{D}_{\partial f})\otimes s_{ad})\otimes(\mathrm{id}\otimes \beta_{ad})\otimes[\Delta]\otimes [\iota_b]\otimes f_!^{ad}=\\
          &  (\varrho(\sigma(\slashed{D}_{\partial_f}),\slashed{D}_{\partial f})\otimes Bott_{ad}) \otimes[\Delta]\otimes [\iota_b]\otimes f_!^{ad}=\\
              &    \varrho(\sigma(\slashed{D}_{\partial_f}),\slashed{D}_{\partial f})\otimes Bott_{ad} \otimes[\Delta]\otimes  (\partial f\times \mathrm{id}_{(0,1)})_!^{ad}=\\
                &    \varrho(\sigma(\slashed{D}_{\partial_f}),\slashed{D}_{\partial f})\otimes Bott_{ad} \otimes  (\partial f_!^{ad}\otimes c_!^{ad})=\\
                 &    \varrho(\sigma(\slashed{D}_{\partial_f}),\slashed{D}_{\partial f}) \otimes  \partial f_!^{ad}
  	\end{split}
	\end{equation*}
	 where $\partial_b$ is as in Remark \ref{b-boundary}, $s_{ad}$ and $Bott_{ad} $ are as in the proof of Proposition \ref{welldefinedrho},
	 $c$ is the map from $(0,1)$ to the point. Moreover we used the fact that the inclusion $[\iota_b]$ is equal to the shriek map associated to the inclusion of the open collar neighbourhood of $\partial W$ into $\mathring{W}$, whose composition with $f$ is given by $\partial f\times \mathrm{id}_{(0,1)}$.

	\end{proof}
	
\begin{remark}  
	We can generalise the above construction when $\mathcal{BG}\rightrightarrows BG$ is a limit of smooth manifolds. Assume that the Lie groupoid is transitive, then we can forget about transversality of the maps. Using the Milnor's join construction, we have tat the classifying space $BG$ is the colimit of finite dimensional CW-complexes. Since any finite CW-complex is homotopic to a smooth manifold, it follows that there is an inductive system $\{M_i, f_{ij}, f_i\}$ which defines $BG$ and then one can build the relevant K-groups using inductive limits through lower-shriek maps associated to the maps $f_{ij}$.
The more general case where the groupoid is not transitive presents some more issues about transversality, we will treat it in a different work.

\end{remark}

\subsection{Products}

Let $G\rightrightarrows X$ and $H\rightrightarrows Y$ be two Lie groupoids. In this section we will define an external product between the K-theory of the C*-algebra of the adiabatic deformation of $G$ and the K-theory of C*-algebra of the Lie algebroid of $H$, valued in the C*-algebra of the adiabatic deformation of $G\times H$.
Moreover all along this section we will use minimal tensor product of C*-algebras.

Let us build a KK-class $\alpha\in KK\left(C^*_r(G_{ad}^\circ)\otimes C^*_r(\mathfrak{A}(H)),C^*_r((G\times H)_{ad}^\circ)\right)$ in the following way:
notice that 
\[\mathrm{id}\otimes \mathrm{ev}_0\colon C^*_r(G_{ad}^\circ)\otimes C^*_r(H_{ad})\to C^*_r(G_{ad}^\circ)\otimes C^*_r(\mathfrak{A}(H)) \]
induces a KK-equivalence;
moreover, since $ C^*_r(G_{ad}^\circ)\otimes C^*_r(H_{ad})= C^*_r(G_{ad}^\circ\times H_{ad})$, we have a $C_0([0,1)\times[0,1])$-algebra and the restriction to the diagonal of $[0,1)\times[0,1]$ induces a KK-element 
$[\Delta]\in KK\left(C^*_r(G_{ad}^\circ\times H_{ad}),C^*_r((G\times H)_{ad}^\circ)\right)$.

Thus we can define  the class $\alpha$ as the Kasparov product 
\[[\mathrm{id}\otimes \mathrm{ev}_0]^{-1}\otimes_{C^*_r(G_{ad}^\circ\times H_{ad})}[\Delta]\in KK\left(C^*_r(G_{ad}^\circ)\otimes C^*_r(\mathfrak{A}(H)),C^*_r((G\times H)_{ad}^\circ)\right).\]

\begin{definition}\label{productgr}
	The external product
	\[\boxtimes\colon KK_i\left(\CC,C^*_r(G_{ad}^\circ)\right)\times KK_j\left(\CC,C^*_r(\mathfrak{A}(H))\right)\to KK_{i+j}\left(\CC,C^*_r((G\times H)_{ad}^\circ)\right)\]
	is defined as the map 
	\[x\times y\to(x\otimes_\CC y)\otimes_{D}\alpha,\]
	where $D=C^*_r(G_{ad}^\circ)\otimes C^*_r(\mathfrak{A}(H))$.
\end{definition}
Now let us fix an element $y\in K_n(C^*_r(\mathfrak{A}(H)))$.
We want to investigate the injectivity of the following map
\[
K_*(C^*_r(G_{ad}^\circ))\to K_{*+n}(C^*_r((G\times H)_{ad}^\circ))
\]
 given by the external product with $y$.
 
To do it, let us construct an element \[\beta\in KK\left(C^*_r((G\times H)_{ad}^\circ), C^*_r(G_{ad}^\circ)\otimes C^*_r(H)\right).\]

Let $T$ be the restriction of $G_{ad}^\circ\times H_{ad}$ to
$X\times Y\times \mathfrak{T} $, where $\mathfrak{T}$ is the triangle \[\mathfrak{T}:=\{s\geq t\,|\,(t,s)\in [0,1)\times[0,1] \}.\]

\begin{lemma}
	Denote the restriction map of $T$ to the diagonal side of $\mathfrak{T}$ by $\Delta'$. Then it induces a KK-equivalence
	\[[\Delta']\in KK(C^*_r(T),C^*_r((G\times H)_{ad}^\circ)).\]
\end{lemma}
\begin{proof}
	Observe that $C^*_r(T)$ is a $C_0(\mathfrak{T})$-algebra and that, for this reason, the restriction to the diagonal gives an exact sequence of reduced C*-algebras.
	Indeed the kernel of the restiction morphism turns out to be isomorphic to the C*-algebra $C^*_r(G)\otimes C^*_r(H_{ad}^\circ)\otimes C[0,1)$, that is K-contractible.
	So the only thing to prove is that the restriction admits a completely positive section.
	
	Let $\mathfrak{Q}$ be the set $\{(t,s)\in[0,1]^2\setminus \{(1,1)\}\}$ 
	and let $\lambda\colon X\times Y\times \mathfrak{Q}\to X\times Y\times\mathfrak{T}$
	be the map that sends $(x,y,(t,s))$ to $(x,y,(ts,s))$.
Define $Q$  to be the pull-back groupoid $T^\lambda_\lambda$  over $X\times Y\times\mathfrak{Q}$: notice that  the restriction of $Q$  to $X\times Y\times\{(t,s)\}$ is equal to the restriction of $T$ to 
	$X\times Y\times\{(ts,s)\}$. 
	
	Since  $\lambda$ is proper, we have that $\lambda\colon C_c^\infty(T)\to C_c^\infty (Q)$ is well defined
	and it is an isometry with respect to the reduced C*-norm: this follows by the construction of $Q$ as a pull-back groupoid and by the definition of the reduced norm.  In particular it extends to a *-homomorphism between the reduced C*-algebras. The image of $\lambda^*$ is contained in the *-subalgebra of $C_c^\infty (Q)$ whose elements are functions constant on $\{s=0\}$; let us call $A$ its closure in $C^*_r(Q)$.
	We have the following diagram with exact rows:
	\[
\xymatrix{ 0\ar[r]&C^*_r(T_{\{s\neq0\}})\ar[d]^{\lambda^*}\ar[r]&C^*_r(T)\ar[d]^{\lambda^*}\ar[r]&C^*_r(T_{\{s=0\}})\ar[d]^{\lambda^*}\ar[r]&0\\
	0\ar[r]& A_{\{s\neq0\}}\ar[r]&A\ar[r]&A_{\{s=0\}}\ar[r]&0}.
	\]
Observe that, since the restriction of $\lambda$  to $X\times Y\times(\mathfrak{Q}\setminus\{s=0\})$ is a diffeomorphism, the first vertical arrow is an isomorphism of C*-algebras. By definition of $A$ also the third one is an isomorphism. By the Five Lemma  we have that $C^*_r(T)\cong A$.
	
	Moreover the restriction of $C^*_r(T)$ to the diagonal, corresponds to the restriction of $A$ to the union of the bottom side $\mathfrak{B}$ and the right side $\mathfrak{R}:=\{t=1\}$ of $\mathfrak{Q}$.
	The restriction to $\mathfrak{B}$ of $Q$ is amenable  since it is the product of vector bundles $\mathfrak{A}H\times\mathfrak{A}G$, then the *-homomorphism induced on the reduced C*-algebras admits a completely positive lifting.
	The same is true for the restriction to $\mathfrak{R}$: 
	since $C^*_r(Q)$ contains as ideal $ C^*_r((G\times H)_{ad}^\circ)\otimes C_0(0,1]$ and $\mathfrak{R}$ is $(G\times H)_{ad}^\circ\times\{1\}$, the map $\xi\mapsto t\cdot\xi$ is a completely positive section of the restriction to the right side.
	
	Using Lemma \ref{cplift}, we obtain a completely positive section for the restriction to $\mathfrak{B}\cup\mathfrak{R}$ and then a completely positive section for $\Delta'$.
\end{proof}
Then let us define $\beta$ as the Kasparov product
\[
[\Delta']^{-1}\otimes_{C^*_r(T)}[\mathrm{ev}_{\{s=1\}}]\in KK\left(C^*_r((G\times H)_{ad}^\circ), C^*_r(G_{ad}^\circ)\otimes C^*_r(H)\right).
\]
Let us calculate $\alpha\otimes_{C^*_r\left((G\times H)_{ad}^\circ\right)}\beta\in KK\left(C^*_r(G_{ad}^\circ)\otimes C^*_r(\mathfrak{A}(H)) ,C^*_r(G_{ad}^\circ)\otimes C^*_r(H)\right)$:

\begin{equation*}
\begin{split}
\alpha\otimes\beta&=[\mathrm{id}\otimes\mathrm{ev}_0]^{-1}\otimes[\Delta]\otimes [\Delta']^{-1}[\mathrm{ev}_{\{s=1\}}]=\\
&=[\mathrm{id}\otimes\mathrm{ev}_0]^{-1}\otimes[\mathrm{ev}_{\mathfrak{T}}]\otimes[\mathrm{ev}_{\{s=1\}}]=\\
&=[\mathrm{id}\otimes\mathrm{ev}_0]^{-1}\otimes[\mathrm{id}\otimes\mathrm{ev}_1]=\\
&=\mathrm{id}_{C^*_r(G_{ad}^\circ)}\otimes \mathrm{Ind}_H
\end{split}
\end{equation*}
where $ \mathrm{ev}_{\mathfrak{T}}$ is the restriction from $C^*_r(G_{ad}^\circ)\otimes C^*_r(H_{ad})$ to $C^*_r(T)$ and we used that $\mathrm{ev}_{\mathfrak{T}}\circ\Delta'=\Delta$ and that $\mathrm{ev}_{\{s=1\}}\circ \mathrm{ev}_{\mathfrak{T}}=\mathrm{id}\otimes\mathrm{ev}_1$, moreover
$\mathrm{Ind}_H\in KK\left(C^*_r(\mathfrak{A}(H),C^*_r(H))\right)$ is the index KK-class as in the Remark \ref{adiabaticindex}.

\begin{lemma}\label{injprod}
	Let $y$ be a class in  $K_i\left(\mathfrak{A}(H)\right)$. Assume that there extists a K-homology class $\eta\in KK\left(C^*_r(H),\CC\right)$ such that
	\[y\otimes_{C^*_r(\mathfrak{A}(H))}\mathrm{Ind}_H\otimes_{C^*_r(H)}\eta=n\in \ZZ,\] with $n\neq0$, then
	the map $K_j\left(C^*_r(G_{ad}^\circ)\right)\to K_{i+j}\left(C^*_r((G\times H)_{ad}^\circ)\right)$ given by 
	\[ x\mapsto x\boxtimes y\] 
	is rationally injective.
	If $n=1$, then the map is honestly injective. 
\end{lemma}
\begin{proof}
	From the previous discussion we have that
	
	\begin{equation}
	\begin{split}
	&(x\boxtimes y)\otimes_{C^*_r((G\times H)_{ad}^\circ)}\beta\otimes_{C^*_r(H)}\eta=\\
	&x\otimes_\CC y\otimes _D\alpha\otimes_{C^*_r((G\times H)_{ad}^\circ)}\beta\otimes_{C^*_r(H)}\eta=\\
	&x\otimes_\CC y\otimes_{C^*_r(\mathfrak{A}(H))} \mathrm{Ind}_H\otimes_{C^*_r(H)}\eta=n\cdot x.
	\end{split}
	\end{equation}
	
	So, if we $n$, we have that the exterior product with $y$ is rationally injective and that if $n=1$ it is injective.
\end{proof}

\begin{remark}
If $H$ is the Lie groupoid $\widetilde{Y}\times_\Lambda\widetilde{Y}$ associated to a Galois $\Lambda$-covering $\widetilde{Y}$ of $Y$ and $G=\widetilde{X}\times_\Gamma\widetilde{X}$, we recover \cite[Proposition 5.19]{zenobi}.
Another related result in this context is given by \cite[Corollary 5.8]{zeidler} which is in some sense complementary in the assumptions:  apart from the fact that Zeidler deals with partial secondary invariants, the difference between the two results is that in this paper we do not assume anything on the nature of the manifold $Y$, but we have some assumptions on the group $\Gamma$ (such as for instance K-amenabilty, see \cite[Section 5]{zenobi}); whereas Zeidler assume that the manifold $Y$ is hypereuclidean and that the group $\Lambda$ is any group.
\end{remark}
%
	\subsubsection{Product formulas for secondary invariants}

	Now we would like to apply the product in Definition \ref{productgr} to the $\varrho$-invariant of Definition \ref{rhosignature} and \eqref{rhodiracgr}.
	
	\begin{proposition}
		Let $G\rightrightarrows X$ and $H\rightrightarrows Y$ be two Lie groupoids with oriented Lie algebroid, homotopy equivalent by means of the oriented groupoid morphism $\varphi\colon H \to G$. Let $J\rightrightarrows Z$ be another Lie groupoid with oriented Lie algebroid.
		Consider the secondary invariant $\varrho(\varphi)\in K_i(C^*_r(G_{ad}^\circ))$ and the symbol class of the $J$-signature operator on $Z$, given by $[\sigma_J]\in K_j(C^*_r(\mathfrak{A}(J)))$.
		
		Then we have the following product formula
		\[
		\varrho(\varphi)\boxtimes [\sigma_J]=\varrho(\varphi\times\mathrm{id}_J)\in K_{i+j}\left((G\times J)_{ad}^\circ\right),
		\]
		where $\varphi\times \mathrm{id}_J$ is a homotopy equivalence between $H\times J$ and $G\times J$.
	\end{proposition}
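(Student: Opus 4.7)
The plan is to reduce the product formula to three largely independent compatibilities: a product decomposition of the pulled-back groupoid, a product decomposition of the Hilsum--Skandalis perturbation, and the compatibility of the adiabatic lower shriek $\psi_!^{ad}$ with external products.

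First, I would observe that the groupoid $L' := (G\times J)^{\varphi\times\mathrm{id}_Z\cup\mathrm{id}_{X\times Z}}_{\varphi\times\mathrm{id}_Z\cup\mathrm{id}_{X\times Z}}$ is canonically isomorphic to $L\times J$, where $L=G^{\varphi\cup\mathrm{id}_X}_{\varphi\cup\mathrm{id}_X}$. Under this identification, the signature $L'$-operator decomposes as the external Kasparov product of $D_L$ with the $J$-signature operator on the unit space, and the module $\mathcal{E}(L')$ splits as $\mathcal{E}(L)\,\hat\otimes\,\Lambda_{\CC}\mathfrak{A}^*(J)$ compatibly with the quadratic form of \eqref{quadratic}.

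Next, I would show that the Hilsum--Skandalis perturbation used in Theorem \ref{HilSk} can be chosen multiplicatively for $\varphi\times\mathrm{id}_J$. Concretely, if $E\to Y$ trivialises $\varphi^*\mathfrak{A}G$, then $E\to Y\times Z$ (pulled back) trivialises $(\varphi\times\mathrm{id}_J)^*\mathfrak{A}(G\times J)$ up to the identity on $\mathfrak{A}J$, and the integration operator $\mathcal{T}$ of Theorem \ref{HilSk} built from this data is exactly the tensor product of the $\mathcal{T}$ used for $\varphi$ with the identity on $\Lambda_\CC\mathfrak{A}^*J$. Consequently, the path $D^{HS}_{L',t}$ can be taken to be the obvious path $D^{HS}_{L,t}\,\hat\otimes\,1 + 1\,\hat\otimes\,\sigma(D_J)$ on the Lie algebroid direction of $J$. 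Passing through the construction of Remark \ref{ubrho} (adiabatic deformation in the $L$-direction, left unchanged in the $J$-direction), this yields the identity
\begin{equation*}
[D^{HS}_{L',t}] \;=\; [D^{HS}_{L,t}]\boxtimes[\sigma(D_J)] \;\in\; K_*\!\left(C^*_r(L_{ad}^\circ)\otimes C^*_r(\mathfrak{A}(J))\right),
\end{equation*}
where I use that $L'_{ad}^\circ$ is the restriction of $L_{ad}\times J_{ad}$ to the diagonal in $[0,1]^2$, matching the KK-class $\alpha$ of Definition \ref{productgr}.

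Finally, I would establish the naturality statement
\begin{equation*}
\big(x\otimes \psi_!^{ad}\big)\boxtimes y \;=\; (x\boxtimes y)\otimes (\psi\times\mathrm{id}_Z)_!^{ad}
\end{equation*}
for $x\in K_*(C^*_r(H_{ad}^\circ))$, $y\in K_*(C^*_r(\mathfrak{A}(J)))$ and $\psi$ transverse with respect to $G$. Unwinding the definitions, both sides are represented by the same double adiabatic-plus-diagonal deformation groupoid over $Y\times Z\times[0,1]^3$; the equality then follows by combining Proposition \ref{functoriality} with the fact that the diagonal embedding of $[0,1)$ into $[0,1)\times[0,1]$ commutes with the pull-back construction along $\psi\times\mathrm{id}_Z$. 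Applying this to $\psi=\varphi\cup\mathrm{id}_X$, $x=[D^{HS}_{L,t}]$ and $y=[\sigma_J]$ gives the required formula.

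The main obstacle will be the second step: keeping careful track of the bimodule structure and the adjoint $T$ with respect to the quadratic form when passing from $L$ to $L\times J$, and verifying that the explicit perturbation constructed in the proof of Theorem \ref{HilSk} (the operator $\mathcal{Y}$ and the product $\mathcal{T}^*\mathcal{T}$) really does factor through the tensor product of the $L$-perturbation with the identity on $\Lambda_\CC\mathfrak{A}^*J$. The first and third steps are essentially formal once the Hilsum--Skandalis perturbation has been identified as a tensor product at the operator level.
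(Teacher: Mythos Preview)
Your proposal is correct and follows essentially the same strategy as the paper: reduce to (i) the identification $[D^{HS}_{L\times J,t}]=[D^{HS}_{L,t}]\boxtimes[\sigma_J]$ and (ii) the naturality of $\psi_!^{ad}$ with respect to the product $\boxtimes$. The paper dispatches (i) in one line (``it only remains to notice that\ldots''), while you give the operator-level justification; conversely, for (ii) the paper does not invoke Proposition~\ref{functoriality} but instead splits the compatibility into two separate commutations---one of $(\varphi\cup\mathrm{id}_X)_!^{ad}\otimes(\mathrm{id}_Z)_!$ past $[\mathrm{id}\otimes\mathrm{ev}_0]^{-1}$ using Remark~\ref{rmkshriek}, and one of $(\varphi\cup\mathrm{id}_X)_!^{ad}\otimes(\mathrm{id}_Z)^{ad}_!$ past $\Delta$---which is cleaner than your appeal to a triple-parameter deformation groupoid.
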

	
	\begin{proof}
		If $L= G_{\varphi\cup\mathrm{id}_X}^{\varphi\cup\mathrm{id}_X}$, then $\varrho(\varphi)=[D_{L,t}^{HS}]\otimes(\varphi\cup\mathrm{id}_X)_!^{ad}$.
		Consequently, following the notations of Definition \ref{productgr}, one has that
		$\varrho(\varphi)\boxtimes [\sigma_J]$ is equal to
		\[
		\Big(\left([D_{L,t}^{HS}]\otimes(\varphi\cup\mathrm{id}_X)_!^{ad}\right)\otimes_\CC[\sigma_J]\Big)\otimes_{D}[\mathrm{id}\otimes\mathrm{ev}_0]^{-1}\otimes_{D'}\Delta,
		\] 
		where $D=C^*_r(G_{ad}^\circ)\otimes C^*_r(\mathfrak{A}(J))$ and
		$D'=C^*_r(G_{ad}^\circ)\otimes C^*_r(J_{ad})$.
		
		That is equal to 
		\[
		\Big(\left([D_{L,t}^{HS}]\otimes_\CC[\sigma_J]\right)\otimes\left((\varphi\cup\mathrm{id}_X)_!^{ad}\otimes(\mathrm{id}_Z)_!\right)\Big)\otimes_{D}[\mathrm{id}\otimes\mathrm{ev}_0]^{-1}\otimes_{D'}\Delta,
		\] 
		Notice that  the following equalities holds:
		
		\begin{itemize}
			\item
			by Remark \ref{rmkshriek} we have that
			\[
			\left((\varphi\cup\mathrm{id}_X)_!^{ad}\otimes(\mathrm{id}_Z)_!\right)\otimes_{D}[\mathrm{id}\otimes\mathrm{ev}_0]^{-1}=[\mathrm{id}\otimes\mathrm{ev}_0]^{-1}\otimes\left((\varphi\cup\mathrm{id}_X)_!^{ad}\otimes(\mathrm{id}_Z)^{ad}_!\right);
			\]
			\item 
			moreover  it is easy to verify that
			\[
			\left((\varphi\cup\mathrm{id}_X)_!^{ad}\otimes(\mathrm{id}_Z)^{ad}_!\right)\otimes \Delta=\Delta\otimes(\varphi\times\mathrm{id}_Z\cup\mathrm{id}_{X\times Z})_!^{ad}.
			\]
		\end{itemize}
		
		Then it turns out that
		\[
		\varrho(\varphi)\boxtimes [\sigma_J]=[D_{L,t}^{HS}]\boxtimes[\sigma_J]\otimes(\varphi\times\mathrm{id}_Z\cup\mathrm{id}_{X\times Z})_!^{ad}.
		\]
		So it only remains to notice that $[D_{L,t}^{HS}]\boxtimes[\sigma_J]= [D_{L\times J}^{HS}]\in K_{i+j}\big(C^*_r((L\times J)_{ad}^\circ)\big)$. This follows from the fact that if $\mathcal{T}$ is the Hilsum-Skandalis perturbation associated to $\varphi$ defined in the proof of Theorem 	\ref{HilSk}, then $\mathcal{T}\otimes\mathrm{id}$ is the Hilsum-Skandalis perturbation associated to $\varphi\times\mathrm{id}_J$.	
	\end{proof}

	One can similarly prove the analogous result for Dirac operators.
	
	\begin{proposition}
		Let $G\rightrightarrows X$ and $H\rightrightarrows Y$ be two Lie groupoids such that both $BG$ and $BH$ are smooth manifolds.
		Let $(M,f,g)$ be a triple in $\mathrm{Pos}^{spin}_n(BG)$ and let $(N,f')$ be an element in $\Omega^{spin}_m(BH)$.
		Then we have that 
		\[
		\varrho(M,f,g)\boxtimes\beta(N,f')=\varrho(M\times N,f\times f',g\oplus h)\in K_{n+m}\left(C^*_r((\mathcal{BG}\times\mathcal{BH})_{ad}^\circ)\right)
		\]
		where $h$ is any metric on $\mathfrak{A}(\mathcal{BH}_{f'}^{f'})$ such that $g\oplus h$ on $\mathfrak{A}(\mathcal{BG}_{f}^{f})\oplus\mathfrak{A}(\mathcal{BH}_{f'}^{f'})$ has positive scalar curvature.
	\end{proposition}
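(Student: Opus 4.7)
The strategy mirrors the signature case but with the advantage that no Hilsum--Skandalis homotopy is needed: the invertibility of the deformation operator is built in by positive scalar curvature. First unwind the definitions:
\[
\varrho(M,f,g)=\varrho(\sigma(\slashed{D}_f),\slashed{D}_f)\otimes f_!^{ad},\qquad \beta(N,f')=[\hat{\sigma}(\slashed{D}_{f'})]\otimes df'_!,
\]
and write the product $\varrho(M,f,g)\boxtimes\beta(N,f')$ by substituting these expressions. Let $E=\mathcal{BG}^f_f$ and $E'=\mathcal{BH}^{f'}_{f'}$.

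The first step is to commute the lower shriek maps past the external product. More precisely, I would show that for classes $x\in K_*(C^*_r(E_{ad}^\circ))$ and $y\in K_*(C^*_r(\mathfrak{A}(E')))$ and for a transverse map $\varphi\times\varphi'\colon M\times N\to BG\times BH$, one has
\[
\big(x\otimes f_!^{ad}\big)\boxtimes\big(y\otimes df'_!\big)=\big(x\boxtimes y\big)\otimes (f\times f')_!^{ad}.
\]
This is the analogue of the step used in the previous proposition and follows from Remark \ref{rmkshriek} together with the fact that the construction of $\boxtimes$ (via $[\mathrm{id}\otimes\mathrm{ev}_0]^{-1}\otimes[\Delta]$) is natural under products of adiabatic deformation groupoids and the associated pull-back/lower shriek machinery developed in Section \ref{section}. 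Concretely, the pull-back of the product groupoid $\mathcal{BG}\times\mathcal{BH}$ along $f\times f'$ is the product $E\times E'$, and the square built from $f_!^{ad}$ and $df'_!$ commutes with the restriction to the diagonal because both are induced by the same underlying deformation construction.

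This reduces the statement to the purely analytic identity
\[
\varrho(\sigma(\slashed{D}_f),\slashed{D}_f)\boxtimes[\hat{\sigma}(\slashed{D}_{f'})]=\varrho(\sigma(\slashed{D}_{f\times f'}),\slashed{D}_{f\times f'})\in K_{n+m}\left(C^*_r((E\times E')_{ad}^\circ)\right),
\]
where on the right the Dirac operator is built from the metric $g\oplus h$ on the sum of algebroids. For this I would invoke the classical product formula for Dirac operators: with the grading $\slashed{S}_{E\times E'}\cong\slashed{S}_E\hat{\otimes}\slashed{S}_{E'}$, one has $\slashed{D}_{f\times f'}=\slashed{D}_f\hat{\otimes}1+1\hat{\otimes}\slashed{D}_{f'}$, and the Weitzenbock formula gives that the scalar curvature of $g\oplus h$ is the sum, so positivity of this sum is precisely the hypothesis. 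Under the adiabatic deformation, the operator $(\slashed{D}_{f\times f'})_{ad}$ decomposes as the unbounded Kasparov product of $(\slashed{D}_f)_{ad}$ and $\slashed{D}_{f'}$ (viewed as an algebroid operator after evaluation at $0$), which is exactly what the class $\boxtimes$ computes after identifying $\alpha$ with the external Kasparov product.

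The main obstacle is to make this identification rigorous at the level of unbounded Kasparov $\CC$--$C^*_r((E\times E')_{ad}^\circ)$-bimodules: one must match the bounded transform of the perturbed deformation operator (obtained by concatenating $\psi(\slashed{D}'_{ad})$ with the chopping family $\psi_s(\slashed{D}_{f\times f'})$ of Remark \ref{ubrho}) with the exterior Kasparov product of the analogous bounded transforms for the two factors. I would handle this by an explicit homotopy of unbounded cycles, in the spirit of \cite{BJ}, using that both constructions yield the same symbol at $t=0$ and that the invertibility at $t=1$ is controlled by the same positive Weitzenbock term $R(\nabla^{g\oplus h})>0$; by the Kucerovsky-type criterion for unbounded Kasparov products adapted to the mapping cone, these two cycles represent the same class in the relative K-group, completing the proof.
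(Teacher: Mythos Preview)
Your proposal is correct and follows essentially the same approach as the paper: the paper itself gives no detailed proof here, only the remark that ``one can similarly prove the analogous result for Dirac operators,'' referring back to the signature case. Your two steps---commuting the lower shriek maps past the product $\boxtimes$, then identifying the remaining analytic class $\varrho(\sigma(\slashed{D}_f),\slashed{D}_f)\boxtimes[\hat{\sigma}(\slashed{D}_{f'})]$ with $\varrho(\sigma(\slashed{D}_{f\times f'}),\slashed{D}_{f\times f'})$---mirror exactly the structure of the signature proof, with the simplification you note (no Hilsum--Skandalis perturbation is needed since positivity of scalar curvature gives invertibility directly). One minor remark: in the signature proof the second shriek map is $(\mathrm{id}_Z)_!$ and hence trivial, whereas here $df'_!$ is a genuine lower shriek, so your compatibility formula $(x\otimes f_!^{ad})\boxtimes(y\otimes df'_!)=(x\boxtimes y)\otimes(f\times f')_!^{ad}$ is a mild extension, but it follows from the same naturality arguments. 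Your invocation of an unbounded Kasparov product criterion for the final identification is more explicit than the paper's ``it only remains to notice that,'' but not a different route.
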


	\section{Foliated bundles} \label{foliated-bundles}
	A general reference for this Section is given by \cite{benameur-piazza}.
	Let $M$ be a smooth compact manifold with fundamental group $\Gamma$ and universal covering $\widetilde{M}$. Let $T$ be a compact manifold such that $\Gamma$ acts by diffeomorphisms on it. 
	Hence $\Gamma$ acts freely and properly on $\widetilde{M}\times T$ by the formula $(\widetilde{m},\theta)\cdot\gamma=(\widetilde{m}\cdot \gamma, \gamma^{-1}\cdot \theta)$. Denote by
	$V$ the quotient.
	
	If $p\colon \widetilde{M}\times T\to V  $ is the natural projection then the leaves of a foliation $\mathcal{F}$ on V are given by the
projections $L_\theta=p(\widetilde{M}\times\{\theta\})$, where $\theta$ runs through the compact manifold $T$.
If $\Gamma(\theta)$ is the isotropy group of $\theta\in T$ then $L_\theta$ is diffeomorphic
to the quotient manifold $\widetilde{M}\times\Gamma(\theta)$.
It is not difficult to see that the monodromy groupoid associated to this foliation is given by $Mon(V,\mathcal{F})=\widetilde{M}\times\widetilde{M}\times T/\Gamma\rightrightarrows V$.
Moreover notice that $C^*(Mon(V,\mathcal{F}))$ is Morita equivalent to $C^*(T\rtimes\Gamma)$, the C*-algebra of the crossed product groupoid $T\rtimes\Gamma\rightrightarrows T$.

\begin{definition}
	Let $F$ be a fundamental domain for the action of $\Gamma$ on $\widetilde{M}$.
	We have the following traces on $C_c^\infty(Mon(V,\mathcal{F}))$ 
	\begin{itemize}
		\item $\tau_{reg}\colon f\mapsto \int_{F\times T} f([\widetilde{m},\widetilde{m}, \theta]) $;
		\item $\tau_{av}\colon f\mapsto \int_{F\times T} \sum_{\gamma\in \Gamma(\theta)}f([\widetilde{m},\widetilde{m}\cdot \gamma, \theta]) $;
	\end{itemize}
	for $f\in C_c^\infty(Mon(V,\mathcal{F}))$.
We also have the following traces on $C_c^\infty(T\rtimes\Gamma)$:
	\begin{itemize}
		\item $\tau_{reg}\colon f\mapsto \int_{T} f(\theta,e) $;
		\item $\tau_{av}\colon f\mapsto \int_{T} \sum_{\gamma\in \Gamma(\theta)}f(\theta,\gamma)$;
	\end{itemize}
	for  $f\in C_c^\infty(T\rtimes\Gamma)$ .
\end{definition}
One can prove that  $\tau_{reg}$ extends to the reduced groupoid C*-algebra and that $\tau_{av}$ extends to the maximal groupoid C*-algebra and that we have the following commutative triangles
\begin{equation}\label{morita-traces}
\xymatrix{K_*(C^*_r(Mon(V,\mathcal{F})))\ar[r]^(.7){\tau_{reg}}\ar[d]_{\mathcal{M}_r}& \RR\\
	            K_*(C^*_r(T\rtimes\Gamma))\ar[ru]_{\tau_{reg}}} \quad\quad \xymatrix{K_*(C^*(Mon(V,\mathcal{F})))\ar[r]^(.7){\tau_{av}}\ar[d]_{\mathcal{M}}& \RR\\
	            K_*(C^*(T\rtimes\Gamma))\ar[ru]_{\tau_{av}}} 
\end{equation}
where $\mathcal{M}_r$ and $\mathcal{M}$ are the Morita equivalences. See \cite[Proposition 2.10]{benameur-piazza}.

\begin{proposition}
	Let $G\rightrightarrows X$ be a Lie groupoid. Then every element in the image of $\mathrm{Ind}_G\colon K_0(C^*(\mathfrak{A}G))\to K_0(C^*(G))$ can be represented by an element in $C^*_c(G)$, with support arbitrarily closed to $X$.
\end{proposition}
\begin{proof}
Recall from \cite[Section 2]{AS1} that $K_0(C^*(\mathfrak{A}G))\simeq K_0(C_0(\mathfrak{A}^*G))$	is generated by 0-order elliptic symbols. Let $\sigma$ be an elliptic symbol on $\mathfrak{A}^*G$, then the operator $P_{ad}$, constructed as in \eqref{adoperator1} and \eqref{adoperator2}
Notice that $P_{ad}$ is in $\Psi_c^0(G_{ad})$ and it is elliptic. By \cite[Proposition 19]{vas} we have that there exists $Q_{ad}\in \Psi_c^0(G_{ad})$ such that $I-P_{ad}Q_{ad}= R_{ad}$ and $I-Q_{ad}P_{ad}=S_{ad}$ with $R_{ad},S_{ad}\in C_c^{\infty}(G_{ad})$. 

Recall that the odd operator $\tiny{\begin{pmatrix}0 & P_{ad}\\ Q_{ad}& 0\end{pmatrix}}$ represents the image of $[\hat{\sigma}]$
through the KK-equivalence $[\mathrm{ev}_0]^{-1}\colon KK(\CC,C^*(\mathfrak{A}G))\to KK(\CC,C^*(G_{ad}))$.
Moreover $\mathrm{ev}_t\colon C^*(G_{ad})\to C^*(G)$ induces the same homomorphism in K-theory for $t\in (0,1]$. Because of the topology of $G_{ad}$, a compact set in $G_{ad}$ concentrates near $X$ when $t$ goes to 0. Then the support of $R_{ad}$ and $S_{ad}$ concentrates near the diagonal for $t$ small. 

Now observe that the index class in $K_0(C^*(G))$ is given by $[p_t]-[q]$, where
\[
p_t=\begin{pmatrix}R_t^2 & R_t(1+R_t)Q_t\\ S_tP_t& 1-S_t^2\end{pmatrix} \quad\quad q=\begin{pmatrix}0 & 0\\ 0& 1\end{pmatrix}
\] 
and $T_t$ is the evaluation at $t$ of $T_{ad}$, for $T= P,Q,R,S$.
Taking $t\neq0$ arbitrarily small, we obtain the desired result.
\end{proof}

\begin{corollary}
	Let $G$ be the monodromy groupoid $Mon(V,\mathcal{F})$.
	If $x\in K_0(C^*(G))$ is in the image of $\mathrm{Ind}_G$, then 
	\[
	\tau_{av}(x)=\tau_{reg}(x).
	\]
\end{corollary}

\begin{proof}
	By the previous proposition, since we can chose as a representative of $x$ an element $f\in C_c^\infty(Mon(V,\mathcal{F}))$ with support arbitrarily closed to the diagonal, the only contribution in the sum in $\tau_{av}$ is given by the identity element of 
	$\Gamma$. Then the corollary follows.
\end{proof}

\begin{remark}
	This corollay is the analogous of \cite[Theorem A.1]{WY}, but applied the monodromy groupoid of foliated bundles and what follows is directly inspired by the methods used in that article.
\end{remark}

Now assume that $\gamma\in \Gamma$ is torsion of order $n$ and that $\gamma\in \Gamma(\theta)$ for all $\theta\in T$. The element 
\[p_\gamma:=\frac{1}{n}\sum_{i=1}^n\delta_{\gamma^{i}}\]
is a projection in $C^*(T\rtimes\Gamma)$ such that $\tau_{reg}(p_\gamma)= \frac{1}{n}$ and $\tau_{av}(p_\gamma)=1$.  By \eqref{morita-traces}, it follows that $\mathcal{M}^{-1}[p_\gamma]\in K_0(C^*(Mon(V,\mathcal{F})))$ is not in the image of $\mathrm{Ind}_{Mon(V,\mathcal{F})}$.

Let   $\iota\colon C^*(Mon(V,\mathcal{F}))\otimes C_0(0,1)\to C^*(Mon(V,\mathcal{F})_{ad}^\circ)$ be the obvious inclusion, then the previous discussion implies that
$\iota_*(Bott^{-1}(\mathcal{M}^{-1}[p_\gamma]))$ is not the zero class, because \eqref{AES} is exact.

Let us consider the two following geometric situation.
\begin{itemize}
	\item Let $(W, f\colon W\to V, g)$ be an element in $R^{spin}_{2n}(Mon(V,\mathcal{F}))$ such that 
 $\partial W\neq\emptyset$ and	$\mathrm{Ind}_{Mon(V,\mathcal{F})}(W, f\colon W\to V, g)=Bott^{-1}(\mathcal{M}^{-1}[p_\gamma])$. Then, by the commutativity of \eqref{stolzgr},
 $\varrho(\partial W, f_{|\partial W}\colon \partial W\to V, g_{\partial W})$ is non zero.
 If in particular $W=V\times[0,1]$, $(V,\mathcal{F})$ is a spin foliation and $f$ is the projection on $V$, this would imply that $\varrho(g_0)\neq\varrho(g_1)$ and that $Pos^{spin}_{n}(Mon(V,\mathcal{F}))$ has infinite elements.
 
 \item Assume that $(V,\mathcal{F})$ is an oriented foliation such that the dimension of the leaves is odd. Let $W$ be a cobordism between $V$ and a manifold $V'$ and let $\mathcal{H}$ an oriented foliation on $W$, transverse to the boundary, such that $\mathcal{H}_{|V}=\mathcal{F}$. Assume that there is a function $F\colon W\to V\times[0,1]$ that is transverse to the foliation $(V\times[0,1], \mathcal{F}\times[0,1])$, that is the pull-back foliation of $\mathcal{F}$ along the projection  $V\times[0,1]\to V$. Moreover suppose that $F_{|V'}$ is a foliated homotopy equivalence and that $F_{|V}$ is the identity on $V$. 
 
Consider the signature $G$-operator $D$ where $G$ is the groupoid $Mon(V\times[0,1],\mathcal{F}\times[0,1])^{F\times\mathrm{id}}_{F\times\mathrm{id}}\rightrightarrows W\cup V\times[0,1]$. Because on the boundary we have that $F$ is a homotopy equivalence, we can perturb the signature operator and consider the push-forward of its $G$-index in $K_0(C^*(Mon(V,\mathcal{F}))\times C_0(0,1)$ is $Bott^{-1}(\mathcal{M}^{-1}[p_\gamma])$. 
Using Theorem \ref{APSdelocgr} we obtain that $\varrho(F_{|V'})-\varrho(\mathrm{id}_V)= \iota_*(Bott^{-1}(\mathcal{M}^{-1}[p_\gamma]))\neq 0$. 

\end{itemize}

The construction of $(W, f\colon W\to V, g)\in R^{spin}_{2n}(Mon(V,\mathcal{F}))$ or $F\colon W\to V\times[0,1]$ such that the index of the associated Dirac and signature operators have index equal to $Bott^{-1}(\mathcal{M}^{-1}[p_\gamma])$ and trying to do it for more general foliations constitute an open problem that the author will tackle in a future work.

We discussed here this question in order to show one of the many geometric situations that could be investigated with the methods presented in the this work.

\begin{appendices}

\section{ Classifying spaces and 1-cocycles}
\label{app3}
In this appendix we are going to recall some basic construction from \cite{Haefliger,HilSk2}.
Let $G\rightrightarrows X$ be a topological groupoid. 

\begin{definition}
	Let $Y$ a topological space and $\{U_i\}_{i\in I}$ an open cover of $Y$.

		A 1-cocycle with values in $G$, defined on the cover $\{U_i\}_{i\in I}$ is the data  of a continuous application \[\lambda_{ij}\colon U_i\cap U_j\to G \] for any pair $(i,j)$,
		in such a way that, if $y\in U_i\cap U_j\cap U_k$, then
		$\lambda_{ij}(y)$ is composable with $\lambda_{jk}(y)$ and
		\[\lambda_{ik}(y)=\lambda_{ij}(y)\lambda_{jk}(y).\]
	We will say that two 1-cocycles $(\{U_i\}_{i\in I},\lambda_{ij})$ and $(\{U'_k\}_{k\in K},\lambda_{kj})$ are cohomologous if, for each $i\in I$ and $k\in K$, there are continuous maps 
	$\mu_{ik}\colon U_i\cap U'_k\to G$ such that
	\[
	\mu_{ik}(x)\lambda'_{kl}(x)=\mu_{il}(x)\quad \mbox{and}\quad \lambda_{ji}(y)\mu_{ik}(y)=\mu_{il}(y)
	\]
	for $x\in U_i\cap U'_k\cap U'_l$ and $y\in U_i\cap U_k\cap U'_l$.
	Let $H^1(Y,G)$ denote the set of the cohomology classes of $G$-valued 1-cocycles.
\end{definition}

If $\varphi\colon Y'\to Y$ is a continuous map, then we have a natural map $\varphi^*\colon H^1(Y,G)\to H^1(Y',G)$ that associates to a 1-cocycle $(\lambda_{ij},\{U_i\}_{i\in I})$
the pull-back $(\lambda_{ij}\circ \varphi,\{\varphi^{-1}(U_i)\}_{i\in I})$.

\begin{remark}
	If $(\lambda_{ij},\{U_i\}_{i\in I})$
	is a $G$-valued 1-cocycle on $Y$, then $\lambda_{ii}$ takes values in $G^{(0)}=X $ for any $i\in I$ and $\lambda_{ij}(x)=\lambda_{ji}^{-1}(x)$ for any $i,j\in I$ and $x\in U_i\cap U_j$.
\end{remark}

Let $(\lambda_{ij},\{U_i\}_{i\in I})$
be a $G$-valued 1-cocycle on $Y$, then one can canonically construct a groupoid $G_\lambda^\lambda$ over $Y$ in the following way:
\begin{itemize}
	\item take the disjoint union
	$\bigsqcup_i U_i$ of all the open sets of the cover;
	\item consider the map $\Lambda\colon \bigsqcup_i U_i\to X$ given by $\lambda_{ii}$ on each $U_i$;
	\item build the pull-back groupoid $G^{\Lambda}_{\Lambda}=\bigsqcup_{i,j} U_i\times_X G\times_X U_j $;
	\item finally define $G_\lambda^\lambda$ as the quotient of $G^\Lambda_\Lambda$ by the following equivalence relation:
	$$(y_i,\gamma, y_j)\sim (y_k,\gamma',y_h)$$ with $(y_i,\gamma, y_j)\in U_i\times_X G\times_X U_j$ and $(y_k,\gamma', y_h)\in U_k\times_X G\times_X U_h$, if $y_i=y_k\in U_i\cap U_k$, $y_j=y_h\in U_j\cap U_h$ and 
	$\gamma'=\lambda_{ki}(y_i)\gamma\lambda_{jh}(y_j)$.
	
\end{itemize}
The isomorphism class of the groupoid $G^\lambda_{\lambda}\rightrightarrows Y$ depends only on the cohomology class of $\lambda$.
Notice that the groupoid $G\rightrightarrows X$ itself is associated to the cocycle $\lambda\in H^1(X,G)$ given by the identity on $X$.

In the literature there are many equivalent definition of the classifying space $BG$ of $G$.
Here we will take as definition the following proposition.

\begin{proposition}\cite[3.1.1]{Haefliger}
	There exists a unique space $BG$ up to homotopy, equipped with a universal 1-cocycle $\xi\in H^1(BG,G)$ such that for any 1-cocycle $\lambda\in H^1(Y,G)$ on a paracompact topological space $Y$, there exists a unique function $f\colon Y\to G$ up to  homotopy such that
	\[
	\lambda=f^*\xi\in H^1(Y,G).
	\]
\end{proposition}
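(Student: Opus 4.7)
The strategy is the classical Haefliger construction, adapted to the groupoid setting as in \cite{Haefliger}, followed by a universal--property argument. Note first that the statement contains a (mild) abuse of notation: the classifying map takes values in $BG$, not in $G$, and in what follows I read $f\colon Y\to BG$.

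\textbf{Existence.} I would build $BG$ as the geometric realization of the simplicial space $NG$ whose space of $n$-simplices is $G^{(n)}=\{(\gamma_1,\dots,\gamma_n)\,|\,s(\gamma_i)=r(\gamma_{i+1})\}$, with the usual face and degeneracy maps (composition, deletion of an outer arrow, and insertion of a unit). The realization $BG:=|NG|$ comes with a canonical open cover $\{V_i\}_{i\in I}$ indexed by the vertices of the simplicial skeleton: over each $V_i$ there is a natural map to $G^{(0)}=X$ (``read off the base point on the boundary'') and over each double intersection $V_i\cap V_j$ a natural map to $G$ obtained from the edges of a simplex. The simplicial identities translate exactly into the 1-cocycle relation $\xi_{ik}=\xi_{ij}\xi_{jk}$, producing the universal cocycle $\xi\in H^1(BG,G)$.

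\textbf{Universality (classifying map).} Given $\lambda=(\lambda_{ij},\{U_i\})\in H^1(Y,G)$, I want to produce $f\colon Y\to BG$ with $f^{*}\xi=\lambda$. Choose a partition of unity $\{\phi_i\}$ subordinated to $\{U_i\}$. On each point $y\in Y$ only finitely many $\phi_i(y)$ are nonzero; the data $\bigl(\lambda_{ii}(y),\phi_i(y)\bigr)_i$, together with the transition arrows $\lambda_{ij}(y)$, determine a simplex in $NG$ and hence a point $f(y)\in BG$. Continuity is a direct check from the construction, and by construction $f^{*}\xi$ is cohomologous to $\lambda$ on the common refinement of $\{U_i\}$ and $\{f^{-1}(V_j)\}$. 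The heart of this step is verifying that a different choice of partition of unity gives a conjugate cocycle, i.e.\ an equal cohomology class; this is where the $0$-cocycle ambiguity enters.

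\textbf{Uniqueness up to 0-cocycles.} Suppose $f_0,f_1\colon Y\to BG$ satisfy $f_0^{*}\xi=f_1^{*}\xi$ in $H^1(Y,G)$. The main step is to produce a continuous path $f_t$ interpolating them and then to extract a $0$-cocycle $\mu\colon Y\to G$ implementing the conjugation. One constructs $f_t$ by using the convex structure of the realization $|NG|$ (a linear combination of the two partitions of unity), observing that the cocycle $f_t^{*}\xi$ interpolates linearly, and that the witnessing cohomology between $f_0^{*}\xi$ and $f_1^{*}\xi$ provides precisely the required $\mu$ at each $t$.

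\textbf{Uniqueness of $BG$.} If $(B,\xi)$ and $(B',\xi')$ both enjoy the universal property, then the classifying maps $B\to B'$ for $\xi$ and $B'\to B$ for $\xi'$ are mutually inverse up to homotopy by applying the uniqueness part to their compositions (which classify $\xi$ and $\xi'$ respectively, just as the identity does).

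\textbf{Expected obstacle.} The only nontrivial point is the uniqueness up to conjugation by a $0$-cocycle: one must genuinely use the simplicial structure of $BG$ to convert the abstract cohomology between $f_0^{*}\xi$ and $f_1^{*}\xi$ into a continuous function $\mu\colon Y\to G$, as opposed to merely a germ of one on each $U_i$. Everything else is a bookkeeping exercise with simplicial realizations and partitions of unity.
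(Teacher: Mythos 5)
First, a point of reference: the paper does not prove this proposition at all. It appears in Appendix A, which the author explicitly frames as recalling material from Haefliger and Hilsum--Skandalis, and the text immediately preceding it says the proposition is being \emph{taken as the definition} of $BG$. So there is no in-paper argument to compare yours against; your plan has to stand on its own against the cited literature. Your overall route --- realize the nerve $NG$, read off the universal cocycle from the barycentric-coordinate cover, classify an arbitrary cocycle by a partition of unity, and deduce uniqueness of $(BG,\xi)$ from the universal property --- is the standard one and is consistent with what Haefliger actually does. You are also right that $f\colon Y\to G$ in the statement is a typo for $f\colon Y\to BG$, and the Remark following the proposition (where $f=\sum_i\alpha_i\lambda_{ii}$) confirms your partition-of-unity reading of the existence step. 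One silent hypothesis you should surface: the partition of unity requires the cover $\{U_i\}$ to be numerable (or $Y$ paracompact); the proposition as stated for an arbitrary topological space is false without some such restriction, and the classical theorem is stated for numerable cocycles.

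The genuine gap is in your uniqueness step, which is both logically inverted and technically unsupported. Logically: if $f_0^{*}\xi=f_1^{*}\xi$ as elements of $H^1(Y,G)$, then a cohomology $\mu$ between the two pulled-back cocycles exists \emph{by hypothesis} --- that is what equality in $H^1$ means --- so there is nothing to ``extract''; the actual content of uniqueness is the converse implication, namely that the existence of such a $\mu$ forces $f_0\simeq f_1$ (uniqueness of $f$ up to homotopy, which is also what makes $BG$ itself unique up to homotopy in your final step). Technically: your proposed interpolation ``by a linear combination of the two partitions of unity'' only makes sense when $f_0$ and $f_1$ are both given in the explicit partition-of-unity form over a common refinement; for arbitrary continuous maps into $|NG|$ with cohomologous pullbacks there is no such convex combination. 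The standard repair is to use $\mu$ as input: glue $f_0^{*}\xi$ and $f_1^{*}\xi$ along $\mu$ into a single $G$-valued $1$-cocycle on $Y\times[0,1]$, classify that cocycle by a map $Y\times[0,1]\to BG$, and observe that its restrictions to the two ends are (conjugate to, hence homotopic to) $f_0$ and $f_1$. Without this, the uniqueness half of the proposition --- and therefore the uniqueness of $BG$ up to homotopy, which rests on it --- is not established.
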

Let $\mathcal{BG}\rightrightarrows BG$ be the groupoid associated to the 1-cocycle $\xi\in H^1(BG,G)$.
One can easily check, by functoriality of the construction, that for any $f\colon Y\to BG$ the groupoid 
$G^{f^*\xi}_{f^*\xi}\rightrightarrows Y$ is isomorphic to $\mathcal{BG}_f^f\rightrightarrows Y$, the pull-back of $\mathcal{BG}$ along $f$.

\begin{remark}
	Let $\lambda\in H^1(Y,G)$ be represented by  $(\lambda_{ij},\{U_i\}_{i\in I})$ and let $\{\alpha_i\}_{i\in I}$ a partition of the unity associated to a locally finite cover $\{U_i\}_{i\in I}$. 
	Then the function $f\colon Y\to BG$ such that $G_\lambda^\lambda\cong \mathcal{BG}_f^f$ is given by 
	\[
	\sum_{i\in I}\alpha_i\lambda_{ii}\colon Y\to BG.
	\]
\end{remark}

Now let us assume that $G$ is a Lie groupoid.
A smooth 1-cocycle $(\lambda_{ij},\{U_i\}_{i\in I})$ is defined to be transverse if $\lambda_{ii}$ is transverse for any $i\in I$.
It is clear that if $\lambda$ is a smooth and transverse 1-cocycle, then 
$G^\lambda_\lambda$ is a Lie groupoid.
In particular if $BG$ is a smooth manifold, then the function $f\colon Y\to BG $ can be chosen to be a smooth transition function tranverse with respect to $\mathcal{BG}$.

\end{appendices}
\addcontentsline{toc}{section}{References}
\bibliographystyle{plain}
\nocite{*}
\bibliography{biblip}

\begin{thebibliography}{10}

\bibitem{AS1}
M.~F. Atiyah and I.~M. Singer.
\newblock The index of elliptic operators. {I}.
\newblock {\em Ann. of Math. (2)}, 87:484--530, 1968.

\bibitem{BJ}
Saad Baaj and Pierre Julg.
\newblock Th\'eorie bivariante de {K}asparov et op\'erateurs non born\'es dans
  les {$C^{\ast} $}-modules hilbertiens.
\newblock {\em C. R. Acad. Sci. Paris S\'er. I Math.}, 296(21):875--878, 1983.

\bibitem{benameur-roy-ell2}
M.~Benameur and I.~Roy.
\newblock The {H}igson-{R}oe exact sequence and $\ell^2$ eta invariants.
\newblock {\em J. Funct. Anal.}, 4(268):974--1031, 2015.

\bibitem{benameur-piazza}
Moulay-Tahar Benameur and Paolo Piazza.
\newblock Index, eta and rho invariants on foliated bundles.
\newblock {\em Ast\'erisque}, (327):201--287 (2010), 2009.

\bibitem{BR}
Moulay-Tahar Benameur and Indrava Roy.
\newblock Leafwise homotopies and {H}ilbert-{P}oincar\'e complexes {I}.
  {R}egular {HP} complexes and leafwise pull-back maps.
\newblock {\em J. Noncommut. Geom.}, 8(3):789--836, 2014.

\bibitem{CS}
A.~Connes and G.~Skandalis.
\newblock The longitudinal index theorem for foliations.
\newblock {\em Publ. Res. Inst. Math. Sci.}, 20(6):1139--1183, 1984.

\bibitem{CunSk}
J.~Cuntz and G.~Skandalis.
\newblock Mapping cones and exact sequences in {$KK$}-theory.
\newblock {\em J. Operator Theory}, 15(1):163--180, 1986.

\bibitem{C87}
Joachim Cuntz.
\newblock A new look at {$KK$}-theory.
\newblock {\em $K$-Theory}, 1(1):31--51, 1987.

\bibitem{DL}
Claire Debord and Jean-Marie Lescure.
\newblock Index theory and groupoids.
\newblock In {\em Geometric and topological methods for quantum field theory},
  pages 86--158. Cambridge Univ. Press, Cambridge, 2010.

\bibitem{DS}
Claire Debord and Georges Skandalis.
\newblock Adiabatic groupoid, crossed product by {$\Bbb{R}_+^\ast$} and
  pseudodifferential calculus.
\newblock {\em Adv. Math.}, 257:66--91, 2014.

\bibitem{D-Sk-stability}
Claire Debord and Georges Skandalis.
\newblock Stability of {L}ie groupoid {$C^*$}-algebras.
\newblock {\em J. Geom. Phys.}, 105:66--74, 2016.

\bibitem{DG2}
Robin~J. Deeley and Magnus Goffeng.
\newblock Realizing the analytic surgery group of {H}igson and {R}oe
  geometrically part {II}: relative {$\eta$}-invariants.
\newblock {\em Math. Ann.}, 366(3-4):1319--1363, 2016.

\bibitem{DG3}
Robin~J. Deeley and Magnus Goffeng.
\newblock Realizing the analytic surgery group of {H}igson and {R}oe
  geometrically part {III}: higher invariants.
\newblock {\em Math. Ann.}, 366(3-4):1513--1559, 2016.

\bibitem{DG1}
Robin~J. Deeley and Magnus Goffeng.
\newblock Realizing the analytic surgery group of {H}igson and {R}oe
  geometrically, part {I}: the geometric model.
\newblock {\em J. Homotopy Relat. Struct.}, 12(1):109--142, 2017.

\bibitem{Haefliger}
Andr{\'e} Haefliger.
\newblock Groupo\"\i des d'holonomie et classifiants.
\newblock {\em Ast\'erisque}, (116):70--97, 1984.
\newblock Transversal structure of foliations (Toulouse, 1982).

\bibitem{HLS}
N.~Higson, V.~Lafforgue, and G.~Skandalis.
\newblock Counterexamples to the {B}aum-{C}onnes conjecture.
\newblock {\em Geom. Funct. Anal.}, 12(2):330--354, 2002.

\bibitem{HRk}
Nigel Higson and John Roe.
\newblock {\em Analytic {$K$}-homology}.
\newblock Oxford Mathematical Monographs. Oxford University Press, Oxford,
  2000.
\newblock Oxford Science Publications.

\bibitem{HigRoeI}
Nigel Higson and John Roe.
\newblock Mapping surgery to analysis. {I}. {A}nalytic signatures.
\newblock {\em $K$-Theory}, 33(4):277--299, 2005.

\bibitem{HigRoeII}
Nigel Higson and John Roe.
\newblock Mapping surgery to analysis. {II}. {G}eometric signatures.
\newblock {\em $K$-Theory}, 33(4):301--324, 2005.

\bibitem{HigRoeIII}
Nigel Higson and John Roe.
\newblock Mapping surgery to analysis. {III}. {E}xact sequences.
\newblock {\em $K$-Theory}, 33(4):325--346, 2005.

\bibitem{HilBoundary}
Michel Hilsum.
\newblock Hilbert modules of foliated manifolds with boundary.
\newblock In {\em Foliations: geometry and dynamics ({W}arsaw, 2000)}, pages
  315--332. World Sci. Publ., River Edge, NJ, 2002.

\bibitem{HilSk2}
Michel Hilsum and Georges Skandalis.
\newblock Morphismes {$K$}-orient\'es d'espaces de feuilles et fonctorialit\'e
  en th\'eorie de {K}asparov (d'apr\`es une conjecture d'{A}. {C}onnes).
\newblock {\em Ann. Sci. \'Ecole Norm. Sup. (4)}, 20(3):325--390, 1987.

\bibitem{HilSk}
Michel Hilsum and Georges Skandalis.
\newblock Invariance par homotopie de la signature \`a coefficients dans un
  fibr\'e presque plat.
\newblock {\em J. Reine Angew. Math.}, 423:73--99, 1992.

\bibitem{JT}
Kjeld~Knudsen Jensen and Klaus Thomsen.
\newblock {\em Elements of {$KK$}-theory}.
\newblock Mathematics: Theory \& Applications. Birkh\"auser Boston Inc.,
  Boston, MA, 1991.

\bibitem{kasp}
G.~G. Kasparov.
\newblock Equivariant {$KK$}-theory and the {N}ovikov conjecture.
\newblock {\em Invent. Math.}, 91(1):147--201, 1988.

\bibitem{Lance}
E.~C. Lance.
\newblock {\em Hilbert {$C^*$}-modules}, volume 210 of {\em London Mathematical
  Society Lecture Note Series}.
\newblock Cambridge University Press, Cambridge, 1995.
\newblock A toolkit for operator algebraists.

\bibitem{McK}
K.~Mackenzie.
\newblock {\em Lie groupoids and {L}ie algebroids in differential geometry},
  volume 124 of {\em London Mathematical Society Lecture Note Series}.
\newblock Cambridge University Press, Cambridge, 1987.

\bibitem{mack}
Kirill C.~H. Mackenzie.
\newblock {\em General theory of {L}ie groupoids and {L}ie algebroids}, volume
  213 of {\em London Mathematical Society Lecture Note Series}.
\newblock Cambridge University Press, Cambridge, 2005.

\bibitem{MM}
I.~Moerdijk and J.~Mr{\v{c}}un.
\newblock {\em Introduction to foliations and {L}ie groupoids}, volume~91 of
  {\em Cambridge Studies in Advanced Mathematics}.
\newblock Cambridge University Press, Cambridge, 2003.

\bibitem{these-month}
Bertrand Monthubert.
\newblock {T}h\`ese de doctorat : {G}roupo\"ides et calcul
  pseudo-diff\'erentiel sur les vari\'et\'es \`acoins.
\newblock
  https://www.math.univ-toulouse.fr/{\raisebox{-.6ex}{\symbol{"7E}}}monthube/recherche.php.

\bibitem{Mont}
Bertrand Monthubert.
\newblock Groupoids and pseudodifferential calculus on manifolds with corners.
\newblock {\em J. Funct. Anal.}, 199(1):243--286, 2003.

\bibitem{MP}
Bertrand Monthubert and Fran{\c{c}}ois Pierrot.
\newblock Indice analytique et groupo\"\i des de {L}ie.
\newblock {\em C. R. Acad. Sci. Paris S\'er. I Math.}, 325(2):193--198, 1997.

\bibitem{MRW}
Paul~S. Muhly, Jean~N. Renault, and Dana~P. Williams.
\newblock Equivalence and isomorphism for groupoid {$C^\ast$}-algebras.
\newblock {\em J. Operator Theory}, 17(1):3--22, 1987.

\bibitem{NWX}
Victor Nistor, Alan Weinstein, and Ping Xu.
\newblock Pseudodifferential operators on differential groupoids.
\newblock {\em Pacific J. Math.}, 189(1):117--152, 1999.

\bibitem{PPT}
M.~J. Pflaum, H.~Posthuma, and X.~Tang.
\newblock The index of geometric operators on {L}ie groupoids.
\newblock {\em Indag. Math. (N.S.)}, 25(5):1135--1153, 2014.

\bibitem{PS}
Paolo Piazza and Thomas Schick.
\newblock Rho-classes, index theory and {S}tolz' positive scalar curvature
  sequence.
\newblock {\em J. Topol.}, 7(4):965--1004, 2014.

\bibitem{PS2}
Paolo Piazza and Thomas Schick.
\newblock The surgery exact sequence, {K}-theory and the signature operator.
\newblock {\em Ann. K-Theory}, 1(2):109--154, 2016.

\bibitem{PZ}
Paolo Piazza and Vito~Felice Zenobi.
\newblock Singular spaces, goupoids and metrics of positive scalar curvature.
\newblock arXiv:1803.02697.

\bibitem{Renault}
Jean Renault.
\newblock {\em A groupoid approach to {$C^{\ast} $}-algebras}, volume 793 of
  {\em Lecture Notes in Mathematics}.
\newblock Springer, Berlin, 1980.

\bibitem{CLM}
P.~Carrillo Rouse, J.~M. Lescure, and B.~Monthubert.
\newblock A cohomological formula for the {A}tiyah--{P}atodi--{S}inger index on
  manifolds with boundary.
\newblock {\em J. Topol. Anal.}, 6(1):27--74, 2014.

\bibitem{Schick}
Thomas Schick.
\newblock The topology of positive scalar curvature.
\newblock In {\em Proceedings of the {I}nternational {C}ongress of
  {M}athematicians---{S}eoul 2014. {V}ol. {II}}, pages 1285--1307. Kyung Moon
  Sa, Seoul, 2014.

\bibitem{SkExt}
Georges Skandalis.
\newblock On the strong {E}xt bifunctor.
\newblock
  http://webusers.imj-prg.fr/{\raisebox{-.6ex}{\symbol{"7E}}}georges.skandalis/Publications/StrongExt.pdf.

\bibitem{SkExact}
Georges Skandalis.
\newblock Exact sequences for the {K}asparov groups of graded algebras.
\newblock {\em Canad. J. Math.}, 37(2):193--216, 1985.

\bibitem{SkPoly}
Georges Skandalis.
\newblock C*-alg\`ebres, {A}lg\'ebres de von {N}eumann, {E}xemples.
\newblock {\em
  https://webusers.imj-prg.fr/{\raisebox{-.6ex}{\symbol{"7E}}}georges.skandalis/poly2015.pdf},
  2015.

\bibitem{Stolz}
Stephan Stolz.
\newblock Positive scalar curvature metrics---existence and classification
  questions.
\newblock In {\em Proceedings of the {I}nternational {C}ongress of
  {M}athematicians, {V}ol.\ 1, 2 ({Z}\"urich, 1994)}, pages 625--636.
  Birkh\"auser, Basel, 1995.

\bibitem{JLT1}
Jean~Louis Tu.
\newblock La conjecture de {N}ovikov pour les feuilletages hyperboliques.
\newblock {\em $K$-Theory}, 16(2):129--184, 1999.

\bibitem{JLT2}
Jean-Louis Tu.
\newblock The gamma element for groups which admit a uniform embedding into
  {H}ilbert space.
\newblock In {\em Recent advances in operator theory, operator algebras, and
  their applications}, volume 153 of {\em Oper. Theory Adv. Appl.}, pages
  271--286. Birkh\"auser, Basel, 2005.

\bibitem{vas}
St{\'e}phane Vassout.
\newblock Unbounded pseudodifferential calculus on {L}ie groupoids.
\newblock {\em J. Funct. Anal.}, 236(1):161--200, 2006.

\bibitem{Wprod}
Charlotte Wahl.
\newblock Product formula for {A}tiyah-{P}atodi-{S}inger index classes and
  higher signatures.
\newblock {\em J. K-Theory}, 6(2):285--337, 2010.

\bibitem{Wahl}
Charlotte Wahl.
\newblock Higher {$\rho$}-invariants and the surgery structure set.
\newblock {\em J. Topol.}, 6(1):154--192, 2013.

\bibitem{WXY}
Shmuel Weinberger, Zhizhang Xie, and Guoliang Yu.
\newblock Additivity of higher rho invariants and nonrigidity of topological
  manifolds.
\newblock Preprint 2016. arXiv 1608.03661.

\bibitem{WY}
Shmuel Weinberger and Guoliang Yu.
\newblock Finite part of operator {$K$}-theory for groups finitely embeddable
  into {H}ilbert space and the degree of nonrigidity of manifolds.
\newblock {\em Geom. Topol.}, 19(5):2767--2799, 2015.

\bibitem{XY}
Zhizhang Xie and Guoliang Yu.
\newblock Positive scalar curvature, higher rho invariants and localization
  algebras.
\newblock {\em Adv. Math.}, 262:823--866, 2014.

\bibitem{XY2}
Zhizhang Xie and Guoliang Yu.
\newblock Higher rho invariants and the moduli space of positive scalar
  curvature metrics.
\newblock {\em Adv. Math.}, 307:1046--1069, 2017.

\bibitem{zeidler}
Rudolf Zeidler.
\newblock Positive scalar curvature and product formulas for secondary index
  invariants.
\newblock {\em J. Topol.}, 9(3):687--724, 2016.

\bibitem{zenobi}
Vito~Felice Zenobi.
\newblock Mapping the surgery exact sequence for topological manifolds to
  analysis.
\newblock {\em J. Topol. Anal.}, 9(2):329--361, 2017.

\end{thebibliography}

\end{document}